\newcommand{\bk}{\boldsymbol{k}}
\newcommand{\bu}{\boldsymbol{u}}
\newcommand{\bx}{\boldsymbol{x}}
\newtheorem{theorem}{Theorem}
\newtheorem{assumption}{Assumption}
\newtheorem{remark}{Remark}
\newtheorem{definition}{Definition}
\newtheorem{corollary}{Corollary}
\newtheorem{lemma}{Lemma}
\def\tsc#1{\csdef{#1}{\textsc{\lowercase{#1}}\xspace}}
\begin{document}
\let\WriteBookmarks\relax
\def\floatpagepagefraction{1}
\def\textpagefraction{.001}
\shorttitle{Moving Sampling Physics-informed Neural Networks}
\shortauthors{Yu Yang et~al.}

\title [mode = title]{Moving Sampling Physics-informed Neural Networks induced by Moving Mesh PDE}                      
\tnotemark[1]

\tnotetext[1]{This research is partially sponsored by the National key R \& D Program of China (No.2022YFE03040002) and the National Natural Science Foundation of China (No.11971020, No.12371434). }


        \author[1]{Yu Yang}
        [type=editor,
        auid=000,bioid=1,
        orcid=0009-0005-1428-6745
        ]
        \ead{yangyu1@stu.scu.edu.cn}
        
        
        \address[1]{School of Mathematics, Sichuan University, 610065, Chengdu, China.}

                                
        
        
        


        \author[1]{Qihong Yang}[style=chinese]
                                
        
        \ead{yangqh0808@163.com}

        \author[1]{Yangtao Deng}[style=chinese]
                                
        
        \ead{ytdeng1998@foxmail.com}

        \author[1]{Qiaolin He}[style=chinese]
                                
        \cormark[1] 
        
        \ead{qlhejenny@scu.edu.cn}
        
        

	\cortext[cor1]{Corresponding author}

\begin{abstract}
In this work, we propose an end-to-end adaptive sampling framework based on deep neural networks and the moving mesh method  (MMPDE-Net), which can adaptively generate new sampling points by solving the moving mesh PDE. This model focuses on improving the quality of sampling points generation. Moreover, we develop an iterative algorithm based on MMPDE-Net, which makes sampling points distribute more precisely and controllably. Since MMPDE-Net is independent of the deep learning solver, we combine it with physics-informed neural networks (PINN) to propose moving sampling PINN (MS-PINN) and show the error estimate of our method under some assumptions. Finally, we demonstrate the performance improvement of MS-PINN compared to PINN through numerical experiments of four typical examples, which numerically verify the effectiveness of our method.

\end{abstract}



\begin{keywords}
deep learning \sep neural networks \sep moving mesh \sep partial differential equation 
\sep sampling.
\end{keywords}

\maketitle


 	\section{Introduction}
	\label{sec:intro}

    With the rapid increase in computational resources, solving partial differential equations (PDEs) \cite{renardy2006introduction} using deep learning  has been an emerging point \cite{weinan2021dawning}. 
    Currently, many researchers have proposed widely used deep learning solvers based on deep neural networks, such as the Deep Ritz method \cite{DeepRitz}, which solve the variational problems arising from PDEs; the Deep BSDE model \cite{DeepBSDE}, which is developed from stochastic differential equations and performs well at solving high-dimensional problems, and the DeepONet framework \cite{DeepONet}, which is used to learn operators accurately and efficiently from a relatively small dataset. In this article, we use physics-informed neural networks (PINN) \cite{PINN}. In PINN, the governing equations of PDEs, boundary conditions, and related physical constraints are incorporated into the design of the loss function, and an optimization algorithm is used to find the network parameters to minimize the loss function, so that the approximated solution output by the neural networks satisfies the governing equations and constraints.

    In fact, deep learning solvers need to construct and optimize the loss function. When constructing the loss function, a common choice is the loss function under the $L_{2}$ norm. However, in code-level implementations, it is difficult to compute the loss function directly in integral form, so it is usual to discretize the loss function by sampling uniformly distributed points as empirical loss, commonly known as the mean square error loss function. 
    The minimization of the discrete loss function at the end of the optimization directly affects the accuracy of the approximate solution. 
    Then, how to choose the sampling points to write the discrete loss function becomes an important problem. There are several uniform sampling methods \cite{wu2023comprehensive}, which include 1) equispaced uniform point sampling, which samples uniformly from the equispaced uniformly distributed points; 2) uniformly random sampling, which randomly samples the points according to a continuous uniform distribution in the computational domain; 3) Latin hypercube sampling (\cite{stein1987large},\cite{mckay2000comparison}); and 4) Sobol sequence (\cite{sobol1967distribution},\cite{pang2019fpinns}), that is one type of quasi-random low-discrepancy sequence. 

    There are also many works about non-uniform adaptive sampling methods. Lu et al. in \cite{lu2021deepxde} proposed a new residual-based adaptive refinement (RAR) method to improve the training efficiency of PINN. That is, training points are adaptively added where the residual loss is large. In \cite{gu2021selectnet}, Gu et al. introduced a neural network that assigns higher weights for sampling with larger residuals, which can lead to more accurate predicted solution if the neural network is chosen properly.
   In \cite{wu2023comprehensive}, the residual-based adaptive distribution (RAD) method is proposed, and its main idea is to construct a probability density function based on residual information and to adopt adaptive sampling according to this density function.
    In \cite{nabian2021efficient}, collocation points are sampled according to a distribution proportional to the loss function in each training iteration. \textcolor{black}{In \cite{daw2022rethinking}, an evolutionary sampling algorithm is used to optimize the sampling points using the mean of the residuals as the criterion.} In \cite{das-pinns}, Tang et al. proposed an adaptive sampling framework for solving PDEs from the perspective of the variance of the residual loss term via KRNet (\cite{tang2020deep},\cite{wan2020vae},\cite{wan2021augmented}). All of these adaptive sampling methods above are from the perspective of the loss function, and we would like to focus more on the performance of the solution function. This reminds us of the mesh movement method in traditional numerical methods.
    
    One obvious characteristic of the moving mesh method is that only the mesh nodes are relocated without changing the mesh topology \cite{huang2010adaptive}, i.e., the number of points of the mesh is fixed before and after the mesh is redistributed. In this paper, we focus on the moving mesh PDE (MMPDE) method \cite{huang1998moving}. The purpose of the MMPDE method is to provide a system of partial differential equations to control the movement of the mesh nodes. By adjusting the monitor function in MMPDE, the mesh can be made centralized or dispersed which depending on the performance of the solution. Compared with the uniform mesh, the number of nodes in the moved mesh is greatly reduced with the same numerical solution accuracy, thus the efficiency is improved greatly \cite{he2009numerical}. In this work, based on the  MMPDE method, we propose the adaptive sampling neural network, MMPDE-Net, which is independent of the deep learning solver. It is essentially a coordinate transformation mapping, which enables the coordinates of the output points to be concentrated in the region where the function has large variations. In \cite{ren-wang}, Ren and Wang proposed an iterative method to generate the mesh. Inspired by this approach, we also develop an iterative algorithm on MMPDE-Net to make the distribution of the sampling points more precise and controllable.

    Since MMPDE-Net is essentially a coordinate transformer independent of solving the physical problem, it can be combined with many deep learning solvers. \textcolor{black}{In this work, we choose the most widely used Physics-informed Neural Networks (PINN).} We propose a deep learning framework, whose main idea is to use the preliminary solution provided by PINN to help MMDPE-Net develop the monitor function, and then accelerate the convergence of the loss function of PINN through the adaptive sampling points obtained by MMPDE-Net. \textcolor{black}{To emphasize the role of MMPDE-Net based on the moving grid method for the redistribution of sampled points, we name this deep learning framework as Moving Sampling PINN (MS-PINN).} Meanwhile,  with some reasonable assumptions we prove that the error bounds of the approximation solution and the true solution are lower than that of the traditional PINN under a certain probability. Finally, we verify the effectiveness of MS-PINN using the numerical experiments.
    
    In summary, our main contributions in this article are:
		\begin{itemize}
			\item A deep adaptive sampling framework (MMPDE-Net) that adaptively generates new sample points by solving the Moving Mesh PDE is proposed.
			\item A deep learning framework (MS-PINN) is presented. The core idea is to improve the approximate solution of PINN by adaptive sampling through MMPDE-Net.
			\item An error estimate of the proposed MS-PINN is given. The effectiveness is demonstrated by reducing the error bounds between the approximate solution and true solution.
		\end{itemize}

    The rest of the paper is organized as follows. In Section 2, the main idea of PINN is briefly introduced and a typical example is used to underline the importance of adaptive sampling. In Section 3, MMPDE-Net  and the iterative algorithm based on MMPDE-Net are illustrated. The MS-PINN framework is presented and  the error estimates are given in Section 4. 
    Numerical experiments are shown in Section 5 to verify the effectiveness of our method. Finally, conclusions are given in Section 6.


	\section{Preliminary Work}
	\label{sec:Preliminary Work}
 \textcolor{black}{
\subsection{Problem formulation}
\label{sec:Problem formulation}
The general form of the problem that we consider is as follows
\begin{equation}\label{eq:gen_PDE}
    \left\{
  \begin{aligned}
 \mathcal{A}[\bu(\bx)] &= f(\bx), \quad \bx \in \Omega, \\
 \mathcal{B}[\bu(\bx)] &= g(\bx), \quad \bx \in \partial\Omega,
  \end{aligned}
    \right.
\end{equation}
where $\Omega \subset \mathbb{R}^d$ is a  bounded, open and connected domain which has a polygonal boundary of $\partial\Omega$, $\mathcal{A}$ and $\mathcal{B}$ are the partial differential operators defined in $\Omega$ and on $\partial \Omega$ respectively, $f(\bx)$ and $g(\bx)$ are the source function and the boundary condition respectively. The solution $\bu(\bx)$ has large variations in subregion of $\Omega$. For time dependent problem, the solution may has singular behaviors with time evolution. The sampling method is very important when we use neural networks to solve this kind of problems.}


 \subsection{ A brief introduction to PINN}
\label{sec:Brief introduction to PINN}



\textcolor{black}{The core idea of PINN is to incorporate the equation itself and its constraints into the loss function. Typically, the network architecture of PINN is a fully connected neural network. Suppose the input of the fully connected neural network is in $d_{in}$ dimension, the output is in $d_{out}$ dimension, and there are $L$ layers and each layer has $d_h$ neurons. Therefore the number of parameters of PINN (including the weights and biases of each layer) is $d_{NN} =(d_{in}*d_h+d_h)+(L-1)*(d_h*d_h+d_h)+(d_h*d_{out}+d_{out})$.}

Suppose the approximate solution output by PINN is $\bu(\bx;\theta)$, $\mathcal{A}[\bu(\bx;\theta)]-f(\bx)\in L^2(\Omega)$ and $\mathcal{B}[\bu(\bx;\theta)]-g(\bx)\in L^2(\partial\Omega)$, the loss function can be written as 

\begin{equation}\label{eq:L2_loss}
    \begin{aligned}
     \mathcal{L}(\bx;\theta) &= \alpha_1\Vert  \mathcal{A}[\bu(\bx;\theta)]-f(\bx) \Vert_{2,\Omega}^2 +  \alpha_2\Vert  \mathcal{B}[\bu(\bx;\theta)]-g(\bx) \Vert_{2,\partial\Omega}^2 \\
     &= \alpha_1\int_{\Omega} |\mathcal{A}[\bu(\bx;\theta)]-f(\bx)|^2 d\bx  + \alpha_2\int_{\partial\Omega} |\mathcal{B}[\bu(\bx;\theta)]-g(\bx)|^2 d\bx \\
     & \triangleq \alpha_1\int_{\Omega} |r(\bx;\theta)|^2 d\bx + \alpha_2\int_{\partial\Omega} |b(\bx;\theta)|^2 d\bx, \\
    \end{aligned}
\end{equation}
\textcolor{black}{where $\theta \in \mathbb{R}^{d_{NN}} $ represents the parameters of the neural networks, usually consisting of weights and biases, and $\mathbb{R}^{d_{NN}}$ is the space determined by the structure of the neural network,} $\alpha_1$ is the weight of the residual loss term and $\alpha_2$ is the weight of the boundary loss term. Typically,  when discretizing the loss function Eq \eqref{eq:L2_loss}, we sample uniformly distributed residual training points $\left\{\bx_i\right\}_{i=1}^{M_r} \subset \Omega$ and boundary training points $\left\{\bx_i\right\}_{i=1}^{M_b}\subset \partial\Omega$, and the empirical loss is written as
\begin{equation}\label{eq:L2_empiricalloss}
     \mathcal{L}_M(\bx;\theta) =   \frac{\alpha_1}{M_{r}} \sum_{i = 1}^{M_{r}} |r(\bx_i;\theta)|^2 +
     \frac{\alpha_2}{M_{b}} \sum_{i = 1}^{M_{b}} |b(\bx_i;\theta)|^2.
\end{equation}

After that, optimization algorithms such as Adam \cite{Adam} and LBFGS \cite{LBFGS} are used to make the value of $\mathcal{L}_M(\bx;\theta)$ in Eq \eqref{eq:L2_empiricalloss} decrease gradually during the training process. When the value of $\mathcal{L}_M(\bx;\theta)$ is small enough, the solution $\bu(\bx;\theta)$ output by the neural networks can be regarded as the approximate solution with a sufficiently small error (\cite{shin2020convergence},\cite{shin2023error}).

 \subsection{A discussion of a certain solution }
\label{sec:An interesting phenomenon}
Consider the following one-dimensional example
\begin{equation}
	\label{eq:1d_Poisson_test}
	\hspace{-0.3cm}
	\begin{array}{r@{}l}
		\left\{
		\begin{aligned}
			& - u_{xx}(\bx) = f(\bx), \quad  \bx \in \Omega, \\
			& u(\bx) = g(\bx), \quad   \bx \in \partial \Omega,\\
		\end{aligned}
		\right.
	\end{array}
\end{equation}
where the computational domain $\Omega = (0,3\pi)$. For a given $f(x)$,  the analytical solution of Eq \eqref{eq:1d_Poisson_test} is 
\begin{equation}
	\label{eq:1d_Poisson_test_soluton}
    u(\bx)=e^{-2(x-4)(x-5)}\sin(kx),
\end{equation}
where $k \in \mathbb{N}$ is a parameter. 
It is well known that the analytical solution $u(\bx)$ vibrates up and down more and more frequently on $[\pi,2\pi]$ as the frequency $k$ becomes larger.

\textcolor{black}{We use PINN to solve this problem and uniformly sample $M_r = 300$ points in $\Omega$ as the residual training set, while uniformly sampling $600$ points as the test set.} For the boundary training points, there are only left and right endpoints $M_b =2$. 
We take 4 hidden layers with 60 neurons per layer and use tanh as the activation function. The Adam optimization method is used and the initial learning rate is 0.0001.
The loss function  is defined  as follows
\textcolor{black}{\begin{equation}\label{eq:Test_Loss_empiricalloss}
     \mathcal{L}oss(\bx;\theta) =   \frac{1}{M_{r}} \sum_{i = 1}^{M_{r}} |-\Delta u(\bx_i;\theta) - f(\bx_i)|^2 +
     \frac{1}{M_{b}} \sum_{i = 1}^{M_{b}} |u(\bx_i;\theta) - g(\bx_i)|^2,
\end{equation}}
where the weights $\alpha_1 =1 $ and $\alpha_2 = 1$. The minimum value of the loss function and the error $e_2(u)$ are given in Table \ref{tab:Test_Loss_vark}, \textcolor{black}{where $e_2(u)$ is defined in Eq \eqref{eq:measure} in Section \ref{sec:Symbols and parameter settings}.} It is clear that the values of the loss function and the error become larger as $k$ increases.


\begin{table}[h]
\scriptsize
\centering
\caption{Variation of the minimum of the loss function and the relative error between the approximate solution and analytical solution for different frequency $k$ with $20000$ training epochs.}
\setlength{\tabcolsep}{3.mm}{
\begin{tabular}{|c|c|c|c|c|}
\hline\noalign{\smallskip}
   k& 2 & 4 & 6 & 8 \\
\hline
Loss  & $1.033 \times 10^{-5}$  &   $8.530 \times 10^{-5}$ & $1.215 \times 10^{-4}$  &   1.244 \\
\hline
$e_2(u)$  & $3.301 \times 10^{-4}$  &   $2.843 \times 10^{-3}$ & $2.284 \times 10^{-3}$  &   $1.201  \times 10^{-1}$\\
\hline
\end{tabular}
}
\label{tab:Test_Loss_vark} 
\end{table}

Since the behavior of the solution varies in different regions, now we let the numbers of sampling points on $(0,\pi]$ and $[2\pi,3\pi)$ are fixed, i.e., $M_{r1}$ and $M_{r3}$ are fixed,  and only the number of sampling points on $[\pi,2\pi]$ is increased, i.e., $M_{r2}$ changes. 
\textcolor{black}{For frequency $k = 16$, the number of sampling points ($M_r = M_{r1} + M_{r2}+M_{r3}$, $M_{r1} = M_{r3} =100$, $M_{r2} =100,150,250,300$) is varied with 10000 and 40000 epochs.
It is easy to observe from Fig \ref{fig:Test_Loss_VarpartM}
that in the region with large vibrations, more sampling points in $[\pi, 2\pi]$ tend to make the loss function decrease fast and help the neural networks to learn the solution with small errors.}

\begin{figure}[htbp]
\centering
\subfloat[epochs N=10000]{\includegraphics[width = 0.33\textwidth]{./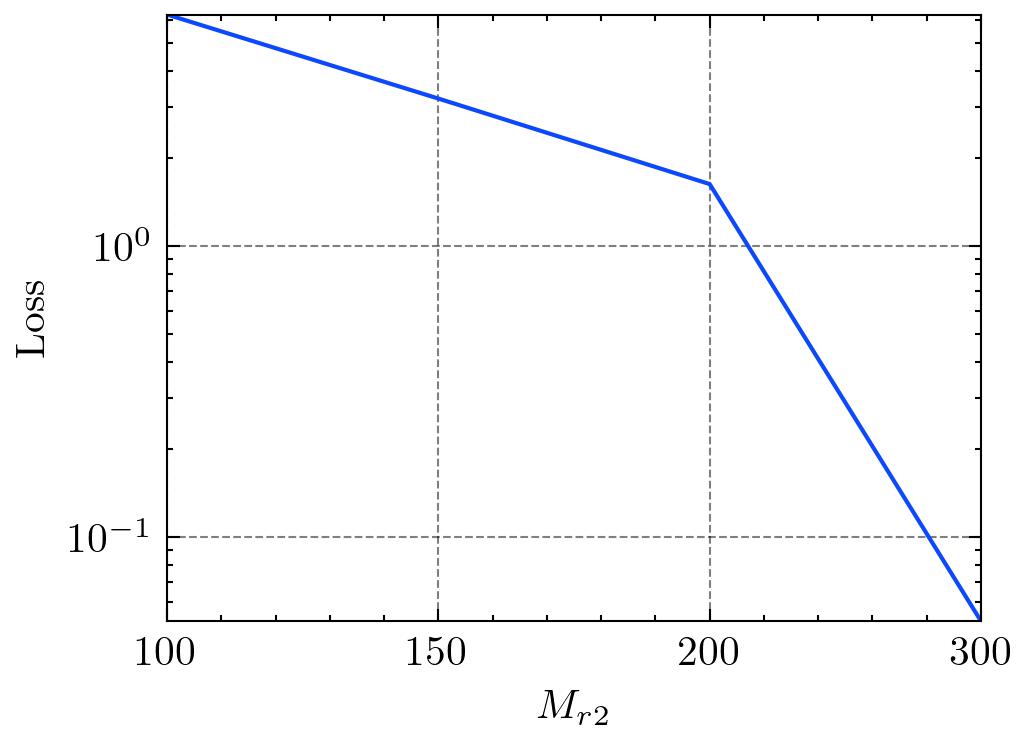}}
\subfloat[epochs N=40000]{\includegraphics[width = 0.33\textwidth]{./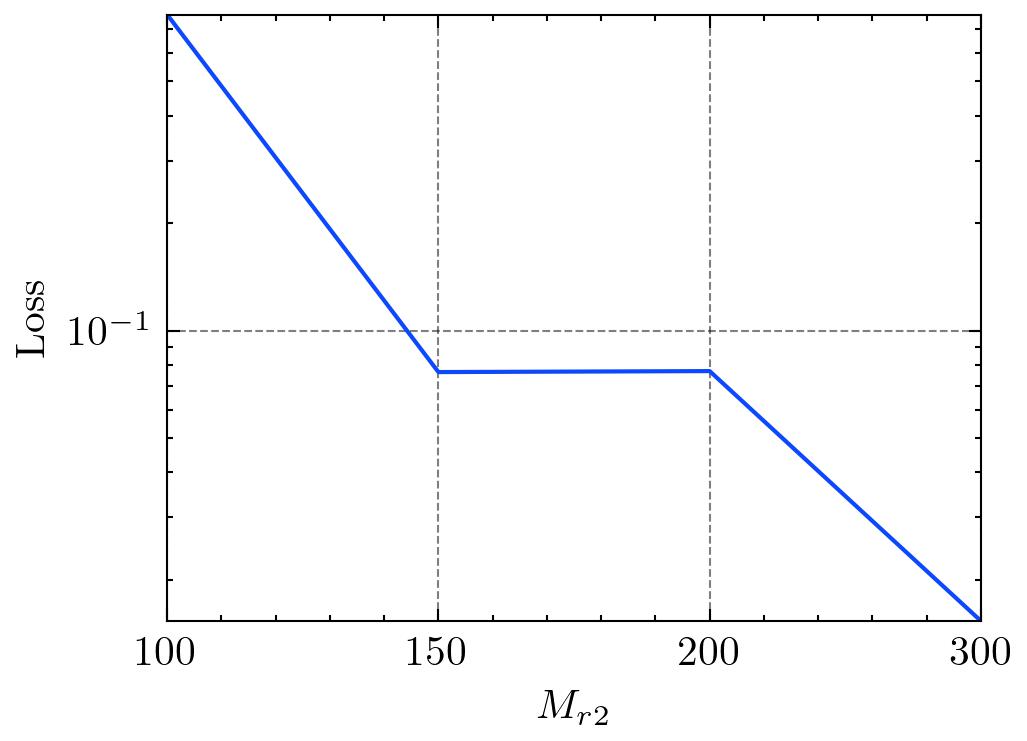}}\\
\subfloat[epochs N=10000]{\includegraphics[width = 0.33\textwidth]{./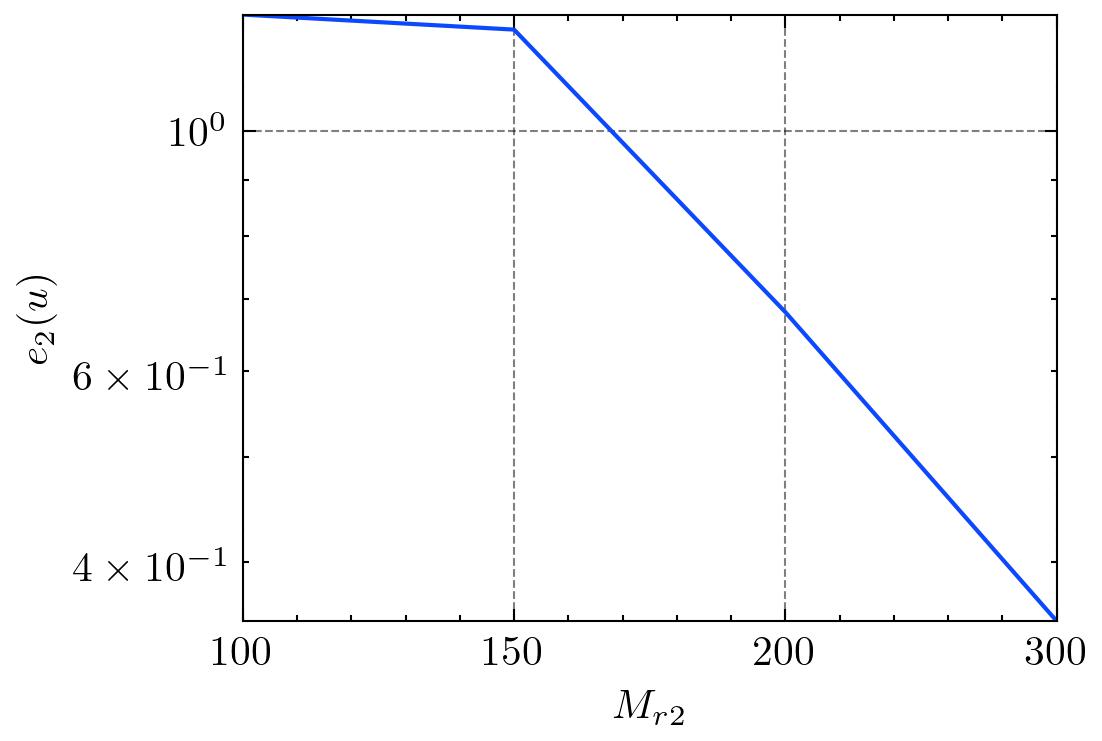}}
\subfloat[epochs N=40000]{\includegraphics[width = 0.33\textwidth]{./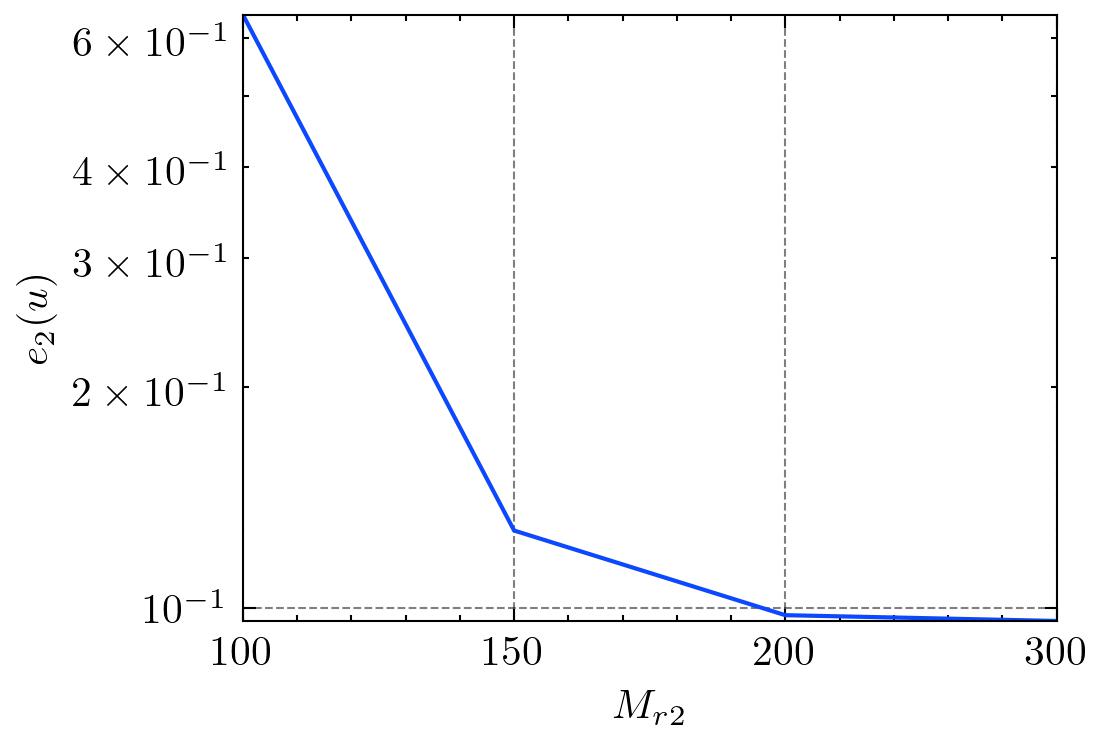}} 
\caption{\textcolor{black}{Variations of the minimum of the loss function and the relative $L_2$ error with different number of sampling points ($M_{r2}$ =100,150,250,300) for $k=16$.}}
\label{fig:Test_Loss_VarpartM}
\end{figure}
In fact, there are similar results in \cite{frequency-sampling}, where 1D sine function of frequency $k$ is learned in time $O\left(\frac{k^2}{\rho}\right)$, and $\rho$ denotes the minimum sampling density in the input space. 
 If we regard the parameter $\theta$ of the neural networks as a random variable, for the empirical loss Eq \eqref{eq:L2_empiricalloss} we define the loss function expectation $\mathbb{E}_\theta \left[\mathcal{L}_M(\bx;\theta)\right]$. 
From the results shown in Fig \ref{fig:Test_Loss_VarpartM}, 
we consider the loss function expectation $\mathbb{E}_\theta \left[\mathcal{L}_M(\bx;\theta)\right]$ positively related to the function frequency $k$ and negatively related to the number of sampling points. That is, when the frequency of the learning function is fixed, the more sampling points, the smaller $\mathbb{E}_\theta \left[\mathcal{L}_M(\bx;\theta)\right]$ is, regardless of the location of the sampling points.

In the following, we use $k$ to denote the complexity (e.g. frequency) of the solution. In order to further study the effect of the complexity of the solution and the number of sampling points on the loss function, we rewrite the residual loss expectation in Definition \ref{de:F}.
\begin{definition}\label{de:F}
    For  $\forall M_r, M \in \mathbb{N^+} ,M \leq M_r$ and $\forall k>0$, the residual loss expectation in a certain subregion(region) is defined as
    \begin{equation}
    \mathcal{F}(k,M,M_r) =\mathbb{E}_\theta \left[\frac{1}{M_{r}} \sum_{i = 1}^{M} |r(x_i;\theta|^2 \right],
    \end{equation}
    where $\mathbb{N^+}$ denotes the set of positive integers.
\end{definition}

\begin{remark}
\label{rem:mk_F}
In different subregion $\Omega_i$, where $\Omega =\{\Omega_{i}\}_{i = 1}^{N_k}$, the complexity  $k_i$ of the solution is different and $\bk =\{{k_i}\}_{i=1}^{N_k}$, therefore the number of sampling points ${M_i}$ in different subregion is different. 
We have
    \begin{equation}\label{eq:mk_F}
    \mathcal{F}(k,M_r,M_r) =  \sum_{i = 1}^{N_{k}} \mathcal{F}(k_i,M_i,M_r), \quad \mbox{where} \quad \sum_{i = 1}^{N_{k}} M_i = M_r.
    \end{equation}
\end{remark}



In order to establish the error estimate, we give Assumption \ref{asu:F} following the reference \cite{frequency-sampling}. 
\begin{assumption}
    \label{asu:F}
     $\forall k,\alpha,\beta >0, \forall M \leq M_r \in \mathbb{N^+}$, $\exists$ $p,q >0$, s.t.,
    $$\mathcal{F}(\alpha k,M,M_r) = \alpha^p \mathcal{F}(k,M,M_r),  \quad \mathcal{F}(k,\beta M,M_r) = \frac{1}{\beta^q} \mathcal{F}(k,M,M_r), $$
    which is due to the positively relation of the function complexity $k$ and the negatively relation of the number of sampling points $M$.

\end{assumption}
	\section{Implement of MMPDE-Net}
	\label{sec:Implement of MMPDE-Net}
	
\subsection{MMPDE-Net}
\label{sec:MMPDE-Net}
In this section, our goal is to develop the neural networks that are independent of the problem and used only for adaptive sampling. 
We would like to concentrate the sampling points as much as possible on the regions where the behaviors of the solution are intricate. This is similar to the idea of the moving mesh PDE (MMPDE) in the traditional numerical methods. Firstly, we briefly introduce the MMPDE derivation in two-dimensional case.
Let's 
define a one-to-one coordinate transformation on $\Omega \subset \mathbb{R}^2$
\begin{equation}\label{eq:coordinate transformation}
     \bx = \bx(\boldsymbol{\xi}),  \quad
     \forall \boldsymbol{\xi} = (\xi,\eta) \in \Omega, 
\end{equation}
where $\boldsymbol{\xi} = (\xi,\eta)$ represents the coordinates before transformation and $\bx = (x,y)$ represents the coordinates after transformation. 

The mesh functional in variational approach can usually be expressed in the form
\begin{equation}\label{eq:mesh energy functional0}
    E(\boldsymbol{\xi})=\int_{\Omega}^{} \sum_{i,j,\alpha,\beta} w^{i,j} \frac{\partial \boldsymbol{\xi}^\alpha}{\partial \bx^i} \frac{\partial \boldsymbol{\xi}^\beta}{\partial \bx^j} d\bx,
\end{equation}
where $W = (w_{i,j}), W^{-1} = (w^{i,j})$ are symmetric positive definite matrices that are monitor functions in a matrix form. 
According to Winslow's variable diffusion method \cite{winslow1966numerical},  a special case of Eq \eqref{eq:mesh energy functional0} is defined as follows

\begin{equation}\label{eq:mesh energy functional}
E(\xi,\eta)=\int_{\Omega}^{}\ \frac{1}{w} \left(\vert\nabla \xi\vert^2+\vert\nabla \eta\vert^2\right) dx dy,
\end{equation}
where $\nabla = \left(\frac{\partial}{\partial x},\frac{\partial}{\partial y}\right)^T$, $w$ is a monitor function that depends on the solution. The Euler–Lagrange equation whose solution minimizes Eq \eqref{eq:mesh energy functional} is as follows
\begin{equation} \label{eq:The Euler–Lagrange equation}
    \left\{
        \begin{aligned}
            \nabla \left(\frac{1}{w} \nabla \xi\right) = 0, \\
            \nabla \left(\frac{1}{w} \nabla \eta \right) = 0, 
        \end{aligned}
    \right.
\end{equation}
which can be regarded as a steady state of the heat flow equation Eq \eqref{eq:the heat flow equations}
\begin{equation}\label{eq:the heat flow equations}
\left\{
\begin{aligned}
    \frac{\partial \xi}{\partial t}-\nabla \left(\frac{1}{w} \nabla \xi\right) = 0, \\
    \frac{\partial \eta}{\partial t}-\nabla \left(\frac{1}{w} \nabla \eta \right) = 0. 
\end{aligned}
\right.
\end{equation}
Interchanging the dependent and independent variables in Eq \eqref{eq:the heat flow equations}, we have

\textcolor{black}{\begin{equation}\label{eq:MMPDE_2D}
   \begin{split}
        (x_t,y_t)^T=-\frac{(x_\xi,y_\xi)^T}{J}\left\{\frac{\partial}{\partial \xi}\left((x_\eta,y_\eta) \frac{1}{Jw}(x_\eta,y_\eta)^T\right)-\frac{\partial}{\partial \eta}\left((x_\xi,y_\xi) \frac{1}{Jw}(x_\eta,y_\eta)^T\right)\right\} \\
        -\frac{(x_\eta,y_\eta)^T}{J}\left\{-\frac{\partial}{\partial \xi}\left((x_\eta,y_\eta) \frac{1}{Jw}(x_\xi,y_\xi)^T\right)+\frac{\partial}{\partial \eta}\left((x_\xi,y_\xi) \frac{1}{Jw}(x_\xi,y_\xi)^T\right)\right\},
   \end{split}
\end{equation}
where $J=x_\xi y_\eta -x_\eta y_\xi$.}

The neural networks can be treated as a coordinate transformation,  as in Eq \eqref{eq:coordinate transformation}. An intuitive idea is to solve Eq \eqref{eq:MMPDE_2D} with input $(\xi,\eta)$ and output $(x,y)$.
We denote that
\begin{equation}\label{eq:MMPDE_2D_notation}
    \left\{
    \begin{aligned}
        \mathcal{S}_1(x,y,\xi,\eta,J,w)=&\frac{\partial}{\partial \xi}\left((x_\eta,y_\eta) \frac{1}{Jw}(x_\eta,y_\eta)^T\right)-\frac{\partial}{\partial \eta}\left((x_\xi,y_\xi) \frac{1}{Jw}(x_\eta,y_\eta)^T\right), \\
        \mathcal{S}_2(x,y,\xi,\eta,J,w)=&-\frac{\partial}{\partial \xi}\left((x_\eta,y_\eta) \frac{1}{Jw}(x_\xi,y_\xi)^T\right)+\frac{\partial}{\partial \eta}\left((x_\xi,y_\xi) \frac{1}{Jw}(x_\xi,y_\xi)^T\right).
    \end{aligned}
    \right.
\end{equation}
Using Eq \eqref{eq:MMPDE_2D_notation}, Eq \eqref{eq:MMPDE_2D} is rewritten as

\begin{equation}\label{eq:MMPDE_2D_version2}
    \left\{
    \begin{aligned}
        x_t& + \frac{x_\xi}{J}\mathcal{S}_1(x,y,\xi,\eta,J,w)
        + \frac{x_\eta}{J}\mathcal{S}_2(x,y,\xi,\eta,J,w) = 0 , \quad  (\xi,\eta) \in \Omega, \\
        y_t& + \frac{y_\xi}{J}\mathcal{S}_1(x,y,\xi,\eta,J,w)
        + \frac{y_\eta}{J}\mathcal{S}_2(x,y,\xi,\eta,J,w) = 0 , \quad  (\xi,\eta) \in \Omega,\\
        x &= \xi,  y = \eta, \quad  (\xi,\eta) \in \partial\Omega.\\
    \end{aligned}
    \right.
\end{equation}
Now we establish the neural networks to solve Eq \eqref{eq:MMPDE_2D_version2}. When dealing with time discretization, we use the forward Euler formula. 
There are still two issues to be addressed. One is to define the monitor function $w(\xi,\eta)$, the other is to enforce the boundary conditions.

In traditional numerical methods, the role of the monitor function is to guide the movement of the mesh points so that they are concentrated in the region where the monitor function is large. 
\textcolor{black}{As mentioned in Section \ref{sec:Problem formulation}, the solution $\bu(\bx)$ considered in this paper varies greatly in a certain subregion of $\Omega$,
therefore the monitor function can be defined empirically as the function of $\bu$, the derivatives of $\bu$ and/or the second derivatives of $\bu$ (\cite{huang1994moving},\cite{ren-wang},\cite{he2009numerical}), e.g., Eq \eqref{eq:2d_monitorfunction}. }
When dealing with boundary conditions, instead of adding loss terms of the boundary conditions as in PINN, we utilize the enforcement approach(\cite{YANG202314},\cite{lyu2020enforcing}). The fully connected neural networks is employed as our network architecture. We assume that the sampled training points are $\left\{(\xi_i,\eta_i)\right\}_{i=1}^{M}$, then the loss function of the MMPDE is given by
\begin{equation}\label{eq:MMPDE_loss}
    \begin{aligned}
      & \mathcal{L}oss_{MMPDE}\left((\xi,\eta);\theta\right) \\
      & =  \frac{1}{M} \sum_{i = 1}^{M} \left|\frac{x_i-\xi_i}{\tau} + \frac{x_\xi(\xi_i,\eta_i)}{J}\mathcal{S}_1(x,y,\xi_i,\eta_i,J,w) 
      + \frac{x_\eta(\xi_i,\eta_i)}{J}\mathcal{S}_2(x,y,\xi_i,\eta_i,J,w)\right|^2 \\
     & + \frac{1}{M} \sum_{i = 1}^{M} \left|\frac{y_i-\eta_i}{\tau} + \frac{y_\xi(\xi_i,\eta_i)}{J}\mathcal{S}_1(x,y,\xi_i,\eta_i,J,w)
        + \frac{y_\eta(\xi_i,\eta_i)}{J}\mathcal{S}_2(x,y,\xi_i,\eta_i,J,w)\right|^2.
    \end{aligned}
\end{equation}

\textcolor{black}{The method Adam is used to optimize the loss function (Eq \eqref{eq:MMPDE_loss}) in the training of MMPDE-Net.} Then, MMPDE-Net is described in Algorithm \ref{alg:MMPDE-Net} for the two-dimensional problem, and a flowchart of the computational process is given in Fig \ref{fig:MMPDE-Net}.

\begin{algorithm}[htbp]
\caption{Algorithm of MMPDE-Net in the 2D case}\label{alg:MMPDE-Net}
\textbf{Symbols:} Maximum epoch number of MMPDE-Net $N_1$; the number of total points $M_r$; initial training points $\mathcal{X} :=\left\{(\xi_k,\eta_k)\right\}_{k=1}^{M_r} \subset \Omega$; parameter $\tau$ in Eq \eqref{eq:MMPDE_loss}.

\textbf{Constructing the monitor function:}\\
Given the sampling points $\mathcal{X}$.

Utilizing prior knowledge to obtain $[u(\mathcal{X}),\nabla u(\mathcal{X}),...]$.

Construct the monitor function $w[u(\mathcal{X}),\nabla u(\mathcal{X}),...]$, abbreviated as $w(u(\mathcal{X}))$.

\textbf{Adaptive Sampling:}\\
Input $\mathcal{X}$ into neural networks.

Initialize the output $\tilde {\mathcal{X}} := \tilde {\mathcal{X}} (\mathcal{X};\theta^0)$.

\For{$i =0:N_1-1$}{

    $\mathcal{L}oss[\tilde {\mathcal{X}} (\mathcal{X};\theta^i);w(u(\mathcal{X}))] = 
    \mathcal{L}oss_{MMPDE}(\mathcal{X};\theta^i)$ (Eq \eqref{eq:MMPDE_loss});

    Update $\theta^{i+1}$ by descending the gradient of $\mathcal{L}oss[\tilde {\mathcal{X}} (\mathcal{X};\theta^i);w(u(\mathcal{X}))]$.
}

Output the new training points $\tilde{\mathcal{X}}=\tilde {\mathcal{X}} (\mathcal{X};\theta^{N_1}) = \left\{(x_k,y_k)\right\}_{k=1}^{M_r}$.
\end{algorithm}

\begin{figure}[htbp]
\centering
\includegraphics[width = 0.98\textwidth]{./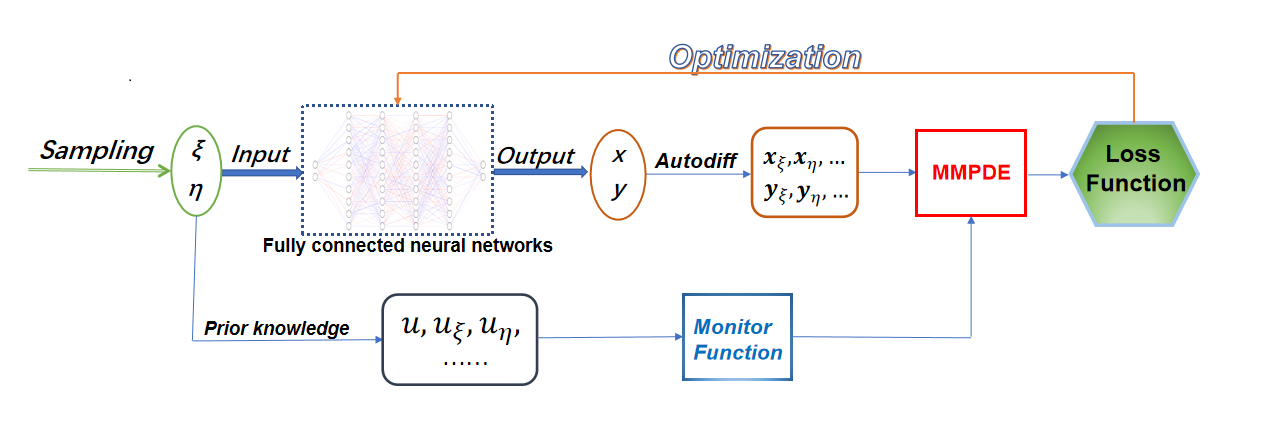}
\caption{Flow chart of MMPDE-Net.}
\label{fig:MMPDE-Net}
\end{figure}

Now we consider the solution function $u=ce^{-c^2{(x^2+y^2)}}$ and the monitor function $w = (1+u^2)^{\frac{1}{2}}$. The uniform sampling points are input into MMPDE-Net. The output of MMPDE-Net for $c = 5, 10, 50, 100$ are shown in Fig \ref{fig:wang_Fig2_a}. It can be seen the distribution of sampling points is clearly more concentrated on the place where the function value is large, which is due to the effect of the monitor function. In Fig \ref{fig:wgrad}, we consider function $u= e^{-(4x^2 + 9y^2 - 1)^2}$. It is obvious to see that the gradient of solution is important and  the monitor function is taken as $w = (1+(\lvert \nabla u \rvert)^2)^{\frac{1}{2}}$. The gradient of $u$ and the distribution of sampling points are shown in Fig \ref{fig:wgrad}. 

\begin{figure}[htbp]
\centering
\subfloat[c =5]{\includegraphics[width = 0.24\textwidth]{./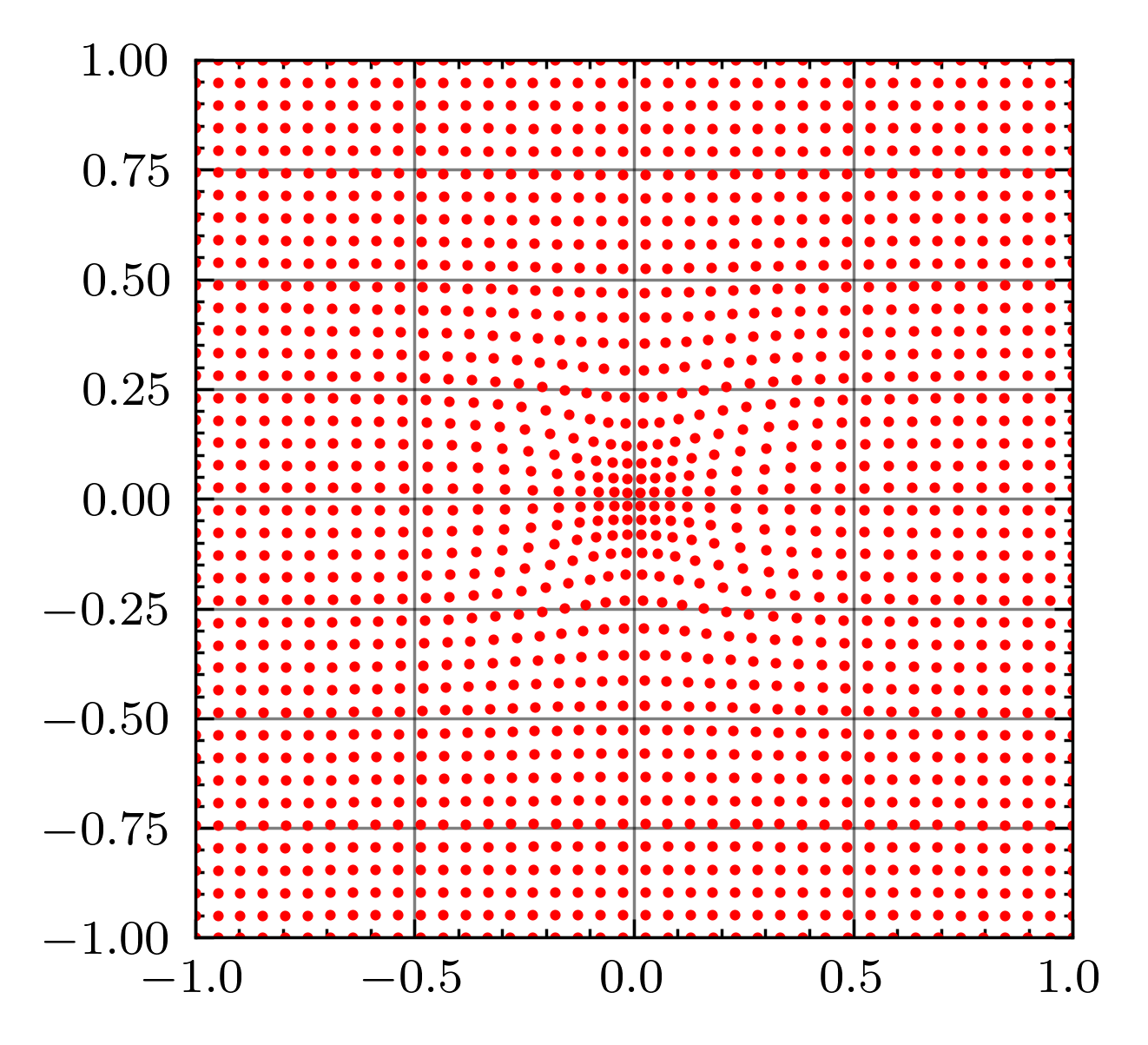}}
\subfloat[c =10]{\includegraphics[width = 0.24\textwidth]{./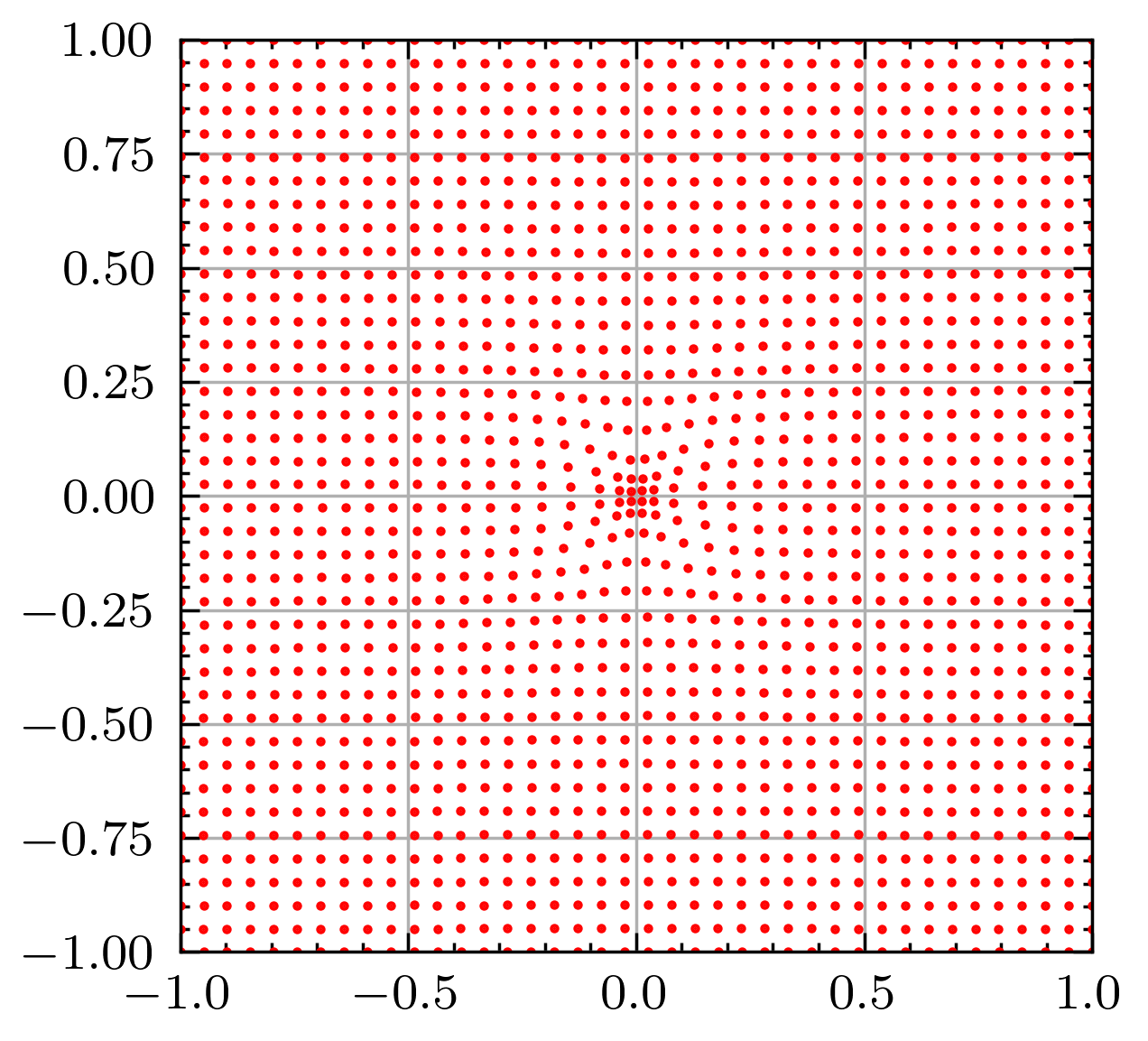}}
\subfloat[c =50]{\includegraphics[width = 0.24\textwidth]{./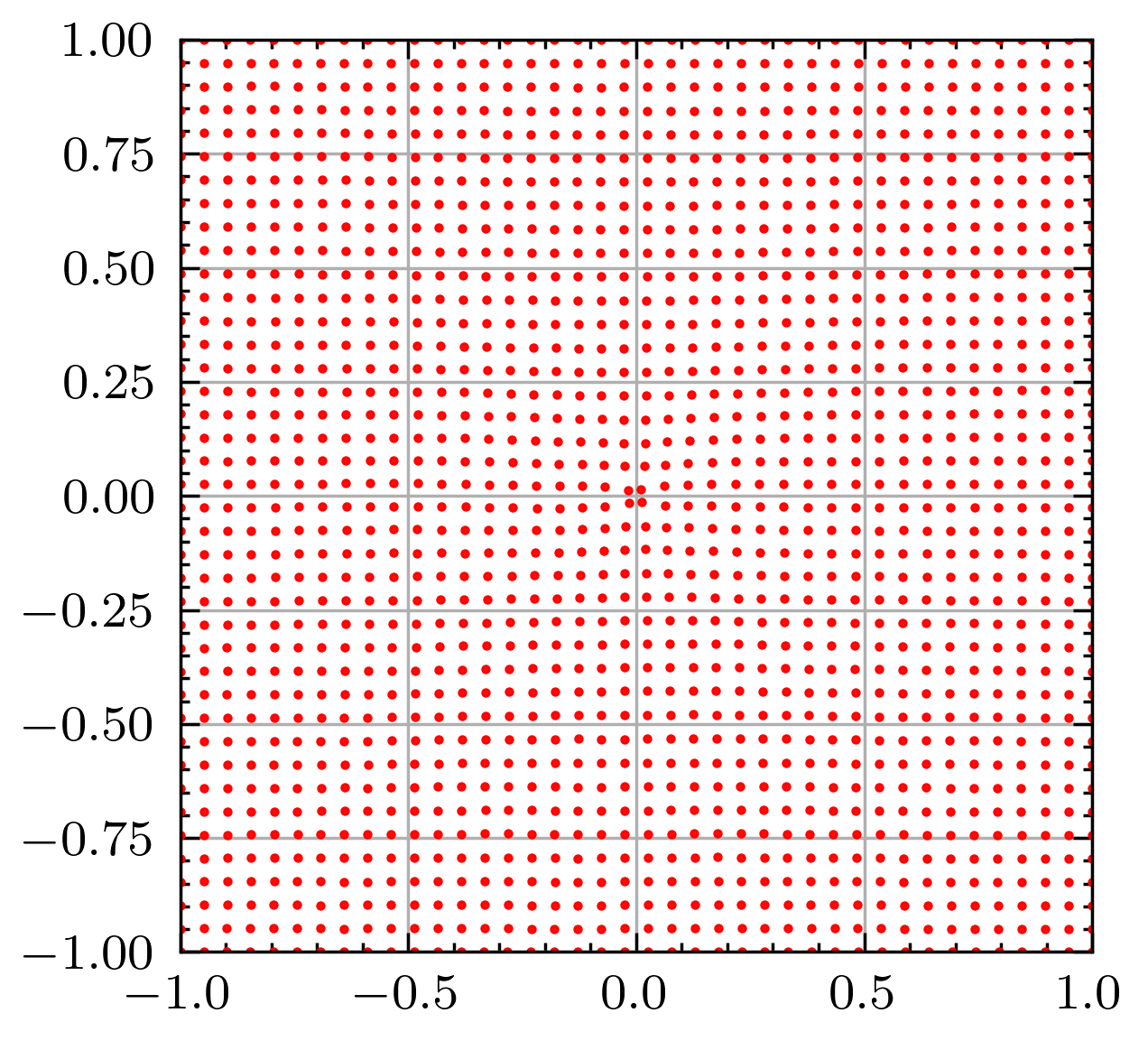}}
\subfloat[c =100]{\includegraphics[width = 0.24\textwidth]{./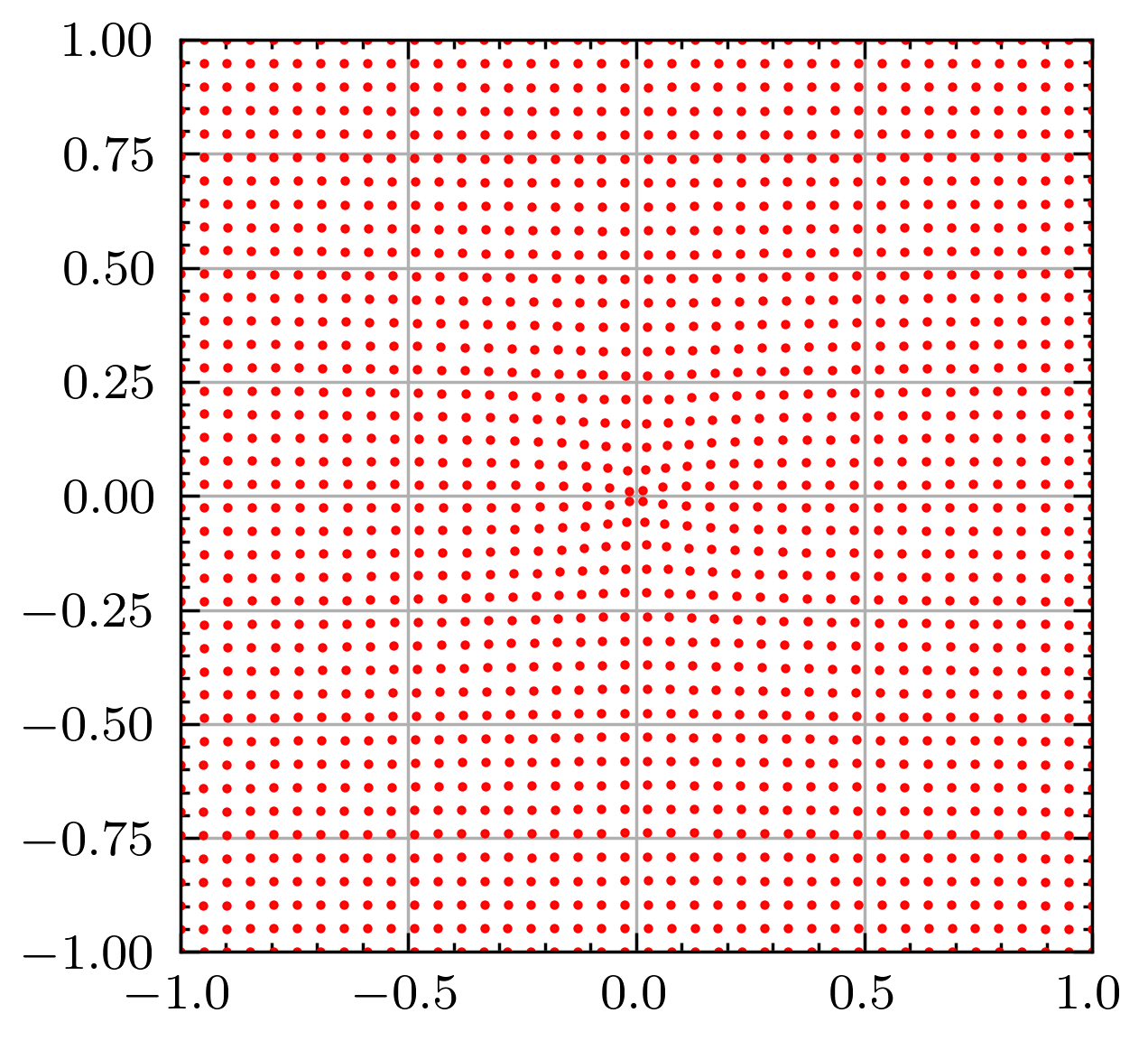}}
\caption{Points distribution for  $u=ce^{-c^2{(x^2+y^2)}}$  and $w = (1+u^2)^{\frac{1}{2}}$ in 2D case when $c$ is increased. (a) $c =5$;  (b) $c =10$;  (c) $c =50$;  (d) $c =100$.}
\label{fig:wang_Fig2_a}
\end{figure}

\begin{figure}[htbp]
\centering
\subfloat[$\nabla u$]{\includegraphics[width = 0.35\textwidth]{./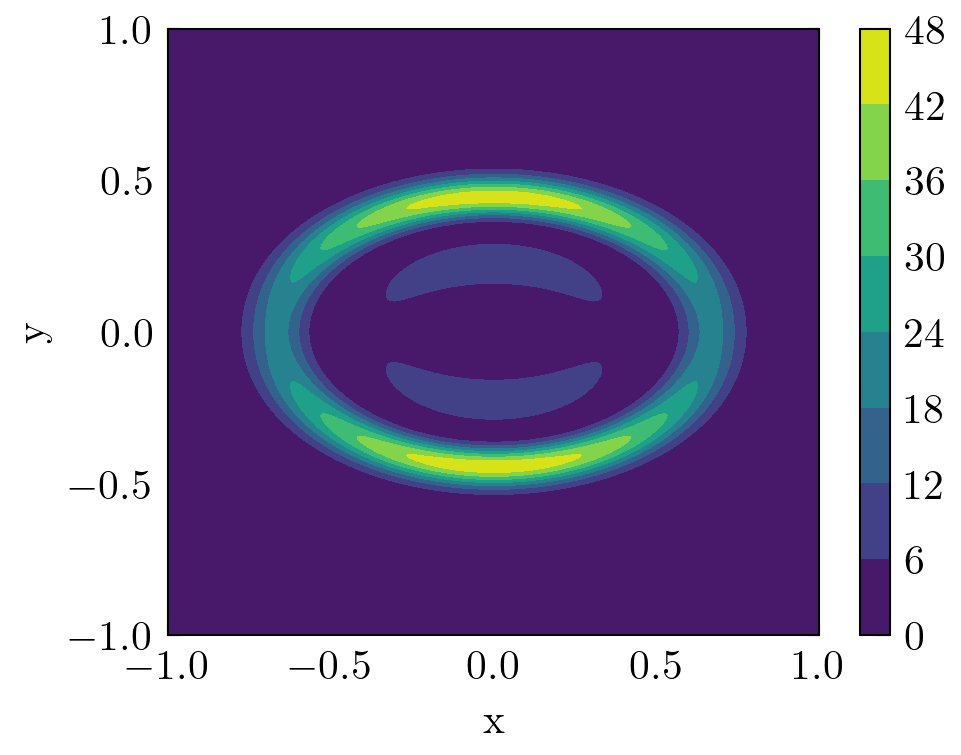}}\quad 
\subfloat[new distribution of sampling points]{\includegraphics[width = 0.30\textwidth]{./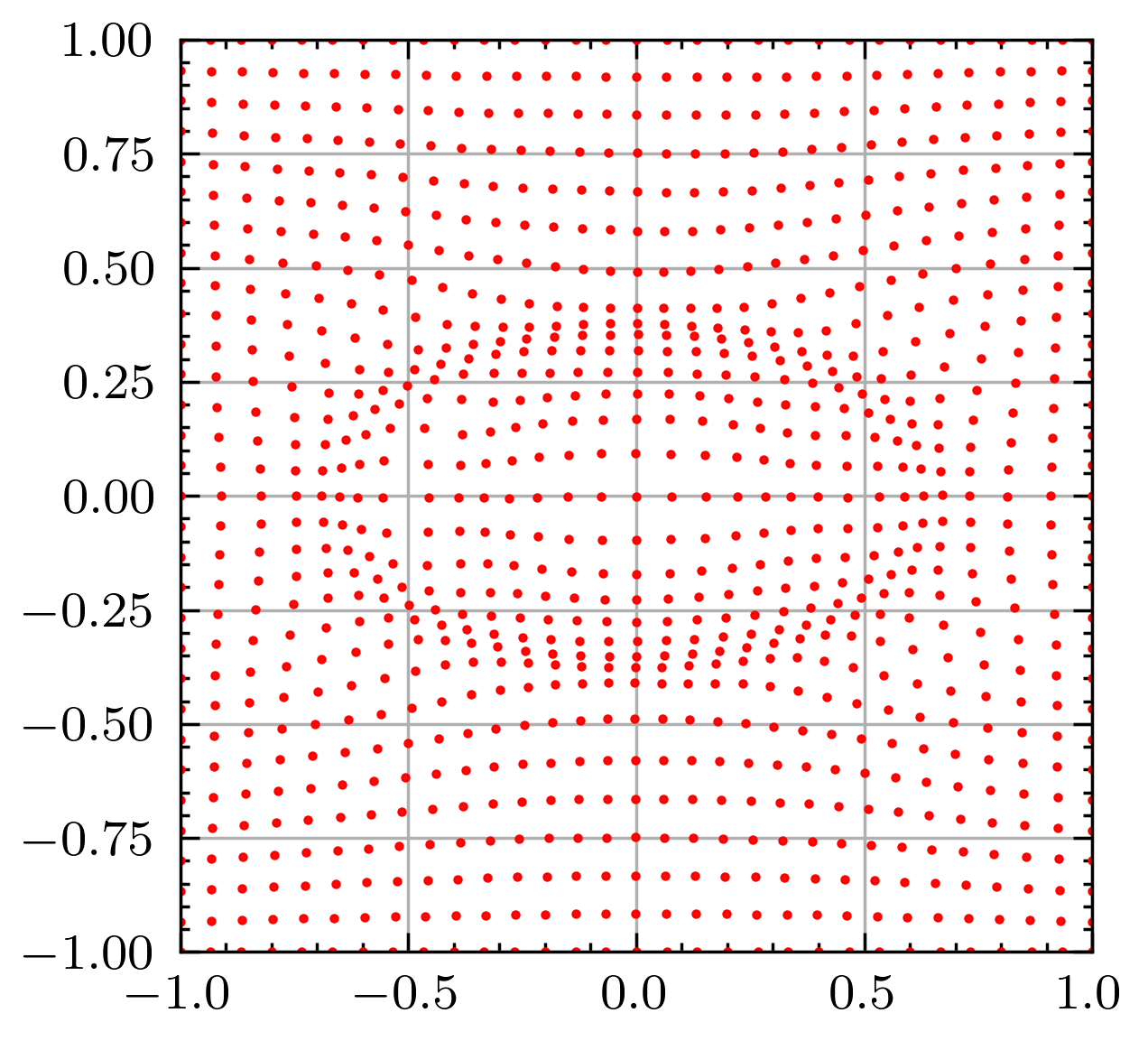}}
\caption{(a) the gradient of $u = e^{-(4x^2 + 9y^2 - 1)^2}$; (b) points distribution with $w = (1+(\lvert \nabla u \rvert)^2)^{\frac{1}{2}}$.} 
\label{fig:wgrad}
\end{figure}


As in Section \ref{sec:An interesting phenomenon},
the $\bk$ is denoted as the complexity of the solution $u$. After using MMPDE-Net, the sampling points obey the distribution $\bx \sim \rho(\bk;\bx)$ induced by the monitor function $w(\bk;\bx)$ and MMPDE Eq \eqref{eq:MMPDE_2D}, where $\rho(\bk;\bx)$ is a probability density function (PDF), i.e., $\int_{\Omega}\rho(\bk;\bx)d\bx = 1 $ and $\rho(\bk;\bx) \geq 0$. 
As a result, the sampling points are distributed in the region where the monitor function $w(\bk;\bx)$ is large, i.e., enough points are sampled from the PDF $\rho(\bk;\bx)$.

\subsection{MMPDE-Net with iterations}
\label{sec:MMPDE-Net-Iterations}

An iterative method for moving mesh is proposed in \cite{ren-wang}. The iteration of the grid generation allows us to more precisely control the grid distribution in the regions where the solution has large variations. 
In this section, we introduce the iterative algorithm for MMPDE-Net.

Since new sampling points are generated in each iteration based on the old ones, i.e., a one-to-one coordinate transformation is implemented in each iteration, which corresponds to performing the training of MMPDE-Net. It is worth noting that at each training of MMPDE-Net, we need to formulate the monitor function based on the most recent input training points and reinitialize the parameters of the neural networks. MMPDE-Net  with iterations is described in Algorithm \ref{alg:MMPDE-Net-iterations} for the 2D case.

\begin{algorithm}[htbp]
\caption{Algorithm of MMPDE-Net with iterations in the 2D case}
\label{alg:MMPDE-Net-iterations}
\textbf{Symbols:} Maximum epoch number of MMPDE-Net $N_1$;  the number of total points $M_r$; initial training points $\mathcal{X}_0 :=\left\{(\xi_k,\eta_k)\right\}_{k=1}^{M_r} \subset \Omega$;  maximum number of iterations $N_{it}$; parameter $\tau$ in Eq \eqref{eq:MMPDE_loss}.

\For{$j =0:N_{it}-1$}{
\textbf{Constructing the monitor function:}\\
Given the sampling points $\mathcal{X}_j$.

Utilizing prior knowledge to obtain $[u(\mathcal{X}_j),\nabla u(\mathcal{X}_j),...]$.

Construct the monitor function $w[u(\mathcal{X}_j),\nabla u(\mathcal{X}_j),...]$, abbreviated as $w(u(\mathcal{X}_j))$.

\textbf{Adaptive Sampling:}\\
Input $\mathcal{X}_j$ into neural networks.

Initialize the output $\tilde{\mathcal{X}}:=\tilde {\mathcal{X}} (\mathcal{X}_j;\theta^{0}_j)$.

\For{$i =0:N_1-1$}{

    $\mathcal{L}oss[\tilde {\mathcal{X}} (\mathcal{X}_j;\theta^i_j);w(u(\mathcal{X}_j))] = 
    \mathcal{L}oss_{MMPDE}(\mathcal{X}_j;\theta^i_j)$ (Eq \eqref{eq:MMPDE_loss});

    Update $\theta^{i+1}_j$ by descending the gradient of $\mathcal{L}oss[\tilde {\mathcal{X}}(\mathcal{X}_j;\theta^i_j);w(u(\mathcal{X}_j))]$.
}

Output the new training points $\tilde{\mathcal{X}} =\tilde {\mathcal{X}} (\mathcal{X}_j;\theta^{N_1}_j)$.

Update $\mathcal{X}_{j+1} =\tilde {\mathcal{X}}$.

}
Output the final training points $\mathcal{X}_{N_{it}}:=\tilde {\mathcal{X}} (\mathcal{X}_0;\theta^{N_1}_0,\theta^{N_1}_1,...,\theta^{N_1}_{N_{it}-1}) = \left\{(x_k,y_k)\right\}_{k=1}^{M_r}$.
\end{algorithm}

\begin{figure}[htbp]
\centering
\subfloat[1 iteration]{\includegraphics[width = 0.24\textwidth]{./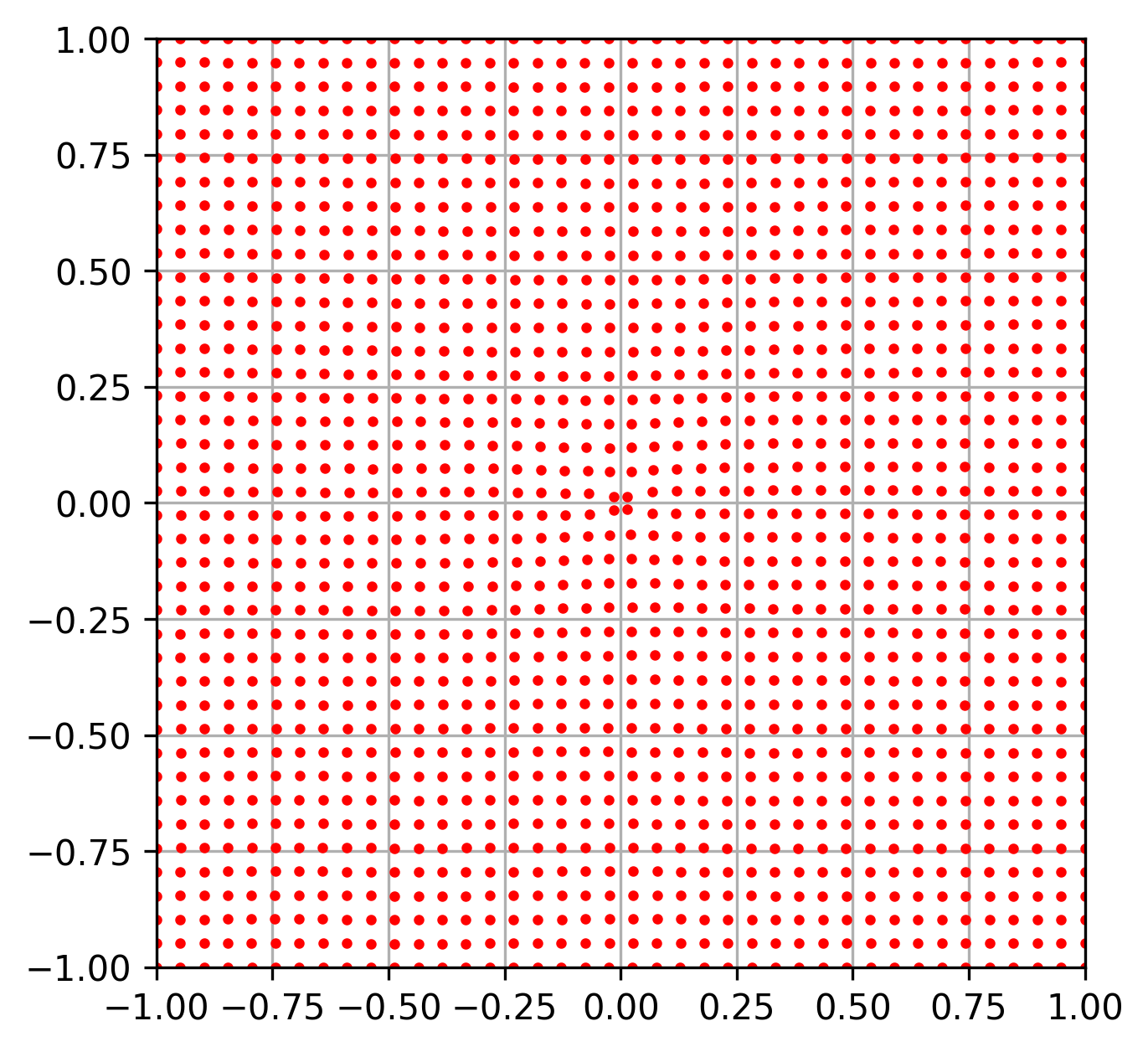}}
\subfloat[2 iterations]{\includegraphics[width = 0.24\textwidth]{./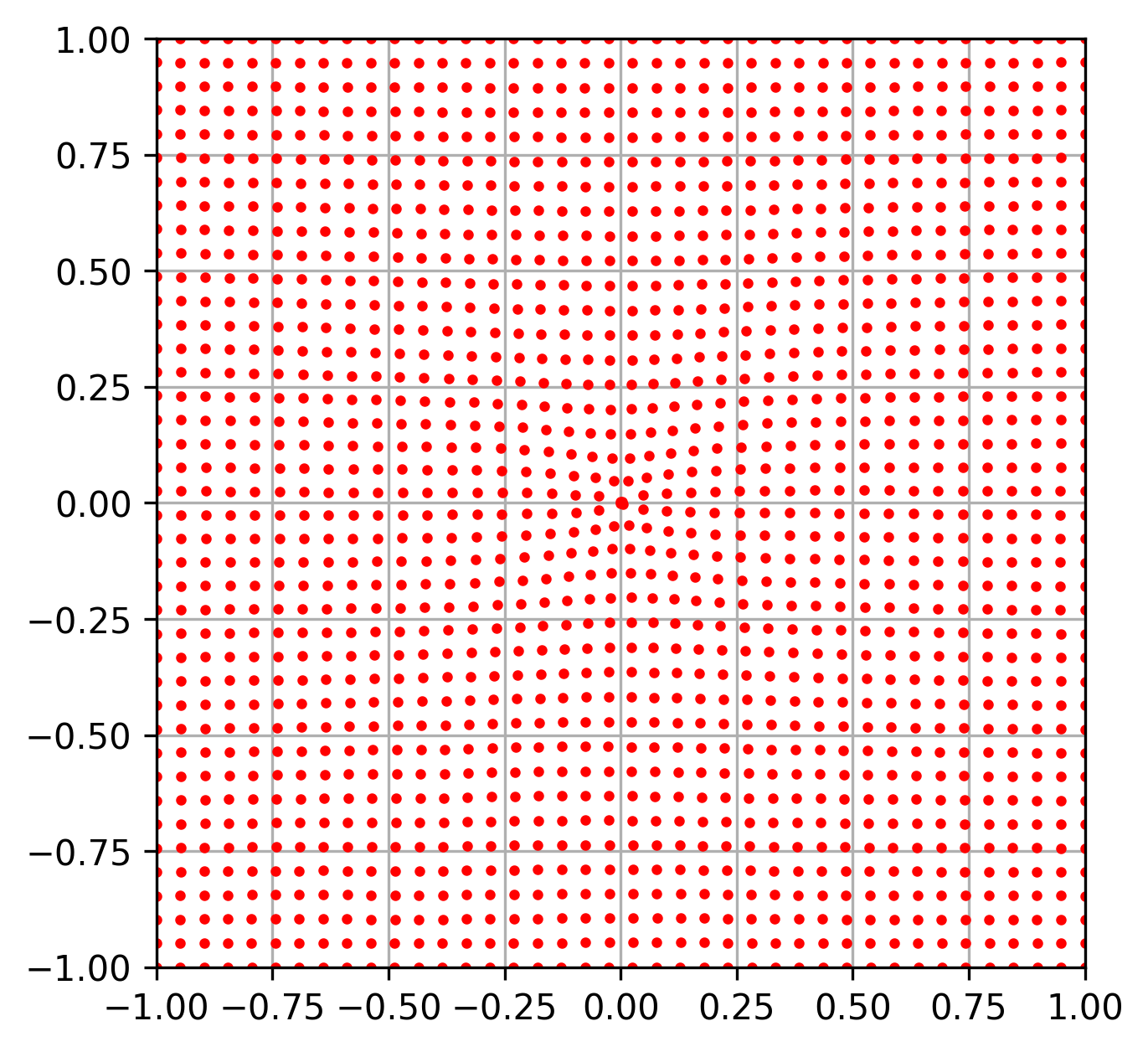}}
\subfloat[3 iterations]{\includegraphics[width = 0.24\textwidth]{./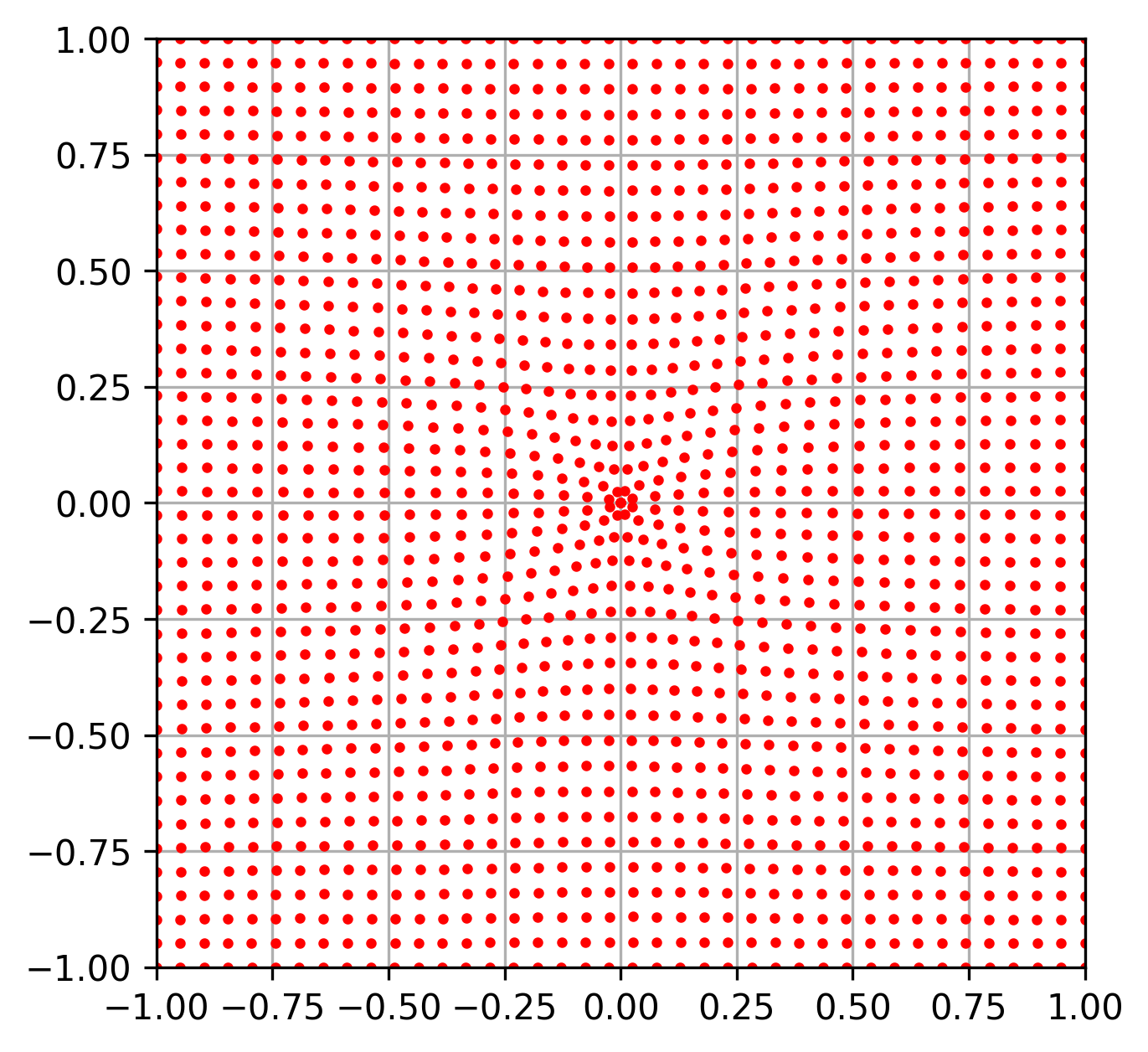}}
\subfloat[5 iterations]{\includegraphics[width = 0.24\textwidth]{./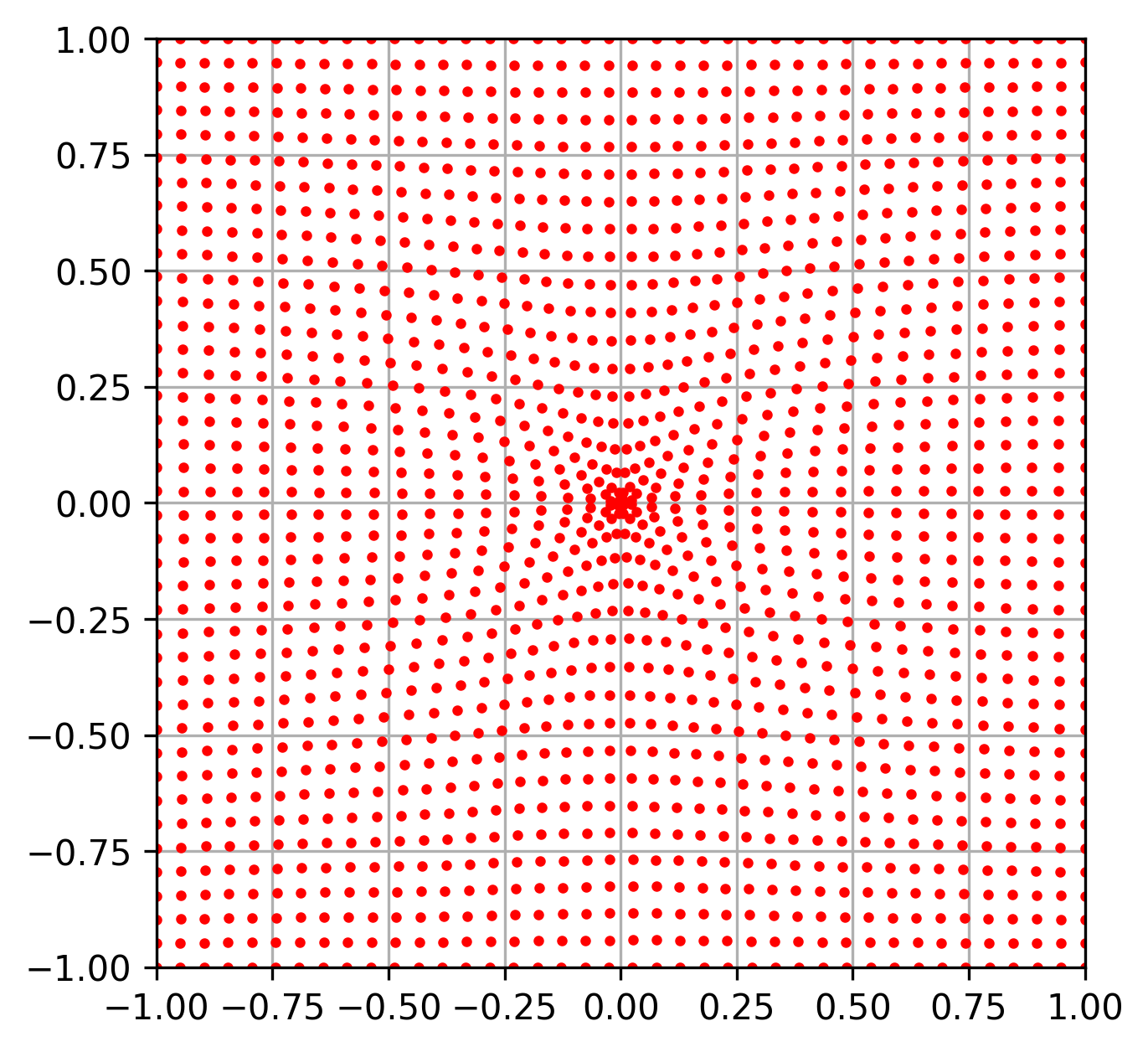}}
\caption{Points distribution  for $u=50e^{-c^2{(x^2+y^2)}}$ and $w = (1+u^2)^{\frac{1}{2}}$ after iterations. (a) after 1 iteration;  (b) after 2 iterations;  (c) after 3 iterations; (d) after 5 iterations.}
\label{fig:wang_Fig5}
\end{figure}

\begin{figure}[htbp]
\centering
\subfloat[1 iteration]{\includegraphics[width = 0.24\textwidth]{./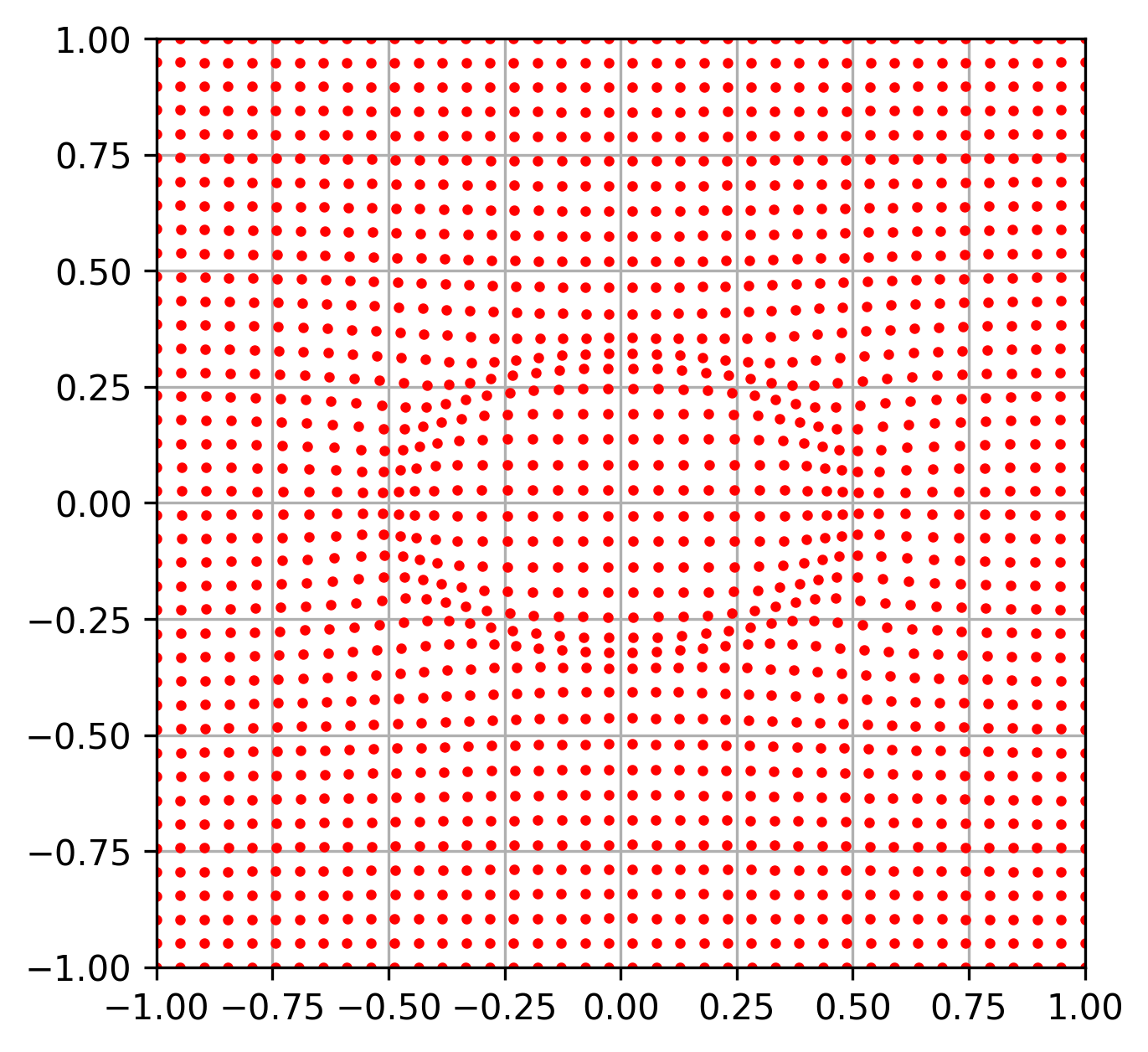}}
\subfloat[2 iterations]{\includegraphics[width = 0.24\textwidth]{./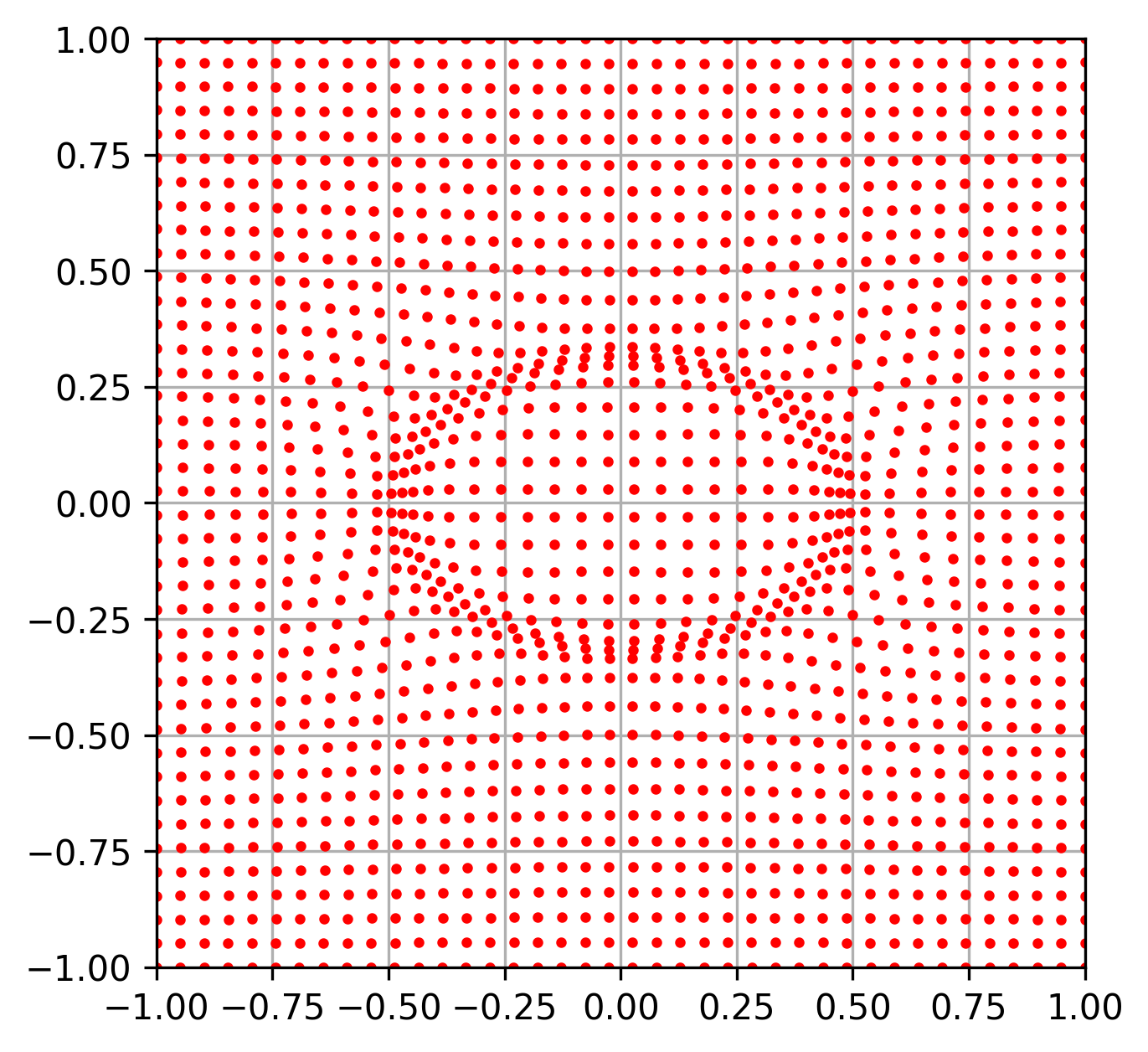}}
\subfloat[3 iterations]{\includegraphics[width = 0.24\textwidth]{./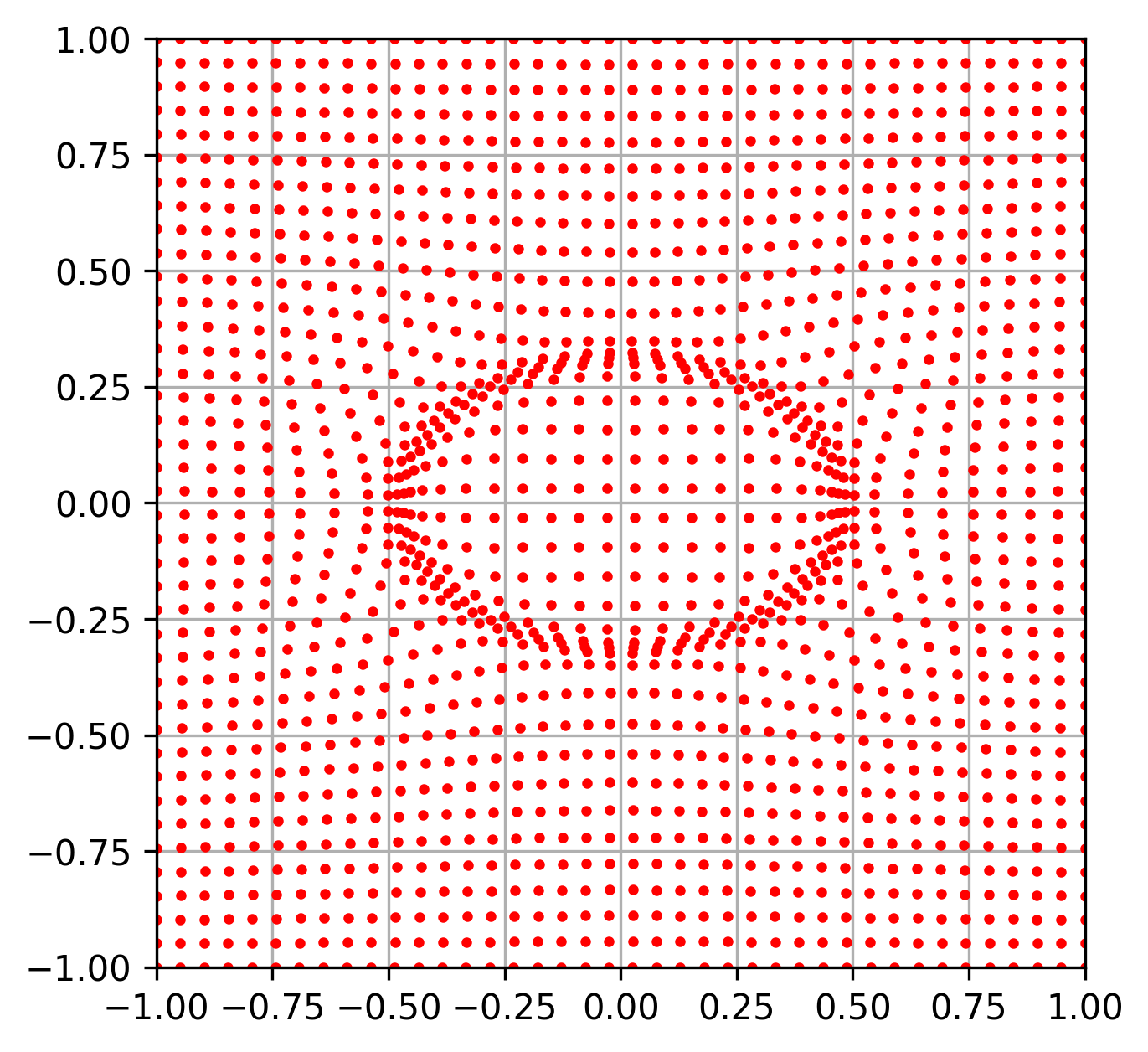}}
\subfloat[4 iterations]{\includegraphics[width = 0.24\textwidth]{./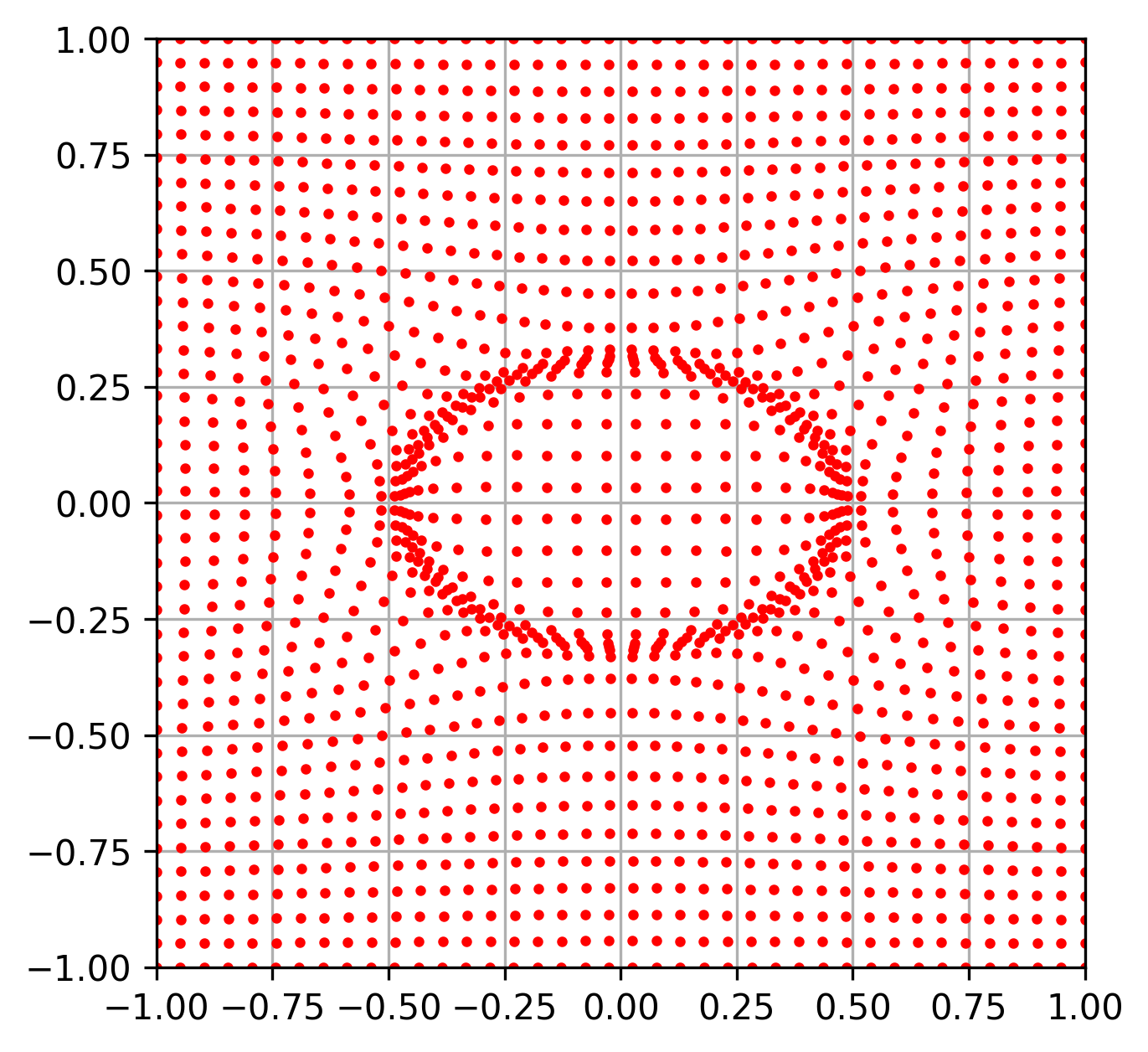}}
\caption{Points distribution  for $u=e^{-8{(4x^2+9y^2-1)^2}}$  and $w =1+u$ after several iterations. (a) after 1 iteration; (b) after 2 iterations;  (c) after 3 iterations; (d) after 4 iterations.}
\label{fig:wang_Fig6a}
\end{figure}

\begin{figure}[htbp]
\centering
\subfloat[1 iteration]{\includegraphics[width = 0.24\textwidth]{./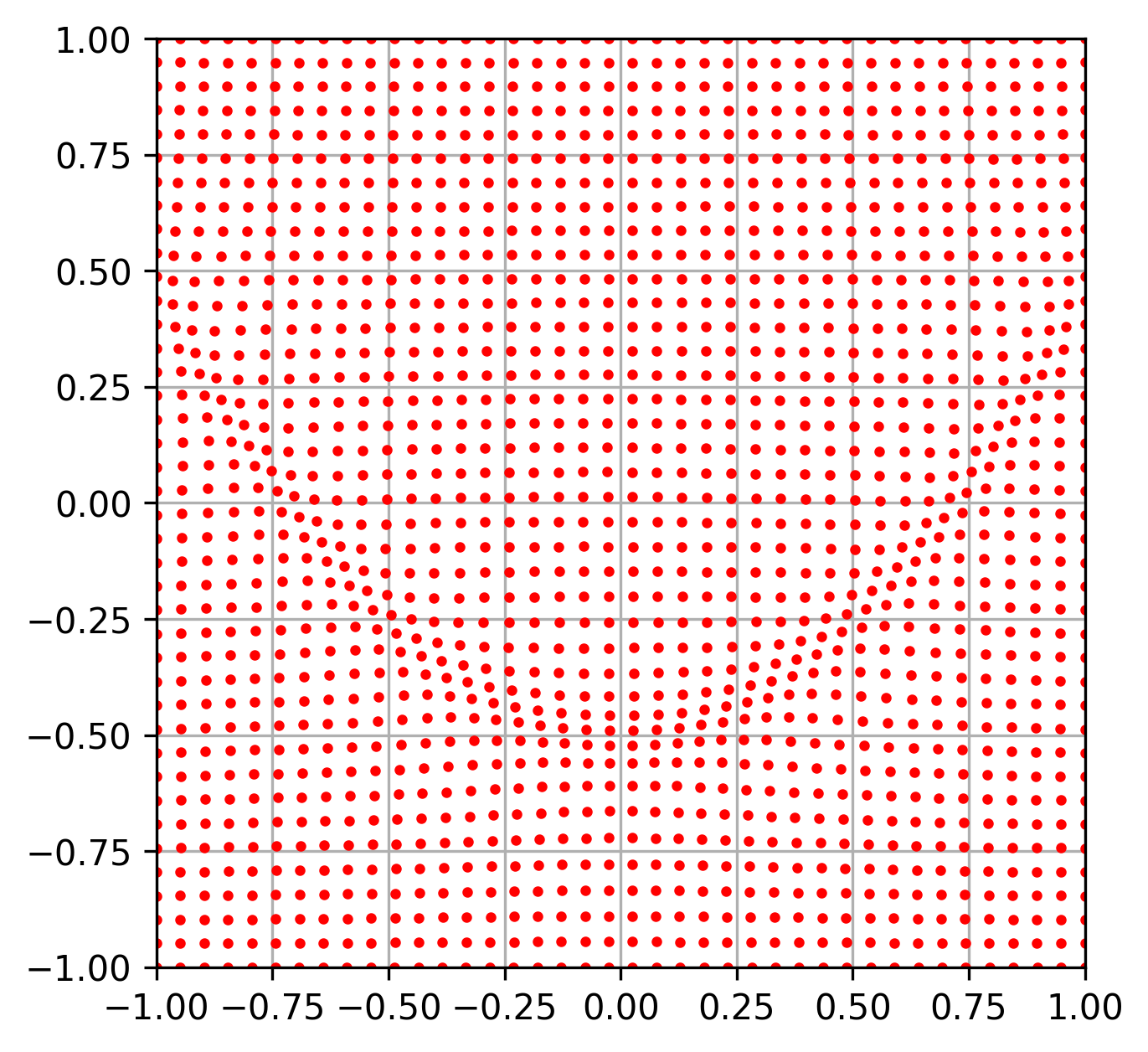}}
\subfloat[2 iterations]{\includegraphics[width = 0.24\textwidth]{./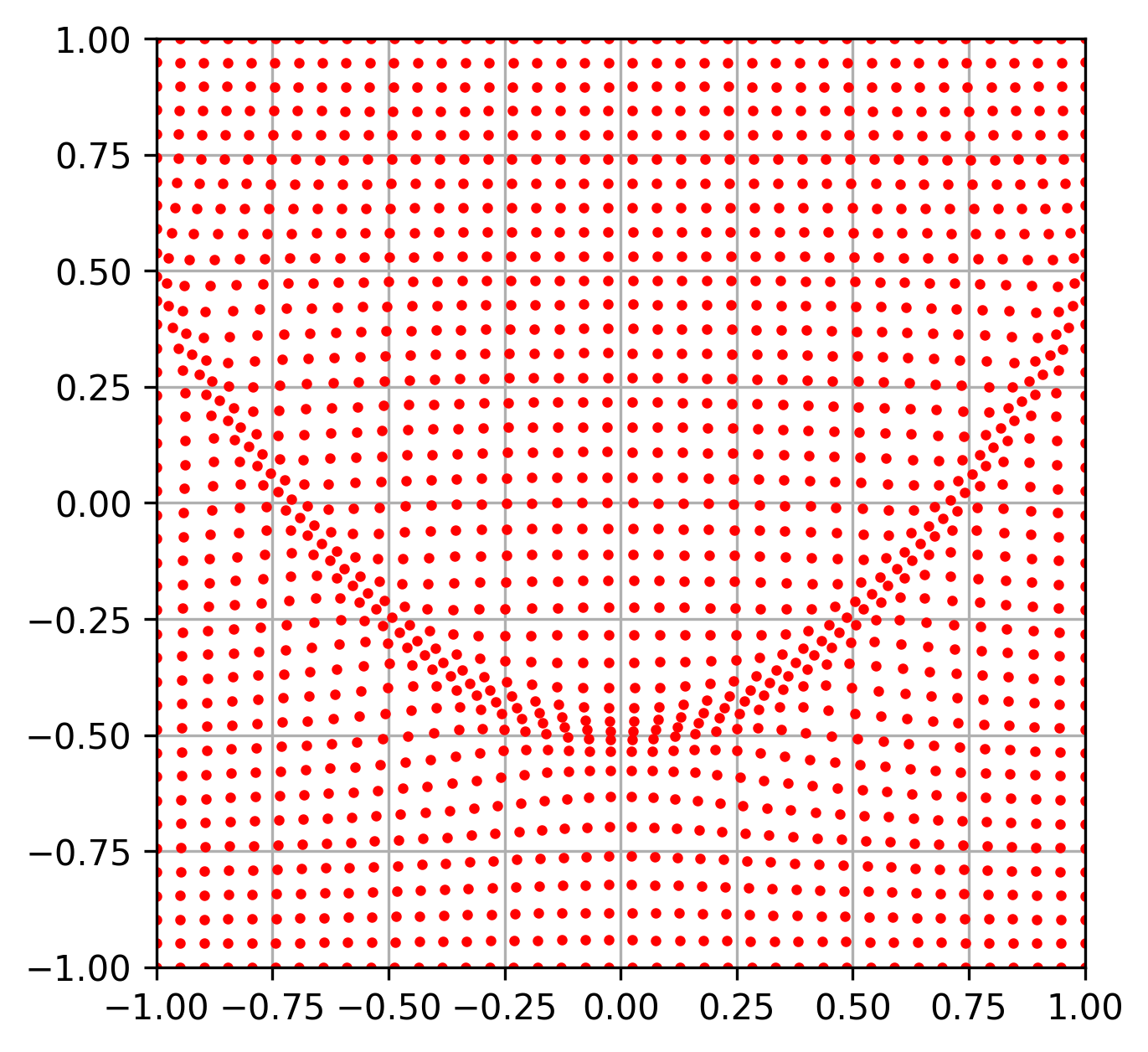}}
\subfloat[3 iterations]{\includegraphics[width = 0.24\textwidth]{./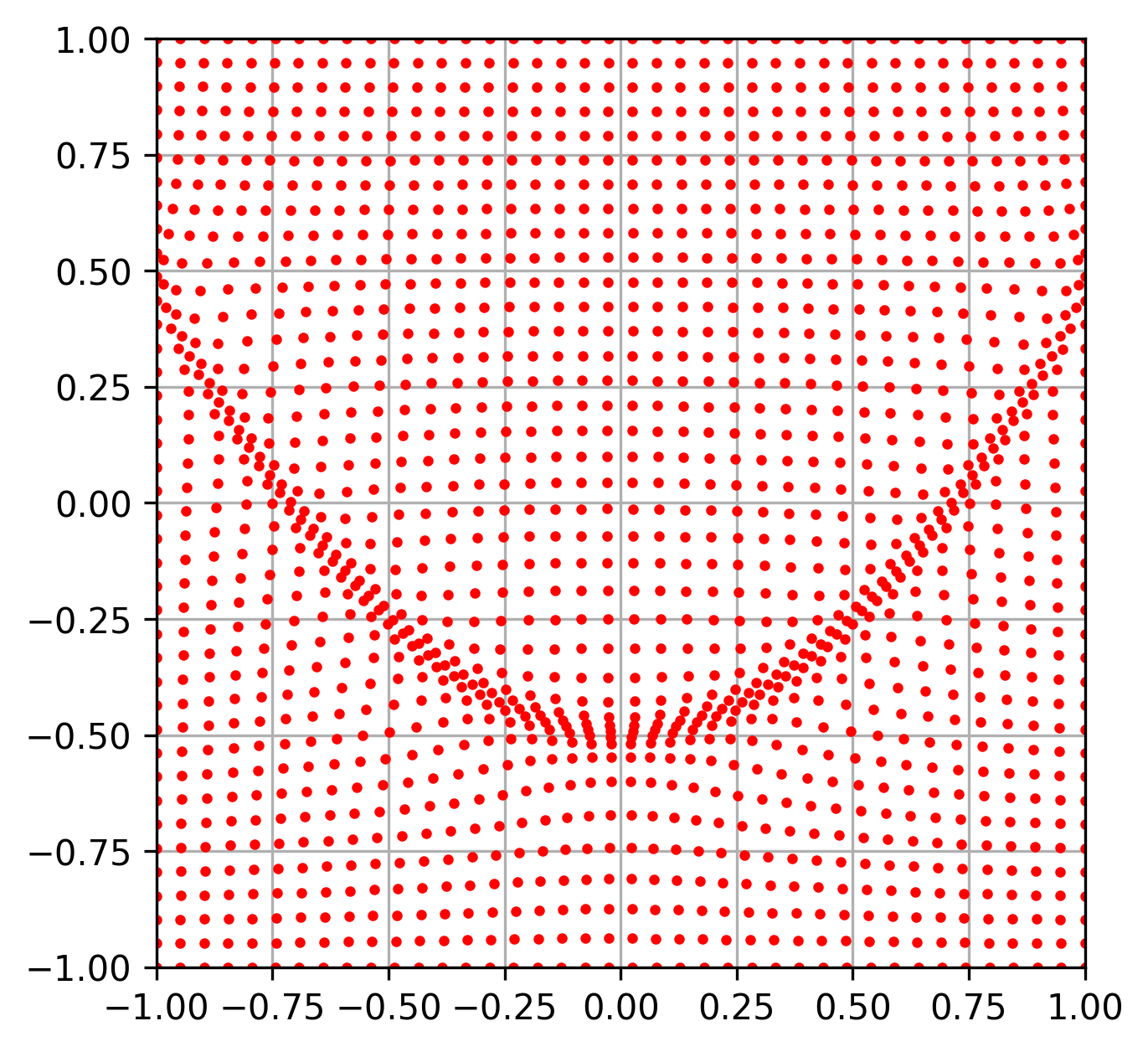}}
\subfloat[4 iterations]{\includegraphics[width = 0.24\textwidth]{./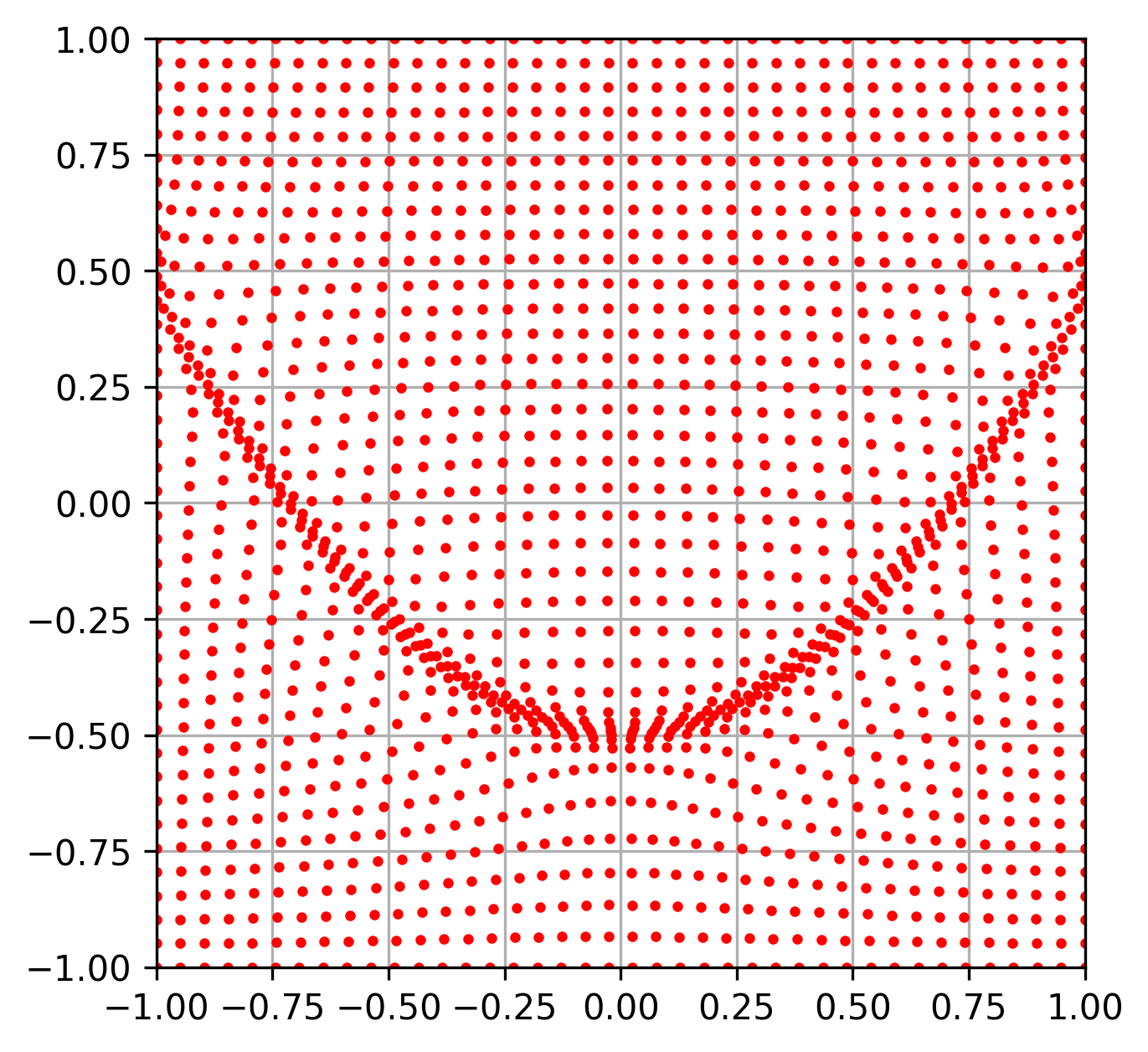}}
\caption{Points distribution  for $u=e^{-100{(-x^2+y+0.5)^2}}$  and $w =1+u$  after several iterations. (a) after 1 iteration;  (b) after 2 iterations;  (c) after 3 iterations; (d) after 4 iterations.}
\label{fig:wang_Fig6b}
\end{figure}

In Fig \ref{fig:wang_Fig5}, \ref{fig:wang_Fig6a} and \ref{fig:wang_Fig6b}, we reproduce the numerical results in \cite{ren-wang} about mesh iterations. In these figures, the monitor function is given and only iteration number varies.
It can be seen that as the iterations increase, the distribution of points behaves more and more concentrated which is due to the monitor function. In fact, 
this iterative process with a well-chosen monitor function can be repeated until the concentration at the sampling point is satisfactory \cite{ren-wang}, as long as the loss function Eq \eqref{eq:MMPDE_loss} can be optimized. 

\section{Implement of MS-PINN}
\label{sec:Implement of MS-PINN}
   
\subsection{MS-PINN}
\label{sec:MS-PINN}

Since MMPDE-Net is essentially a coordinate transformation mapping, it can be combined with many deep learning solvers, in this work we choose PINN, which is one of the most widely used neural networks.
     
It is not straightforward to 
embed MMPDE-Net into PINN, for example, by adding loss Eq \eqref{eq:MMPDE_loss} to the loss of PINN, which would not only make the loss function more difficult to optimize, but also reduce the efficiency of training. The combination should be a mutually beneficial relationship: PINN will feed MMPDE-Net the prior information about the solution to help to define the monitor function; MMPDE-Net will feed new sampling points back to PINN to output better approximate solutions.

Denote $\theta_P$ and $\theta_{MM}$ as the parameters of PINN and MMPDE-Net respectively, the training of MS-PINN  is divided into three stages: 

\begin{itemize}
    \item The pre-training with PINN. The input is the initial sampling points $\mathcal{X}$, and after a small number $N_1$ epochs of training, the output is the information of the solution on the initial training points $u(\mathcal{X};\theta_P^{N_1}),\nabla u(\mathcal{X};\theta_P^{N_1}),...$. 
    \item The adaptive sampling with MMPDE-Net. The input is the initial sampling points $\mathcal{X}$, at the same time, we use the outputs of PINN from the pre-training stage to define the monitor function $w[u(\mathcal{X};\theta_P^{N_1}),\nabla u(\mathcal{X};\theta_P^{N_1}),...)]$, abbreviated by $w(u(\mathcal{X};\theta_P^{N_1}))$, and the output is the new sampling points  after the coordinate transformation $\tilde{\mathcal{X}}=\tilde {\mathcal{X}} (\mathcal{X};\theta_P^{N_1},\theta_{MM}^{N_2})$.
    \item The formal training with PINN. The input is the new sampling points $\tilde{\mathcal{X}}$ and after $N_3$ epochs of training, the output is the solution at the new sampling points $u(\tilde{\mathcal{X}};\theta_P^{N_3})$.
\end{itemize}

\textcolor{black}{It is worth noting that we do not reinitialize the parameters of the PINN during the formal training stage. Instead, we utilize the transfer learning technique in machine learning to inherit the parameters obtained from the pre-training, which does help the convergence of the loss function.} Moreover, the loss functions of MMPDE-Net and PINN are independent; the loss function of MMPDE-Net is in the form of Eq \eqref{eq:MMPDE_loss} which consists of only one loss term based on MMPDE; the loss function of PINN is in the form of Eq \eqref{eq:L2_empiricalloss}, which generally consists of a residual loss term and a boundary loss term.

\begin{algorithm}[htbp]
\caption{Algorithm of MS-PINN in the 2D case}\label{alg:MMPDE-PINN}

\textbf{Symbols:} Maximum epoch number of pre-training, MMPDE-Net and  formal training: $N_1$, $N_2$ and $N_3$; the numbers of total points in $\Omega$  and $\partial \Omega$: $M_r$, and $M_b$; initial training points $\mathcal{X} :=\left\{(\xi_k,\eta_k)\right\}_{k=1}^{M_r} \subset \Omega$ and $\mathcal{X}_b :=\left\{(\xi_k,\eta_k)\right\}_{k=1}^{M_b} \subset \partial\Omega$; points in the test set $\mathcal{X}_T :=\left\{(\xi_k^T,\eta_k^T)\right\}_{k=1}^{M_T} \subset \Omega \cup \partial\Omega$; the number of test set points $M_T$; $\theta_P$ and $\theta_{MM}$ are the parameters of PINN and MMPDE-Net respectively; parameter $\tau$ in Eq \eqref{eq:MMPDE_loss}.

\textbf{Pre-training:}\\
Input $\mathcal{X}$ and  $\mathcal{X}_b$ into PINN. 

Initialize the output $u(\mathcal{X},\mathcal{X}_b;\theta_P^0)$.

\For{$i =0:N_1-1$}{
 
$\mathcal{L}oss[u(\mathcal{X},\mathcal{X}_b;\theta_{P}^i)] := \mathcal{L}oss_{res}(\mathcal{X};\theta_{P}^i) + \mathcal{L}oss_{bou}(\mathcal{X}_b;\theta_{P}^i)$;

Update $\theta_{P}^{i+1}$ by descending the gradient of $\mathcal{L}oss[u(\mathcal{X},\mathcal{X}_b;\theta_{P}^i)]$.
}
Output $u(\mathcal{X};\theta_{P}^{N_1}),\nabla u(\mathcal{X};\theta_{P}^{N_1}),...$.

Construct the monitor function $w[u(\mathcal{X};\theta_{P}^{N_1}),\nabla u(\mathcal{X};\theta_{P}^{N_1}),...)]$, abbreviated as $w(u(\mathcal{X};\theta_{P}^{N_1}))$.

\textbf{Adaptive Sampling:}\\

Input $\mathcal{X}$ into MMPDE-Net.

Initialize the output $\tilde {\mathcal{X}} := \tilde {\mathcal{X}} (\mathcal{X};\theta_{MM}^0)$.

\For{$i =0:N_2-1$}{
    
    $\mathcal{L}oss[\tilde {\mathcal{X}} (\mathcal{X};\theta_{MM}^i);w(u(\mathcal{X};\theta_{P}^{N_1}))] = 
    \mathcal{L}oss_{MMPDE} (\mathcal{X};\theta_{MM}^i)$ 
 (Eq \eqref{eq:MMPDE_loss});

    Update $\theta_{MM}^{i+1}$ by descending the gradient of $\mathcal{L}oss[\tilde {\mathcal{X}} (\mathcal{X};\theta_{MM}^{i});w(u(\mathcal{X};\theta_{P}^{N_1}))]$.
}

Output the new traning points $\tilde{\mathcal{X}} =\tilde {\mathcal{X}} (\mathcal{X};\theta_{MM}^{N_2}) = \left\{(x_k,y_k)\right\}_{k=1}^{M_r}$.

\textbf{Formal training:}\\
Input  $\tilde{\mathcal{X}}$ and $\mathcal{X}_b$ into PINN. 

Inheriting parameters from pre-training $\theta_{P}^{0} = \theta_{P}^{N_1}$.

Initialize the output $u(\tilde{\mathcal{X}},\mathcal{X}_b;\theta_P^0)$.

\For{$i =0:N_3-1$}{
 
$\mathcal{L}oss[u(\tilde {\mathcal{X}},\mathcal{X}_b;\theta_{P}^i)] := \mathcal{L}oss_{res}(\tilde {\mathcal{X}};\theta_{P}^i)  + \mathcal{L}oss_{bou}
(\mathcal{X}_b;\theta_{P}^i)$;

Update $\theta_{P}^{i+1}$ by descending the gradient of $\mathcal{L}oss[u(\tilde {\mathcal{X}},\mathcal{X}_b;\theta_{P}^i)]$.
}

Input  test set $\mathcal{X}_T$ into PINN. 

Output $u(\mathcal{X}_T;\theta_{P}^{N_3})$.
\end{algorithm}

\begin{figure}[htbp]
\centering
\includegraphics[width = 0.85\textwidth]{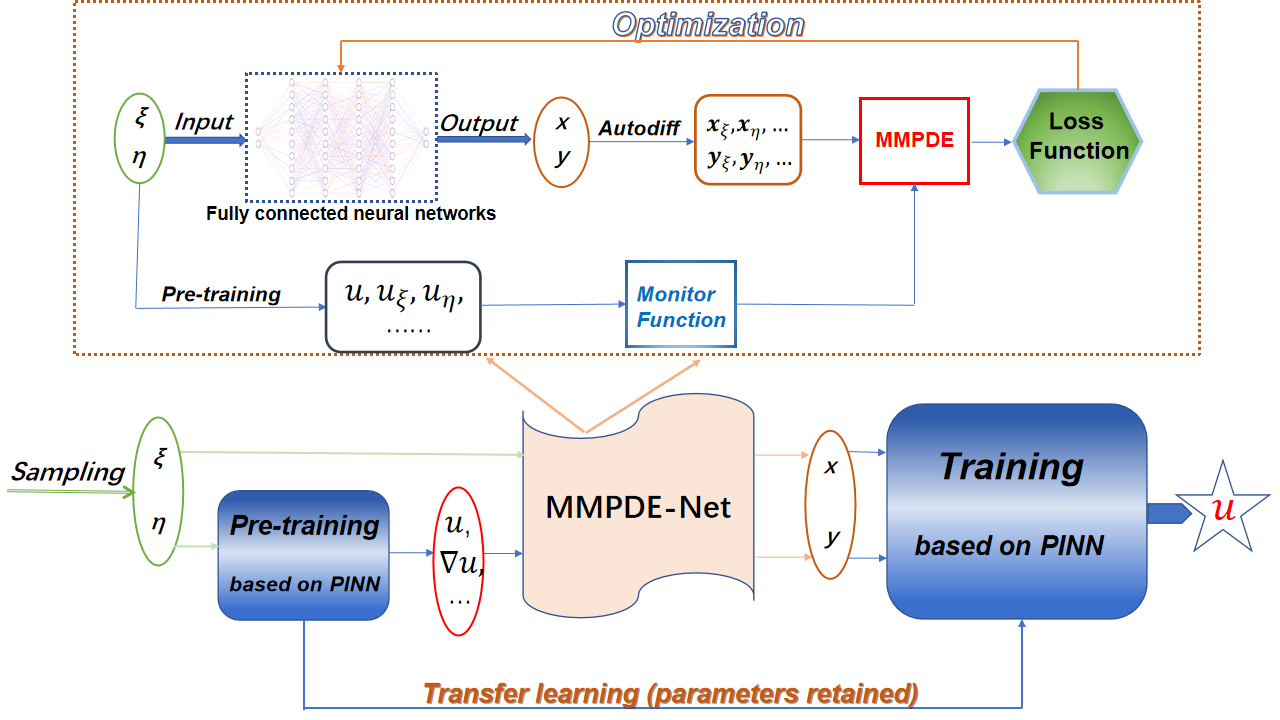}
\caption{Flow chart of MS-PINN.}
\label{fig:MMPDE-PINN}
\end{figure}

MS-PINN is described in Algorithm \ref{alg:MMPDE-PINN} for the two-dimensional problem, and the corresponding flowchart is given in Fig \ref{fig:MMPDE-PINN}. In fact, the iterative algorithm in Section \ref{sec:MMPDE-Net-Iterations} can be applied to MS-PINN as well, by simply changing the single MMPDE-Net training in Algorithm \ref{alg:MMPDE-PINN} to multiple iterations of MMPDE-Net training (like Algorithm \ref{alg:MMPDE-Net-iterations}). We will not discuss further about this part, more details can be found in the numerical examples in Section \ref{sec:MS-PINN with the iterative MMPDE-Net algorithm}.
\subsection{Error Estimation}
\label{sec:Error estimation}
In this section, we will discuss the error estimate to verify the MS-PINN.
We consider the problem Eq \eqref{eq:gen_PDE} and assume that both $\mathcal{A}$ and $\mathcal{B}$ are linear operators. 
According to the description in Section \ref{sec:MMPDE-Net}, traditional PINN usually samples collocation points based on a uniform distribution $U(\bx)$, and the collocation points output by MMPDE-Net are subject to a new distribution, denoted by $\rho_{MM}(\bx)$.
In order to derive the error estimation for deep learning solvers, Assumption \ref{asu:rho} for MMPDE-Net is given.

\begin{assumption}
   \label{asu:rho}
   The optimization algorithm always finds the optimal parameters $\theta$ to minimize the loss function, i.e., $\min\limits_{\theta} \mathcal{L}oss_{MMPDE}(\xi,\eta;\theta)$.
\end{assumption}

Since the sampling distribution is very important, we define the norm $\Vert \cdot \Vert_{2,\rho}$ as follows 

\begin{equation}\label{eq:L_2rho}
     \Vert  f(\bx) \Vert_{2,\rho} = \left(\int_{\Omega} |f(\bx)|^2 \rho(\bx)d\bx \right)^{\frac{1}{2}}, \quad \bx \in \Omega, 
\end{equation}
where $f(\bx) \in L^2(\Omega)$, $\rho(\bx)$ is a PDF. 
We now consider the PINN loss function as follows

\begin{equation}\label{eq:L2rho_loss}
    \left\{
    \begin{aligned}
         &\mathcal{L}(\bx;\theta) =  \alpha_1\mathcal{L}_{res}(\bx;\theta) + \alpha_2\mathcal{L}_{bou}(\bx;\theta), \\
         &\mathcal{L}_{res}(\bx;\theta) = \Vert r(\bx;\theta)\Vert_{2,\rho,\Omega}^2
         =\int_{\Omega} |r(\bx;\theta)|^2 \rho(\bx) d\bx,\quad \bx \in \Omega,\\
         &\mathcal{L}_{bou}(\bx;\theta)= \Vert b(\bx;\theta)\Vert_{2,\rho,\partial\Omega}^2
         =\int_{\partial\Omega} |b(\bx;\theta)|^2 \rho(\bx) d\bx,\quad \bx \in \partial\Omega.\\
     \end{aligned}
     \right.
\end{equation}
Without loss of generality, in all subsequent derivations, we choose $\alpha_1=\alpha_2=1$. 
We use Monte Carlo method to approximate the residual loss term $\mathcal{L}_{res}(\bx;\theta)$, which is
\begin{equation}\label{eq:L2rho_lossres}
    \begin{aligned}
         \mathcal{L}_{res}(\bx;\theta)
         = \int_{\Omega} |r(\bx;\theta)|^2 \rho(\bx) d\bx
         = \mathbb{E}_{\bx \sim \rho} \left[|r(\bx;\theta)|^2 \right]
         \approx\frac{1}{M_{r}}\sum_{i = 1}^{M_{r}} |r(\bx_i;\theta)|^2,
         \quad \bx \sim \rho(\bx).
     \end{aligned}
\end{equation}
The discrete form of the $\Vert  \cdot \Vert_{2,\rho}$ norm is defined as

\begin{equation}\label{eq:eq:L_2rho_discrete}
    \Vert  r(\bx;\theta) \Vert_{2,\rho,M_r}^2 = \frac{1}{M_{r}}\sum_{i = 1}^{M_{r}} |r(\bx_i;\theta)|^2, \quad \bx \sim \rho(\bx).
\end{equation}

Now we will prove that $\Vert  r(\bx;\theta) \Vert_{2,\rho_{MM},M_r}^2 \leq \Vert  r(\bx;\theta) \Vert_{2,U,M_r}^2$.
Without loss of generality, we divide the computational domain $\Omega$ into two parts based on the complexity of the function, i.e., $\Omega = \Omega_1 +\Omega_2$,  $k_1 < k_2$, which denote the complexity of $\Omega_1$ and $\Omega_2$ respectively. We suppose $\vert \Omega_1 \vert = \int_{\Omega_1} d\bx > \vert \Omega_2 \vert =\int_{\Omega_2} d\bx$ and $M_r$ is the total number of sampling points in $\Omega$. If the sampling points are chosen from the uniform distribution, the numbers of sampling points in $\Omega_1$ and $\Omega_2$ are $\frac{b_1}{2} M_r$ and $\frac{b_2}{2}M_r \in \mathbb{N^+}$ respectively, where $b_1 + b_2 =2$. After training by MMPDE-Net, the number of sampling points in the two subregions changes to $\frac{a_1}{2} M_r$ and $\frac{a_2}{2}M_r \in \mathbb{N^+}$, respectively, where $a_1 + a_2 =2$.

\begin{theorem}
\label{thm:1}
Suppose that there exists $\hat{\theta}$ such that $\Vert  r(\bx;\hat{\theta}) \Vert_{2,\rho_{MM},M_r}^2 = \mathbb{E}_\theta\left[\Vert  r(\bx;\theta) \Vert_{2,\rho_{MM},M_r}^2\right]$, under Assumption \ref{asu:F} and \ref{asu:rho}, we have

$$ \Vert  r(\bx;\hat{\theta}) \Vert_{2,\rho_{MM},M_{r}}^2 \leq \Vert  r(\bx;\hat{\theta}) \Vert_{2,U,M_{r}}^2.$$


\end{theorem}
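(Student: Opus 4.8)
The plan is to pass from the random discrete norms to deterministic quantities, decompose them region by region, and then reduce the desired inequality to an elementary convexity statement about how the sampling budget is split between $\Omega_1$ and $\Omega_2$.

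First I would use the hypothesis on $\hat{\theta}$ to replace each discrete norm by its $\theta$-expectation: by assumption $\Vert r(\bx;\hat{\theta})\Vert_{2,\rho,M_r}^2=\mathbb{E}_\theta[\Vert r(\bx;\theta)\Vert_{2,\rho,M_r}^2]$, and by Definition \ref{de:F} the right-hand side is $\mathcal{F}(k,M_r,M_r)$ for the relevant distribution $\rho$. Applying this once with $\rho=U$ and once with $\rho=\rho_{MM}$ turns the claim into a comparison of two expectations. Remark \ref{rem:mk_F} then splits each expectation across the two subregions of differing complexity: with the uniform allocation $\tfrac{b_1}{2}M_r$ points on $\Omega_1$ and $\tfrac{b_2}{2}M_r$ on $\Omega_2$, and the MMPDE allocation $\tfrac{a_1}{2}M_r,\tfrac{a_2}{2}M_r$, the target becomes
\[
\mathcal{F}(k_1,\tfrac{a_1}{2}M_r,M_r)+\mathcal{F}(k_2,\tfrac{a_2}{2}M_r,M_r)\le \mathcal{F}(k_1,\tfrac{b_1}{2}M_r,M_r)+\mathcal{F}(k_2,\tfrac{b_2}{2}M_r,M_r).
\]

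Next I would extract the scaling structure from Assumption \ref{asu:F}. Taking $k=1$ and $M=1$ as references, the two homogeneity relations force $\mathcal{F}(k,M,M_r)=C(M_r)\,k^p M^{-q}$ for some $C(M_r)>0$. Substituting and cancelling the common factor $C(M_r)(M_r/2)^{-q}$ collapses the inequality to $\tfrac{k_1^p}{a_1^q}+\tfrac{k_2^p}{a_2^q}\le \tfrac{k_1^p}{b_1^q}+\tfrac{k_2^p}{b_2^q}$ subject to $a_1+a_2=b_1+b_2=2$. I would then analyse $\phi(t)=k_1^p t^{-q}+k_2^p (2-t)^{-q}$ on $(0,2)$: it is strictly convex, its unique minimizer $t^\ast$ satisfies $t^\ast/(2-t^\ast)=(k_1/k_2)^{p/(q+1)}<1$ since $k_1<k_2$, hence $t^\ast<1$; and because $|\Omega_1|>|\Omega_2|$ forces the uniform split to have $b_1>1$, we obtain $t^\ast<1<b_1$.

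Finally, since the monitor function is large on the complex region $\Omega_2$, MMPDE-Net transports points out of $\Omega_1$ into $\Omega_2$, so $a_1<b_1$; invoking Assumption \ref{asu:rho} (the MMPDE loss is driven to its minimizer, the equidistributing mesh) I would argue that the moved allocation lands in $[t^\ast,b_1]$, on which $\phi$ is nondecreasing by convexity, giving $\phi(a_1)\le\phi(b_1)$, i.e. the reduced inequality. The main obstacle I anticipate is exactly this last link: Assumptions \ref{asu:F}--\ref{asu:rho} guarantee that MMPDE-Net minimizes its own equidistribution loss, but they do not literally identify that minimizer with the loss-optimal allocation $t^\ast$, nor rule out an overshoot with $a_1<t^\ast$. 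Bridging ``equidistribution of the monitor function'' and ``$a_1\in[t^\ast,b_1]$'' is the delicate point; I would close it either by the monotone-direction argument ($a_1<b_1$ together with $t^\ast<b_1$ and convexity suffices once overshoot past the mirror point of $b_1$ is excluded) or by strengthening the hypotheses so that the MMPDE minimizer coincides with $t^\ast$.
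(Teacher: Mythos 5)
Your proof follows essentially the same route as the paper's: the same region-wise decomposition via Definition \ref{de:F}, Remark \ref{rem:mk_F} and Assumption \ref{asu:F}, the same reduction to the scalar inequality $\frac{k_1^p}{a_1^q}+\frac{k_2^p}{a_2^q}\le\frac{k_1^p}{b_1^q}+\frac{k_2^p}{b_2^q}$ with $a_1+a_2=b_1+b_2=2$, and an equivalent analysis of when it holds (the paper studies the increasing function $h(x)=\frac{x^q((2-x)^q/b_2^q-1)}{(2-x)^q(1-x^q/b_1^q)}$ on $(0,b_1)$, and the condition $h(a_1)\ge(k_1/k_2)^p$ is exactly your $\phi(a_1)\le\phi(b_1)$). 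The delicate point you flag at the end is resolved in the paper precisely by the move you anticipate: it only establishes the \emph{existence} of an admissible $x^*\in(0,b_1)$ and then invokes Assumption \ref{asu:rho} together with the iteration of Section \ref{sec:MMPDE-Net-Iterations} to assert that the algorithm can set $a_1=x^*$, rather than proving the actual MMPDE-Net allocation cannot overshoot.
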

\textcolor{black}{
\begin{proof}
See Appendix.
\end{proof}
}

Since $\mathcal{A}$ and $\mathcal{B}$ in Eq \eqref{eq:gen_PDE} are supposed to be linear operators, similar as in \cite{das-pinns}, the following assumption is given, which is the stability bound.

\begin{assumption}
    \label{asu:Normrelations}
  Let  $\mathbb{H}$ be a Hilbert space, the following assumption holds
    \begin{equation}\label{eq:Normrelations}
        C_1 \Vert \bu \Vert_{2,\rho} \leq \Vert \mathcal{A}(\bu) \Vert_{2,\rho,\Omega} +  \Vert \mathcal{B}(\bu) \Vert_{2,\rho,\partial\Omega} \leq C_2 \Vert \bu \Vert_{2,\rho}, \  \forall \bu \in \mathbb{H},
    \end{equation}
    where $\mathbb{H}$ is defined on $\Omega$, $C_1$ and $C_2$ are positive constants.
    
\end{assumption}

In order to give an upper bound of the error, we need to introduce the the Rademacher complexity.

\begin{definition}\label{de:Rademacher}
    Let $X := \{x_i\}_{i=1}^{n}$ be a collection of i.i.d. random samples, the Rademacher complexity of the function class $\mathcal{F}$ can be defined as 
    $$\mathcal{R}_n(\mathcal{F}) = \mathbb{E}_{X,\epsilon}\left( \sup_{f \in \mathcal{F}} \vert \frac{1}{n} \sum_{i=1}^{n} \epsilon_i f(x_i)\vert \right),$$
    where $(\epsilon_1,\epsilon_2,...,\epsilon_n)$ are i.i.d. Rademacher variables, i.e., $P(\epsilon_i=1) =  P(\epsilon_i=-1)=0.5 $.

\end{definition}

Let $\bu^*$ be the exact solution of Eq \eqref{eq:gen_PDE}, same as in \cite{shin2023error}, it is reasonable to assume that
\begin{assumption}
    \label{asu:r-boundary}
    There exists a positive number $0 < b \in \mathbb{R}$ such that
    \begin{equation}\label{eq:r-boundary}
    \sup_{i} \Vert  \mathcal{A}[\bu_i(\bx)] - f(x) \Vert_{\infty} = b 
    \end{equation}
     where $\left\{\bu_i(\bx)\right\}_{i\geq1} \subset \mathbb{H}$  is a sequence of approximate solutions , i.e., $\lim\limits_{i \to +\infty} \Vert  \bu_i(\bx) - \bu^*(\bx) \Vert_{2,\rho_{MM}} = 0$.     
\end{assumption}

Now we consider the approximate solution of the neural networks $\mathcal{F}^{NN}_u = \left\{\bu(\bx;\theta) : \theta \in \mathbb{R}^{NN} \right\}$. Similarly as in \cite{shin2023error}, using Assumption \ref{asu:r-boundary} and the Universal Approximation Theorem (\cite{hornik1989multilayer},\cite{hornik1990universal}), the following function class can be defined 
$$\mathcal{F}_r  = \left\{ \bu(\bx;\theta): \bu(\bx;\theta) \in \mathbb{H} \cap \mathcal{F}^{NN}_u \ \mbox{and} \ \Vert \mathcal{A}[\bu(\bx;\theta)]-f(\bx)  \Vert_{\infty,\Omega} \leq b \right\},$$ 
which is not empty.



According to \cite{wainwright2019high}, we use Lemma \ref{lem:Uniform laws of large numbers-Rademacher} and propose the following theorem about the Rademacher complexity.
\begin{lemma}
    \label{lem:Uniform laws of large numbers-Rademacher}
    Let $\mathcal{F}$ be a b-uniformly bounded class of integrable real-valued functions with domain $\Omega$, for any postive integer $n \geq 1$ and any scalar $\delta \geq 0$, we have
    $$\sup_{f \in \mathcal{F}} \left\vert \frac{1}{n} \sum_{i=1}^{n} f(x_i) - \mathbb{E}[f(x)]\right \vert \leq 2\mathcal{R}_n(\mathcal{F}) + \delta, $$
    with probability at least $1-exp\left(-\frac{n\delta^2}{2b^2}\right)$. 
    
\end{lemma}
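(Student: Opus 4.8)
The plan is to follow the classical two-part route to uniform laws via Rademacher complexity, combining a concentration-of-measure argument with a symmetrization step. Write $G(x_1,\dots,x_n) := \sup_{f\in\mathcal{F}} \left| \frac{1}{n}\sum_{i=1}^{n} f(x_i) - \mathbb{E}[f(x)]\right|$ for the quantity of interest, viewed as a function of the i.i.d. sample $\{x_i\}_{i=1}^{n}$. The goal then splits into two independent claims: (i) $G$ concentrates around its mean $\mathbb{E}[G]$ with the stated exponential tail, and (ii) $\mathbb{E}[G] \le 2\mathcal{R}_n(\mathcal{F})$. Adding these yields the lemma.

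First I would establish (i) via the bounded-differences (McDiarmid) inequality. The key observation is that, using the $b$-uniform boundedness $|f|\le b$, replacing a single coordinate $x_k$ by any $x_k'$ changes the empirical average $\frac{1}{n}\sum_i f(x_i)$ by at most $\frac{2b}{n}$ for every fixed $f$; since taking a supremum over $f$ of a quantity that is pointwise Lipschitz in that coordinate with constant $\frac{2b}{n}$ is itself Lipschitz with the same constant, $G$ satisfies the bounded-differences property with constants $c_k = \frac{2b}{n}$. McDiarmid's inequality then gives $P(G - \mathbb{E}[G] \ge \delta) \le \exp\left(-\frac{2\delta^2}{\sum_k c_k^2}\right)$, and the arithmetic check $\sum_k c_k^2 = n\cdot\frac{4b^2}{n^2} = \frac{4b^2}{n}$ shows this equals $\exp\left(-\frac{n\delta^2}{2b^2}\right)$, exactly the failure probability in the statement. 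Hence $G \le \mathbb{E}[G] + \delta$ with probability at least $1 - \exp\left(-\frac{n\delta^2}{2b^2}\right)$.

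Next I would bound $\mathbb{E}[G]$ by symmetrization. Introduce an independent ghost sample $x_1',\dots,x_n'$ with the same law, so that $\mathbb{E}[f(x)] = \mathbb{E}_{x'}\left[\frac{1}{n}\sum_i f(x_i')\right]$. Pulling this inner expectation outside the supremum and absolute value through Jensen's inequality gives $\mathbb{E}[G] \le \mathbb{E}_{x,x'}\sup_{f}\left|\frac{1}{n}\sum_i (f(x_i)-f(x_i'))\right|$. Because each difference $f(x_i)-f(x_i')$ is symmetric under the swap $x_i \leftrightarrow x_i'$, its distribution is unchanged after multiplication by an independent Rademacher sign $\epsilon_i$; inserting these signs and splitting with the triangle inequality bounds the expression by $2\,\mathbb{E}_{x,\epsilon}\sup_{f}\left|\frac{1}{n}\sum_i \epsilon_i f(x_i)\right| = 2\mathcal{R}_n(\mathcal{F})$, matching Definition \ref{de:Rademacher}. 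Combining with (i) completes the argument.

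I expect the main obstacle to be the symmetrization step rather than the concentration step: one must carefully justify the ghost-sample substitution, the Jensen interchange, and above all the legitimacy of inserting the Rademacher variables, which rests on the exchangeability of each pair $(x_i,x_i')$ and hence on the distributional sign-invariance of $f(x_i)-f(x_i')$. By contrast, verifying the McDiarmid constants is routine once $b$-uniform boundedness is invoked, and reconciling the exponent with $\frac{n\delta^2}{2b^2}$ is a direct computation.
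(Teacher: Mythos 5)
Your proposal is correct. The paper does not prove this lemma at all — it is imported verbatim from the cited reference \cite{wainwright2019high} (where it appears as the uniform law via Rademacher complexity, proved exactly by the two steps you describe: McDiarmid's bounded-differences inequality with constants $c_k = \tfrac{2b}{n}$ for the concentration of the supremum around its mean, followed by the ghost-sample symmetrization bound $\mathbb{E}[G]\le 2\mathcal{R}_n(\mathcal{F})$). Your reconstruction matches that standard argument, and the exponent arithmetic $\sum_k c_k^2 = \tfrac{4b^2}{n}$ giving $\exp(-\tfrac{n\delta^2}{2b^2})$ checks out.
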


\begin{theorem}
\label{thm:2}
Suppose that Assumption \ref{asu:Normrelations} and Assumption \ref{asu:r-boundary} hold, let $\bu^*$ be the exact solution of Eq \eqref{eq:gen_PDE}. Given $\delta > 0$, $\{x_i\}_{i=1}^{M_r}$ is a collection of i.i.d. random samples from the PDF $\rho_{MM}$
and $\forall \bu(\bx;\theta) \in \mathcal{F}_r$, the following error estimate holds

\begin{equation}\label{eq:errorestimate}
    \Vert \bu^*(\bx)-\bu(\bx;\theta) \Vert_{2,\rho_{MM}} \leq  \frac{\sqrt{2}}{C_1} \left(\Vert  r(\bx;\theta) \Vert_{2,\rho_{MM},M_r} ^2 + 
     \Vert b(\bx;\theta)\Vert_{2,\rho_{MM},\partial\Omega}^2 + 2\mathcal{R}_{M_r}(\mathcal{F}_r) + \delta \right)^{\frac{1}{2}},
\end{equation}
with probability at least $1-\exp\left(-\frac{M_r\delta^2}{2b^2}\right)$.
\end{theorem}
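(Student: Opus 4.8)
The plan is to split the argument into a deterministic stability step followed by a probabilistic concentration step, and then to combine the two. First I would exploit the linearity of $\mathcal{A}$ and $\mathcal{B}$ together with the fact that $\bu^*$ solves Eq.\ref{eq:gen_PDE} exactly. Since $\mathcal{A}[\bu^*]=f$ and $\mathcal{B}[\bu^*]=g$, linearity gives $\mathcal{A}[\bu^*-\bu(\bx;\theta)] = f-\mathcal{A}[\bu(\bx;\theta)] = -r(\bx;\theta)$, and likewise $\mathcal{B}[\bu^*-\bu(\bx;\theta)] = -b(\bx;\theta)$. Applying the lower inequality of the stability bound (Assumption \ref{asu:Normrelations}) to the error $\bu^*-\bu$ then yields
\[
C_1 \Vert \bu^*-\bu \Vert_{2,\rho_{MM}} \leq \Vert r(\bx;\theta) \Vert_{2,\rho_{MM},\Omega} + \Vert b(\bx;\theta) \Vert_{2,\rho_{MM},\partial\Omega}.
\]

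Next I would fold the sum of the two norms into a single square root of a sum of squares by means of the elementary inequality $a+b\le\sqrt{2}\,(a^2+b^2)^{1/2}$, obtaining
\[
C_1\Vert \bu^*-\bu\Vert_{2,\rho_{MM}} \le \sqrt{2}\,\bigl(\Vert r\Vert_{2,\rho_{MM},\Omega}^2 + \Vert b\Vert_{2,\rho_{MM},\partial\Omega}^2\bigr)^{1/2}.
\]
At this point the residual term is still the continuous norm $\Vert r\Vert_{2,\rho_{MM},\Omega}^2 = \mathbb{E}_{\bx\sim\rho_{MM}}[|r(\bx;\theta)|^2]$, as recorded in Eq.\ref{eq:L2rho_lossres}, whereas the target estimate Eq.\ref{eq:errorestimate} carries its empirical Monte Carlo counterpart $\Vert r\Vert_{2,\rho_{MM},M_r}^2$. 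The boundary term is deliberately left in continuous form, since MMPDE-Net only resamples the interior residual points and no concentration is needed there.

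The probabilistic heart of the proof is then to replace the expectation by the sample average over the MMPDE-Net draws $\{\bx_i\}_{i=1}^{M_r}$. Invoking the uniform law of large numbers (Lemma \ref{lem:Uniform laws of large numbers-Rademacher}) applied to the function class $\mathcal{F}_r$, which is $b$-uniformly bounded by Assumption \ref{asu:r-boundary}, and retaining only the one-sided direction I need, I would conclude that with probability at least $1-\exp(-M_r\delta^2/2b^2)$,
\[
\mathbb{E}_{\bx\sim\rho_{MM}}[|r(\bx;\theta)|^2] \le \frac{1}{M_r}\sum_{i=1}^{M_r}|r(\bx_i;\theta)|^2 + 2\mathcal{R}_{M_r}(\mathcal{F}_r) + \delta = \Vert r(\bx;\theta)\Vert_{2,\rho_{MM},M_r}^2 + 2\mathcal{R}_{M_r}(\mathcal{F}_r)+\delta
\]
holds simultaneously over the whole class. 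Substituting this for the continuous residual term and dividing by $C_1$ then reproduces Eq.\ref{eq:errorestimate} exactly.

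The linearity manipulation and the $a+b\le\sqrt{2}(a^2+b^2)^{1/2}$ step are routine; the step that demands the most care is the concentration argument. In particular I must use the supremum over $\mathcal{F}_r$ in Lemma \ref{lem:Uniform laws of large numbers-Rademacher} precisely so that the final bound holds for \emph{every} $\bu(\bx;\theta)\in\mathcal{F}_u$ at once rather than for a single fixed $\theta$, keep track that the relevant side of the two-sided inequality is the one bounding $\mathbb{E}[|r|^2]$ from above, and verify that the probability $1-\exp(-M_r\delta^2/2b^2)$ carries through unchanged because every other step is deterministic.
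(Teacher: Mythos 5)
Your proposal is correct and follows essentially the same route as the paper's own proof: the stability bound of Assumption \ref{asu:Normrelations} applied to $\bu^*-\bu$ combined with linearity to identify $\mathcal{A}[\bu^*-\bu]$ with $-r$ and $\mathcal{B}[\bu^*-\bu]$ with $-b$, the elementary inequality $a+b\le\sqrt{2}(a^2+b^2)^{1/2}$, and then the one-sided application of Lemma \ref{lem:Uniform laws of large numbers-Rademacher} to trade the population residual norm for its empirical counterpart plus $2\mathcal{R}_{M_r}(\mathcal{F}_r)+\delta$ at the stated confidence level. Your added remarks on why the boundary term stays in continuous form and why the uniformity over $\mathcal{F}_r$ is needed are consistent with, and slightly more explicit than, the paper's presentation.
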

\textcolor{black}{
\begin{proof}
See Appendix.
\end{proof}
}

\begin{corollary}\label{cor:1}
    Suppose that Theorem \ref{thm:1} and Theorem \ref{thm:2} hold, then 
    \begin{equation}\label{eq:errorestimate_cor}
    \begin{aligned}
     \Vert \bu^*(\bx)-\bu(\bx;\hat{\theta}) \Vert_{2,\rho_{MM}}
     &\leq \frac{\sqrt{2}}{C_1} \left(\Vert  r(\bx;\hat{\theta}) \Vert_{2,\rho_{MM},M_r} ^2 + 
     \Vert b(\bx;\hat{\theta})\Vert_{2,\rho_{MM},\partial\Omega}^2 + 2\mathcal{R}_{M_r}(\mathcal{F}_r) + \delta \right)^{\frac{1}{2}} \\
     &\leq \frac{\sqrt{2}}{C_1} \left(\Vert  r(\bx;\hat{\theta}) \Vert_{2,U,M_r} ^2 + 
     \Vert b(\bx;\hat{\theta})\Vert_{2,U,\partial\Omega}^2 + 2\mathcal{R}_{M_r}(\mathcal{F}_r) + \delta \right)^{\frac{1}{2}},
     \end{aligned} 
\end{equation}
    with probability at least $1-\exp\left(-\frac{M_r\delta^2}{2b^2}\right)$.
\end{corollary}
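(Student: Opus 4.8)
The plan is to chain Theorem~\ref{thm:2} and Theorem~\ref{thm:1} together, since the corollary is nothing more than inserting the residual comparison of the latter into the error bound of the former; no new estimate is required. The whole content is a composition of the two preceding results, so I would organize the argument as a term-by-term passage on the right-hand side of the bound.

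First I would invoke Theorem~\ref{thm:2} with the particular parameter $\hat{\theta}$. This is legitimate because Theorem~\ref{thm:2} is stated for every $\bu(\bx;\theta) \in \mathcal{F}_u$, and the $\hat{\theta}$ supplied by the hypothesis of Theorem~\ref{thm:1} indexes an element of that class. This yields verbatim the first inequality of \eqref{eq:errorestimate_cor}, together with its confidence level $1-\exp(-\tfrac{M_r\delta^2}{2b^2})$. Next I would pass from the $\rho_{MM}$-sampled quantities to the $U$-sampled ones. For the residual term, Theorem~\ref{thm:1} gives directly $\Vert r(\bx;\hat{\theta}) \Vert_{2,\rho_{MM},M_r}^2 \leq \Vert r(\bx;\hat{\theta}) \Vert_{2,U,M_r}^2$. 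For the boundary term, I would observe that MMPDE-Net performs no resampling of boundary points (as stressed at the opening of Section~\ref{sec:Error estimation}, and as enforced by the condition $x=\xi,\,y=\eta$ on $\partial\Omega$ in \eqref{eq:MMPDE_2D_version2}), so the boundary collocation set is identical under both regimes and hence $\Vert b(\bx;\hat{\theta})\Vert_{2,\rho_{MM},\partial\Omega}^2 = \Vert b(\bx;\hat{\theta})\Vert_{2,U,\partial\Omega}^2$. The Rademacher term $2\mathcal{R}_{M_r}(\mathcal{F}_r)$ and the slack $\delta$ are untouched.

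Finally, since the map $t \mapsto \tfrac{\sqrt{2}}{C_1}\bigl(t + c\bigr)^{1/2}$ is monotonically increasing for any fixed constant $c \geq 0$, replacing the residual term by the larger $U$-sampled quantity preserves the inequality and produces the second line of \eqref{eq:errorestimate_cor}. The probability statement is inherited directly from Theorem~\ref{thm:2}: the randomness enters only through the Rademacher/concentration step (Lemma~\ref{lem:Uniform laws of large numbers-Rademacher}), whereas Theorem~\ref{thm:1} contributes a purely deterministic inequality valid under Assumptions~\ref{asu:F} and \ref{asu:rho}, so it does not degrade the confidence level $1-\exp(-\tfrac{M_r\delta^2}{2b^2})$.

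The only point requiring genuine care — and the likely obstacle — is the bookkeeping around the probability event and the boundary term. One must confirm that the event on which Theorem~\ref{thm:2} holds is exactly the event carried into the final statement (so that the two inequalities are chained on the same sample realization rather than on two separate high-probability events), and one must justify treating the boundary contribution as invariant under the coordinate transformation. Both are mild, but they are the places where the argument could be made rigorous rather than merely formal.
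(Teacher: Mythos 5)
Your proposal is correct and follows essentially the same route as the paper: apply Theorem \ref{thm:2} at $\hat{\theta}$, replace the residual term via Theorem \ref{thm:1}, and note that the boundary term is unchanged because MMPDE-Net leaves $\partial\Omega$ fixed. Your added remarks on monotonicity of the square root and on the probability event being inherited unchanged (since Theorem \ref{thm:1} is deterministic) are correct refinements that the paper leaves implicit.
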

\textcolor{black}{
\begin{proof}
See Appendix.
\end{proof}
}
It is observed from Corollary \ref{cor:1} that 
the upper bound of the error in $\Vert \cdot \Vert_{2,\rho_{MM}}$ can be reduced by MMPDE-Net with probability at least $1-\exp\left(-\frac{M_r\delta^2}{2b^2}\right)$.


\section{Numerical Experiments}
\label{sec:Numericalsurvey}

In this section, numerical experiments are presented to verify our method. A  two-dimensional Poisson equation with low regularity, which has one peak is discussed in Section \ref{sec:OnePeak_2D}. We compare our method with other methods and show the performance of MS-PINN with iterative MMPDE-Net. 
In Section \ref{sec:DoublePeaks_2D}, numerical solutions of two-dimensional Poisson equation with two peaks are presented.
Numerical results in Section \ref{sec:Burgers_1D} show that our algorithm is effective for solving Burgers equation, whose solution has large gradient, including both forward (Section\ref{sec:Burgers_1D_Forward}) and inverse (Section \ref{sec:Burgers_1D_Inverse}) problems. The last example in Section \ref{sec:Burgers_2D} is the two-dimensional Burgers problem, which has shock wave solution in finite time.

\subsection{Symbols and parameter settings}
\label{sec:Symbols and parameter settings}
The analytical solution or reference solution is denoted by $u^*$.
The $u^{\text{PINN}}$ and $u^{\text{MS-PINN}}$ are used to denote the approximate solutions obtained by PINN and MSPINN respectively. 
According to Eq \eqref{eq:eq:L_2rho_discrete}, the relative errors in the test set $\left\{\bx_i\right\}_{i=1}^{M_t}$ sampled from the equispaced uniform distribution are defined as follows
\begin{equation}
    \label{eq:measure}
    \hspace{-0.3cm}
    \begin{array}{r@{}l}
        \begin{aligned}
            e_\infty(u) &= \frac{\Vert u^* - u\Vert_{\infty,U,M_{t}}}{\Vert u^*\Vert_{\infty,U,M_{t}}},\\
            e_2(u) &= \frac{\Vert u^* - u\Vert_{2,U,M_{t}}}{\Vert u^*\Vert_{2,U,M_{t}}}.
        \end{aligned}
    \end{array}
\end{equation}
Parameter settings in MS-PINN are given in Table \ref{tab:parameter-MMPDENet} if we do not specify otherwise. For 
Poisson equation and forward Burgers equation, the Adam method is used to do the optimization. For the inverse Burgers equation and two dimensional Poisson equation, the Adam method is implemented firstly and then the LBFGS method is used to optimize the loss function. Since there is three stages in MS-PINN, we have given the training epochs in two stages in Table \ref{tab:parameter-MMPDENet}, while the training epochs in MMPDE-Net is specified in each case. All simulations are carried on NVIDIA A100(80G).



\begin{table}
	\centering
	\caption{Default settings for main parameters in MS-PINN.}
	\label{tab:parameter-MMPDENet} 
	\setlength{\tabcolsep}{1.mm}{
		\begin{tabular}{|c|c|c|c|c|c|}
			\hline\noalign{\smallskip}
			 Torch  & Activation &  Initialization  & Optimization& Learning&  Eq \eqref{eq:MMPDE_loss}  \\
			 Version&  Function  &         Method   &       Method &  Rate & $\tau$ \\
			\hline
			2.0.1 &tanh  & Xavier & Adam/LBFGS & 0.0001/1 &0.1\\
			\hline
            MMPDE-Net &  Poisson & Burgers   & MMPDE-Net&Pre-     &  Formal \\
            Size      & PINN Size& PINN Size & Training&Training &  Training\\
            (neurons/layer  & (neurons/layer  & (neurons/layer & (epochs)&  (epochs) & (epochs)\\
            $\times$ hidden layers) & $\times$ hidden layers) & $\times$ hidden layers) &   &  &\\
            \hline
            $20 \times 8$  & $40 \times 4$&$20 \times 8$&20000 &20000 & 40000 \\
            \hline
		\end{tabular}
	}
\end{table}
 
\subsection{Two-dimensional Poisson equation with one peak}
\label{sec:OnePeak_2D}
For the following two-dimensional Poisson equation
\begin{equation}
	\label{eq:2d_Poisson}
	\hspace{-0.3cm}
	\begin{array}{r@{}l}
		\left\{
		\begin{aligned}
			 -\Delta u(x,y) & = f(x,y), \quad (x,y) \ \mbox{in} \ \Omega, \\
                  u(x,y) & = g(x,y),  \quad  (x,y) \ \mbox{on} \  \partial \Omega,
		\end{aligned}
		\right.
	\end{array}
\end{equation}
where $\Omega = (-1,1)^2$,  the analytical solution with one peak at $(0,0)$ is chosen as
\begin{equation}
    \label{eq:2d_Peaksolution}
    \hspace{-0.3cm}
    \begin{array}{r@{}l}
        \begin{aligned}
            u = e^{-1000(x^2+y^2)}.
        \end{aligned}
    \end{array}
\end{equation}
The Dirichlet boundary condition $g(x,y)$ on $\partial \Omega$ and function $f(x,y)$ are given by the analytical solution.
\subsubsection{MS-PINN}
As shown in Fig \ref{fig:MMPDE-PINN}, we firstly use PINN for pre-training to obtain the preliminary solution information, secondly input it into MMPDE-Net together with the training points, finally input the new training points obtained from the second stage into PINN for the formal training. The monitor function in MMPDE-Net for Eq \eqref{eq:2d_Poisson} is chosen as 

\begin{equation}
    \label{eq:2d_monitorfunction}
    \hspace{-0.3cm}
    \begin{array}{r@{}l}
        \begin{aligned}
            w = \sqrt{1+ 1000u^2 + \lvert \nabla u \rvert^2}.
        \end{aligned}
    \end{array}
\end{equation}

In order to compare our method with other five methods, we sample $100\times100$ points in $\Omega$ and 1000 points on $\partial \Omega$ as the training set and $400\times400$ points as the test set for all the methods.
In MS-PINN, we train  MMPDE-Net 20000 epochs, and the epochs of pre-training formal PINN training are shown in Table \ref{tab:parameter-MMPDENet}. Five different methods are implemented: (1) PINN that samples $100 \times 100$ uniformly distributed points and trains 60000 epochs; 
(2) PINN-RAR method that pre-trains 20000 epochs using $90 \times 90$ training points, then 1900 more training points are added by RAR method (\cite{lu2021deepxde},\cite{wu2023comprehensive}) and trains 40000 epochs; 
(3) PINN-RAD method that pre-trains 20000 epochs using $100 \times 100$ training points, then 10000 points are adaptively sampled by RAD method (\cite{wu2023comprehensive},\cite{nabian2021efficient},\cite{zapf2022investigating}) and trains 40000 epochs; 
\textcolor{black}{(4) PINN-ES method that pre-trains 20000 epochs using $100 \times 100$ training points, then 10000 points are adaptively sampled by evolutionary sampling method \cite{daw2022rethinking}  and trains 40000 epochs; 
(5) DAS-PINN method that pre-trains 20000 epochs using $90 \times 90$ training points, then 1900 points are added by DAS-G method \cite{das-pinns} and trains 40000 epochs. }

\begin{figure}[htbp]
\centering
\subfloat[analytical solution]{\includegraphics[width = 0.30\textwidth]{./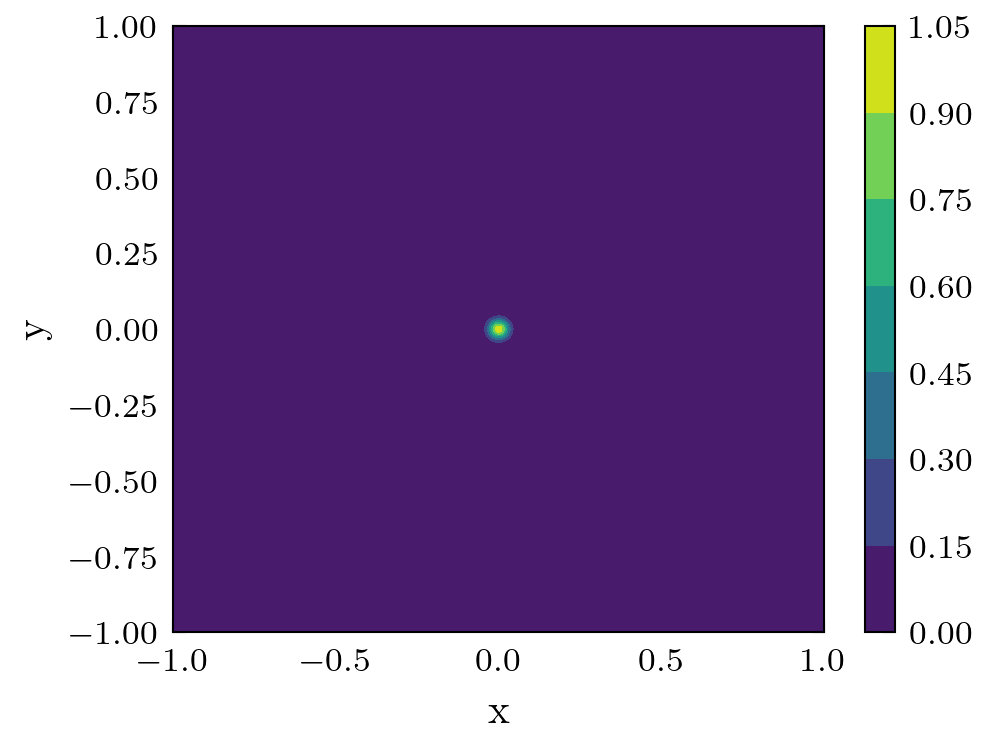}}
\subfloat[approximate solution]{\includegraphics[width = 0.30\textwidth]
{./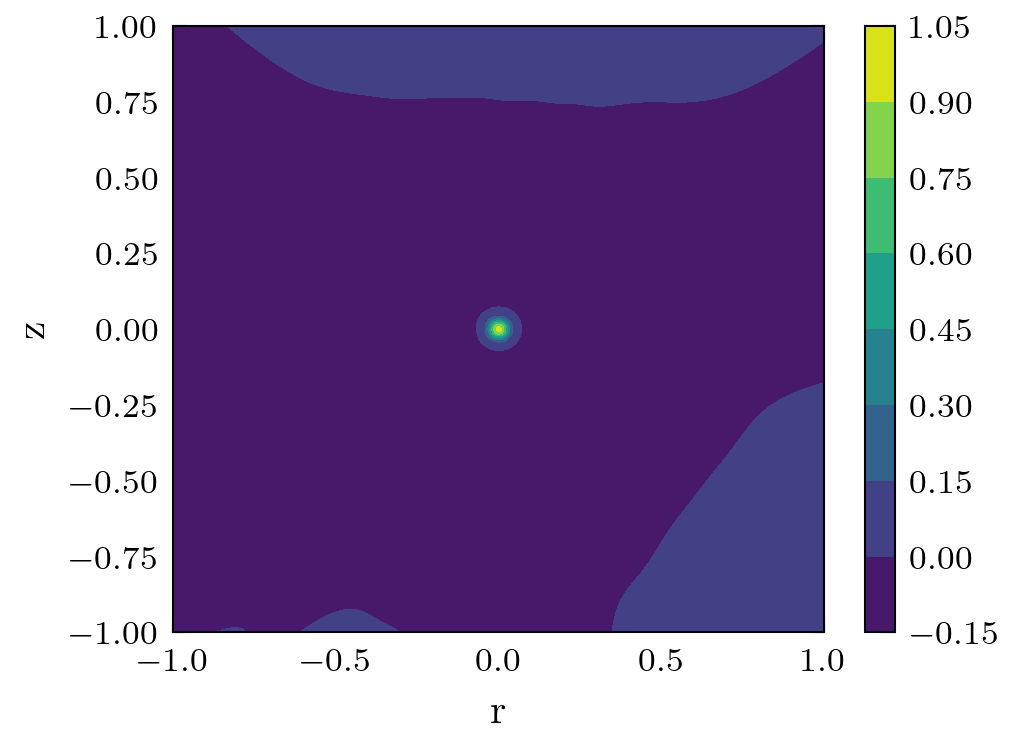}}
\subfloat[absolute error]{\includegraphics[width = 0.30\textwidth]{./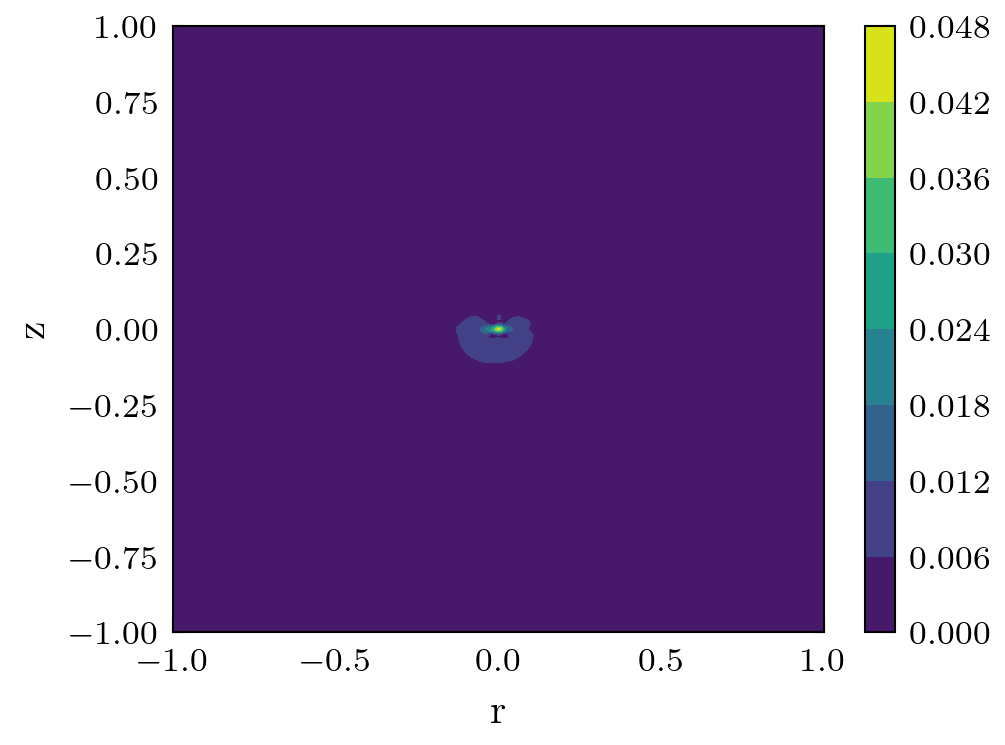}}
\caption{The result of PINN for the two-dimensional Poisson equation with one peak. (a) the analytical solution $u^*$ of the equation; (b) the approximate solution $u^{\text{PINN}}$ of PINN; (c) the absolute error $\vert u^* - u^{\text{PINN}} \vert$.}
\label{fig:PINN_Peak2D}
\end{figure}

\begin{figure}[htbp]
\centering
\subfloat[training points sampled by RAR]{\includegraphics[width = 0.27\textwidth]{./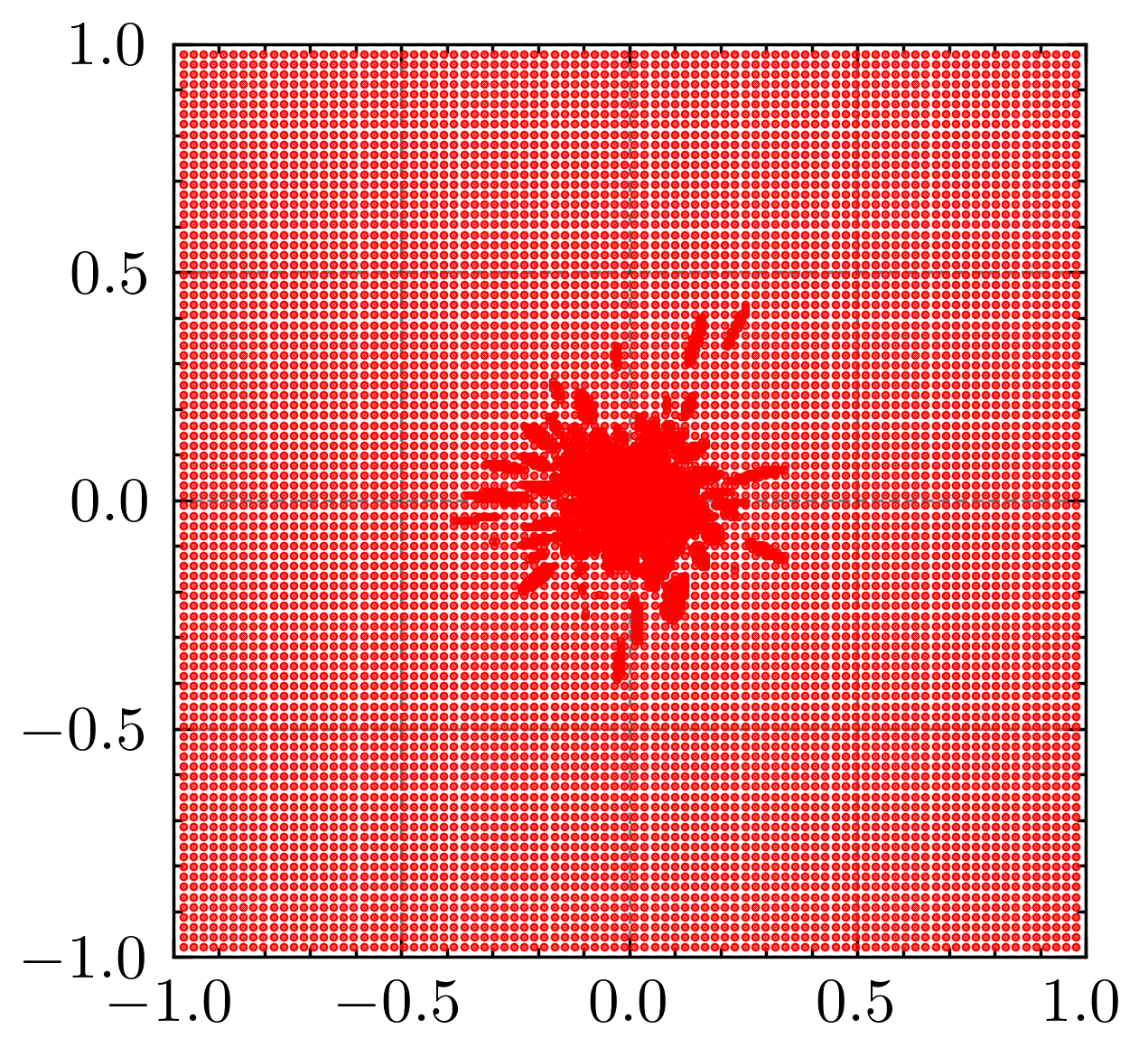}}
\subfloat[approximate solution]{\includegraphics[width = 0.31\textwidth]
{./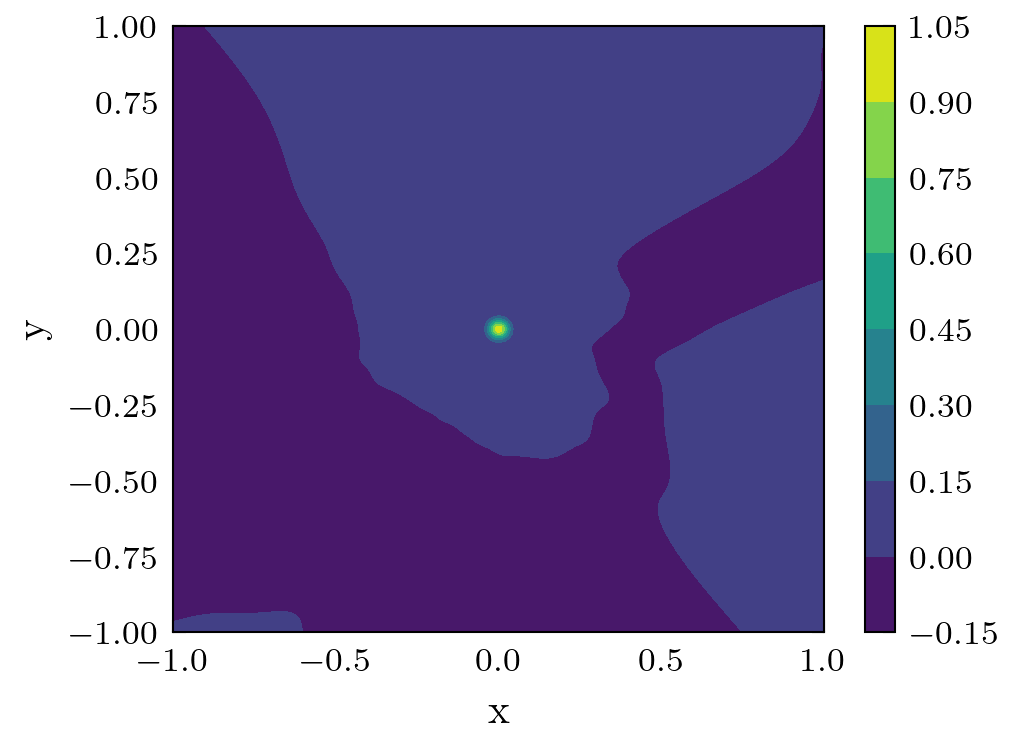}}
\subfloat[absolute error]{\includegraphics[width = 0.31\textwidth]{./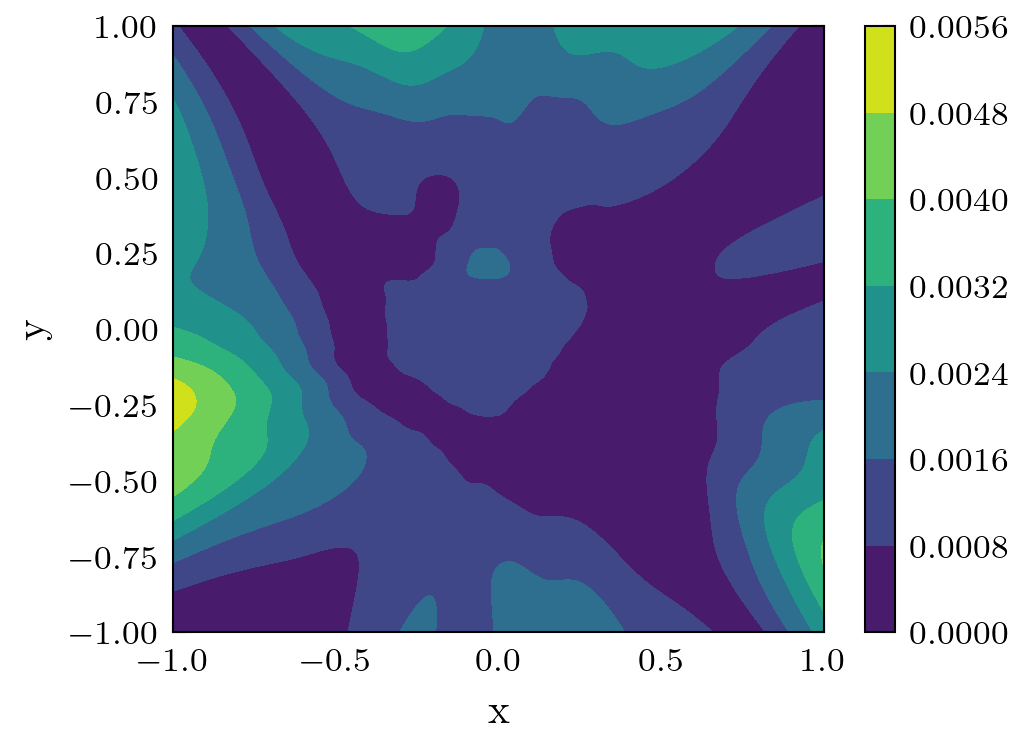}}
\caption{The result of PINN-RAR for the two-dimensional Poisson equation with one peak. (a) training points sampled by RAR method; (b) the approximate solution $u^{\text{PINN-RAR}}$; (c) the absolute error $\vert u^* - u^{\text{PINN-RAR}} \vert$.}
\label{fig:Peak2D_RAR}
\end{figure}

\begin{figure}[htbp]
\centering
\subfloat[training points sampled by RAD]{\includegraphics[width = 0.27\textwidth]{./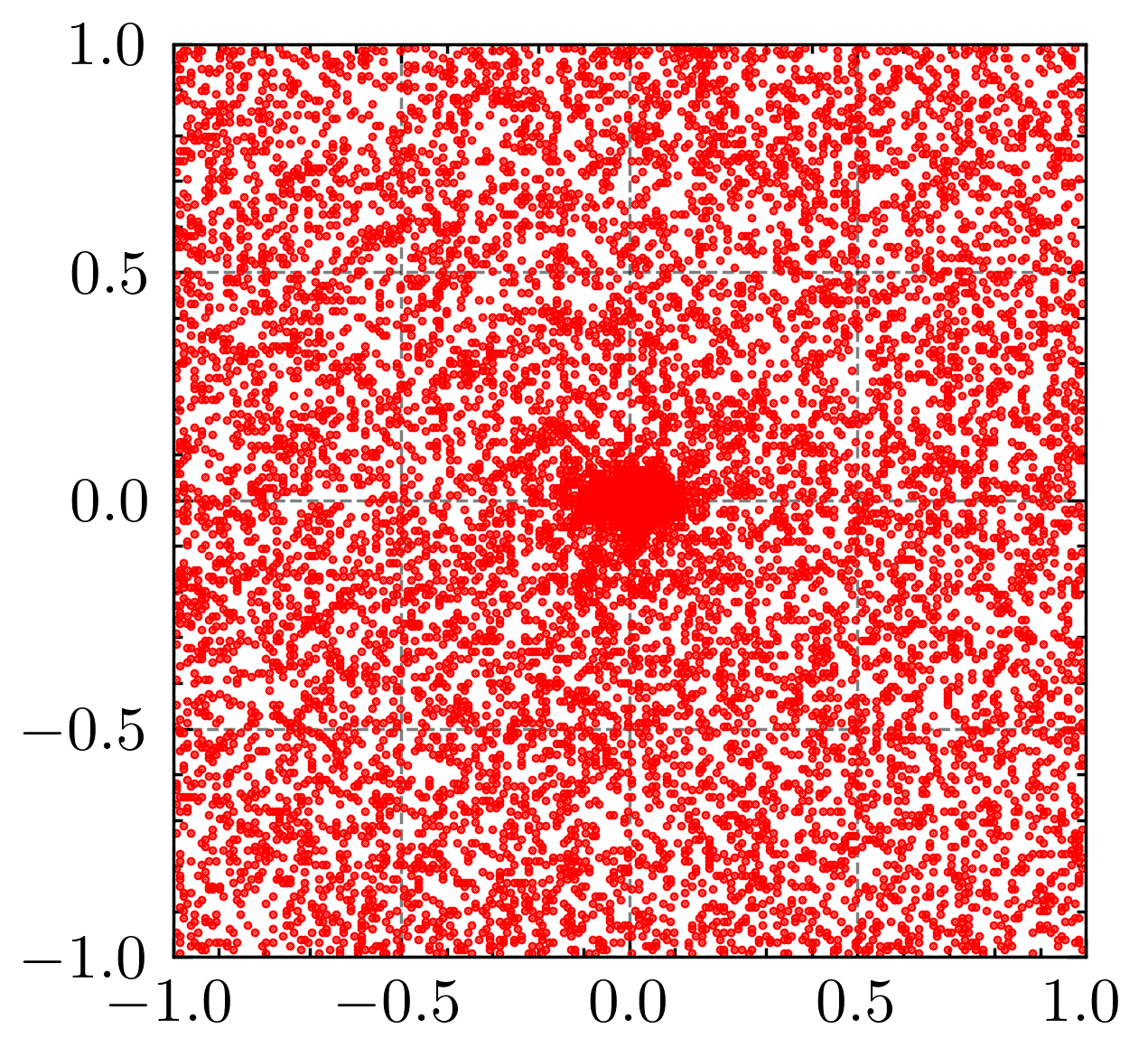}}
\subfloat[approximate solution]{\includegraphics[width = 0.31\textwidth]
{./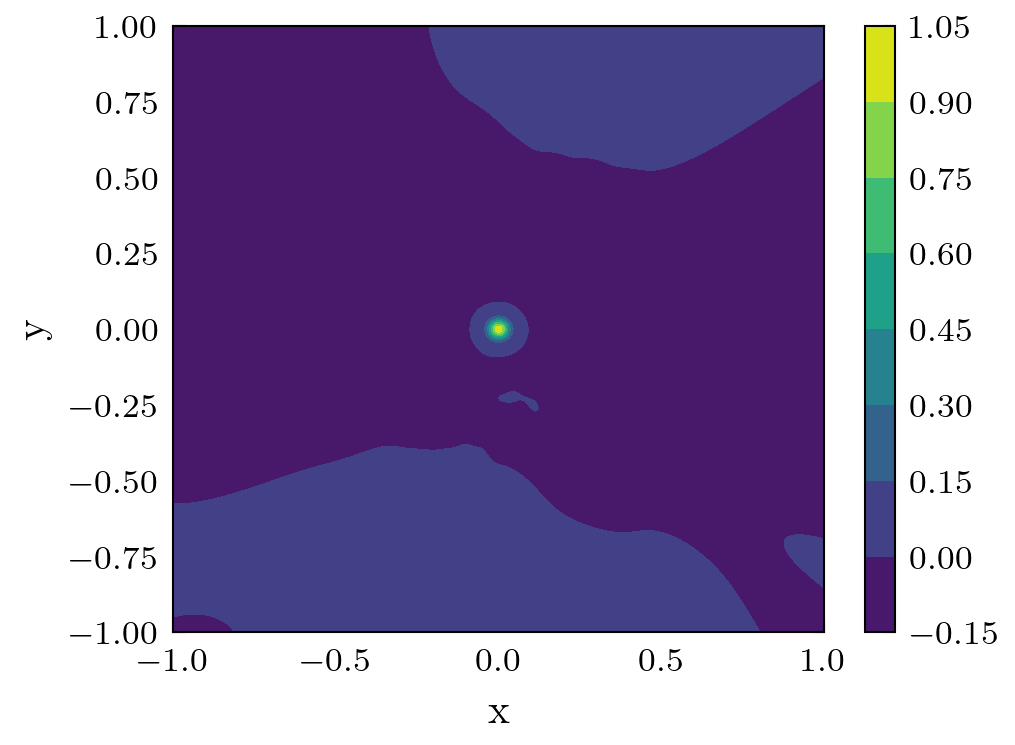}}
\subfloat[absolute error]{\includegraphics[width = 0.31\textwidth]{./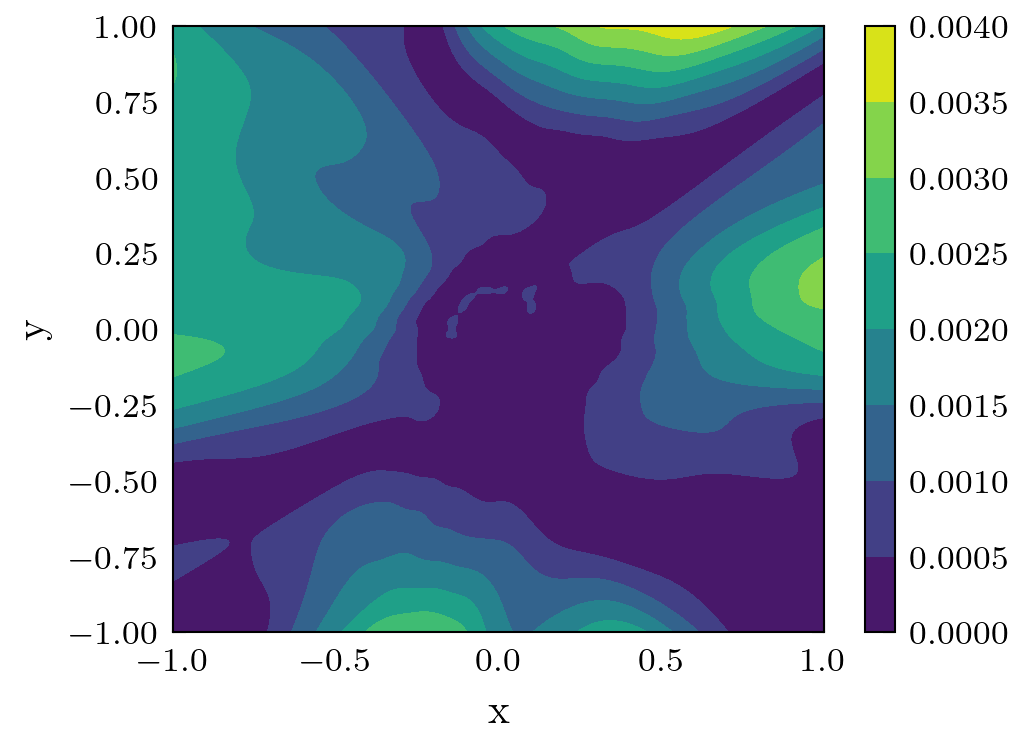}}
\caption{The result of PINN-RAD for the two-dimensional Poisson equation with one peak. (a) training points sampled by RAD method (k=1,c=1); (b) the approximate solution $u^{\text{PINN-RAD}}$; (c) the absolute error $\vert u^* - u^{\text{PINN-RAD}} \vert$.}
\label{fig:Peak2D_RAD}
\end{figure}

\begin{figure}[htbp]
\centering
\subfloat[training points sampled by ES]{\includegraphics[width = 0.26\textwidth]{./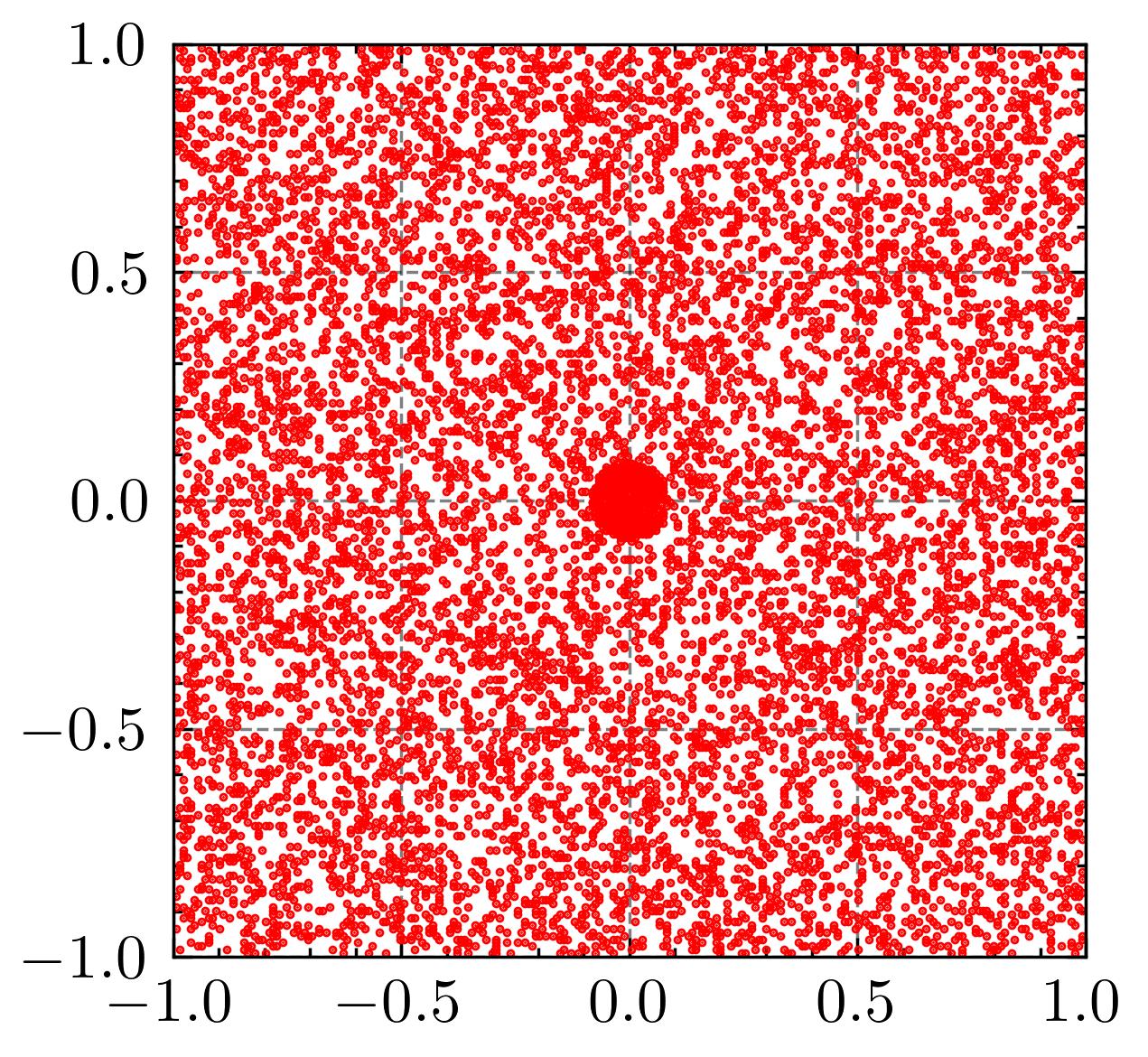}}
\subfloat[approximate solution]{\includegraphics[width = 0.31\textwidth]
{./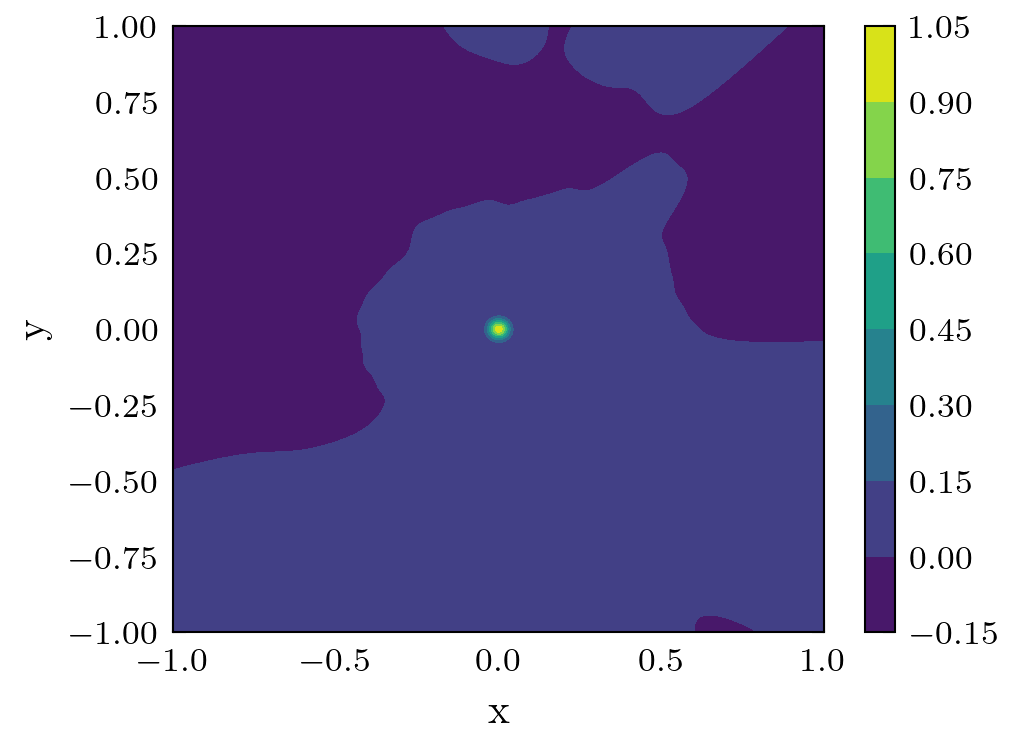}}
\subfloat[absolute error]{\includegraphics[width = 0.31\textwidth]{./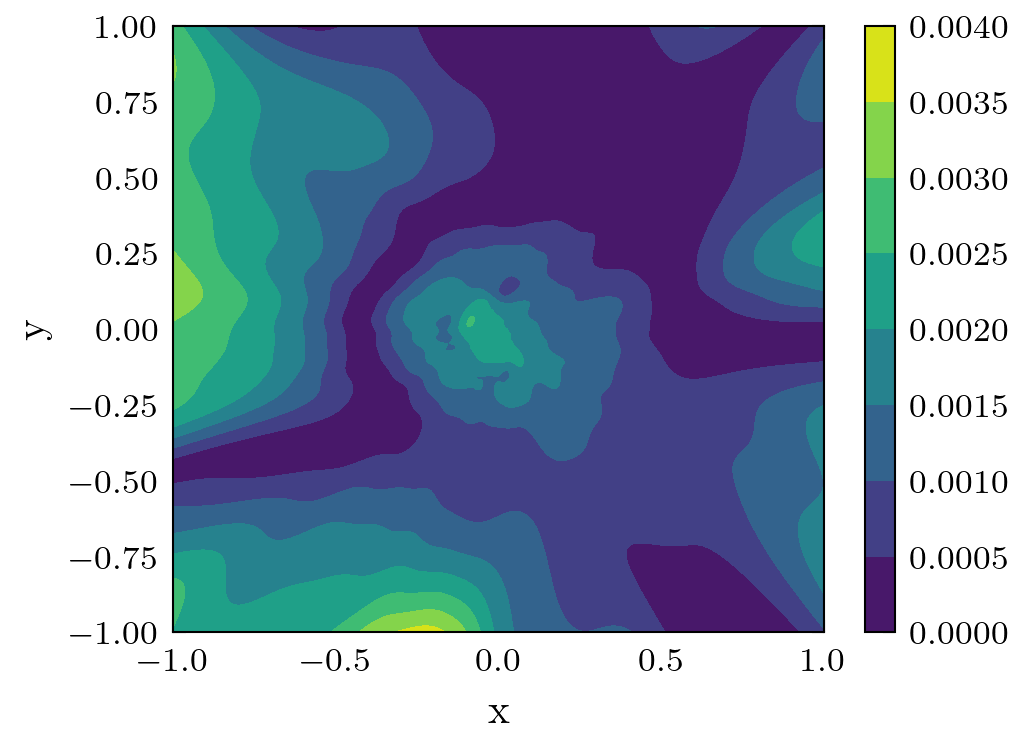}}
\caption{ \textcolor{black}{The result of PINN-ES for the two-dimensional Poisson equation with one peak. (a) training points sampled by evolutionary sampling method (evolved 10 times); (b) the approximate solution $u^{\text{PINN-ES}}$; (c) the absolute error $\vert u^* - u^{\text{PINN-ES}} \vert$.}}
\label{fig:Peak2D_ES}
\end{figure}

\begin{figure}[htbp]
\centering
\subfloat[training points sampled by DAS-G]{\includegraphics[width = 0.26\textwidth]{./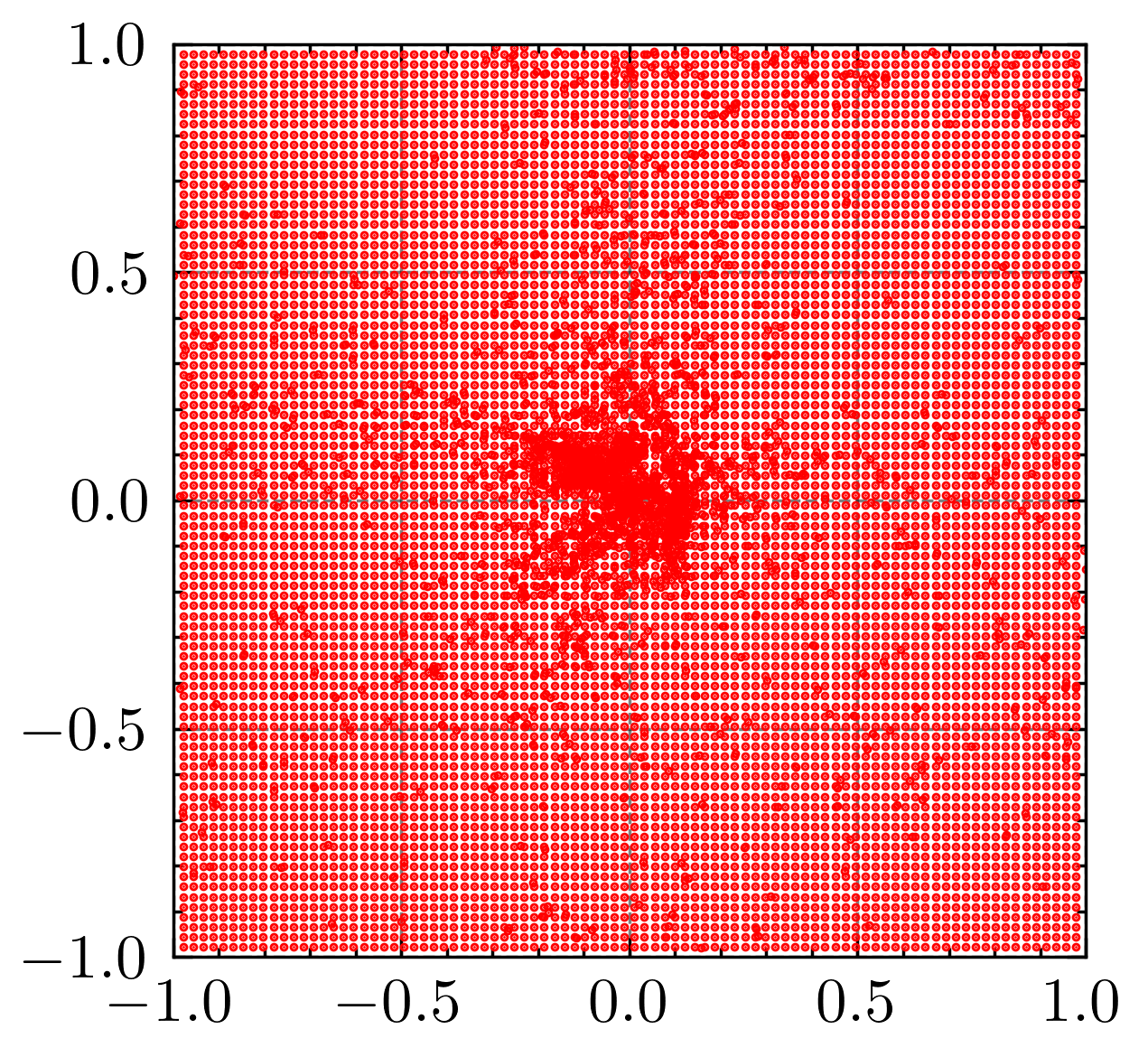}}
\subfloat[approximate solution]{\includegraphics[width = 0.31\textwidth]
{./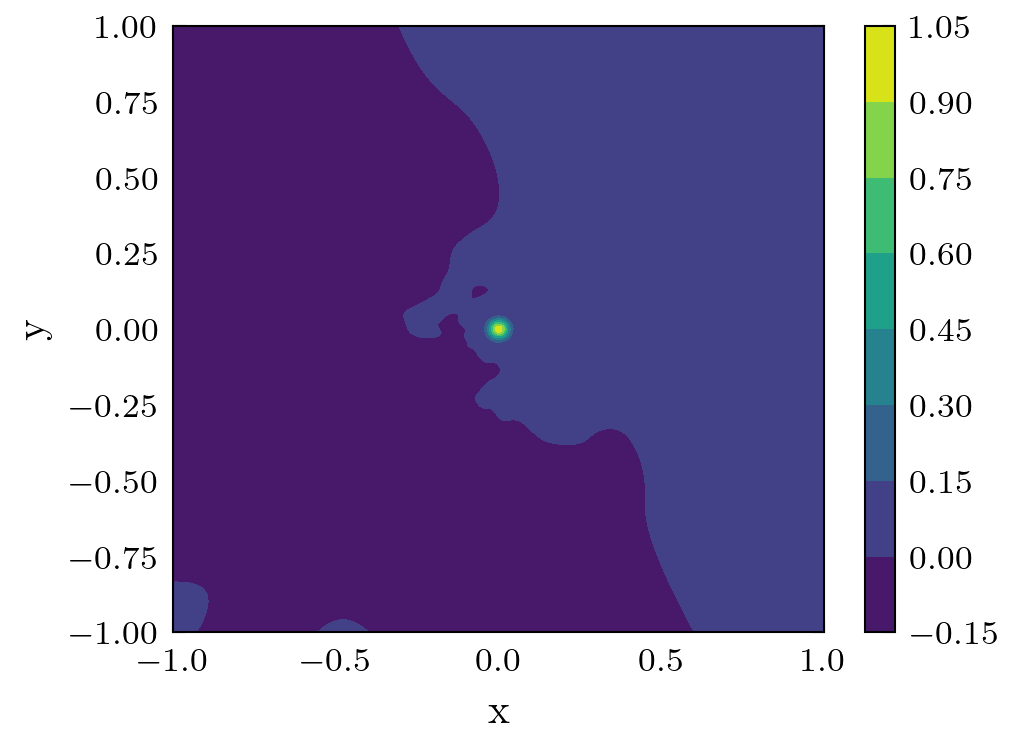}}
\subfloat[absolute error]{\includegraphics[width = 0.31\textwidth]{./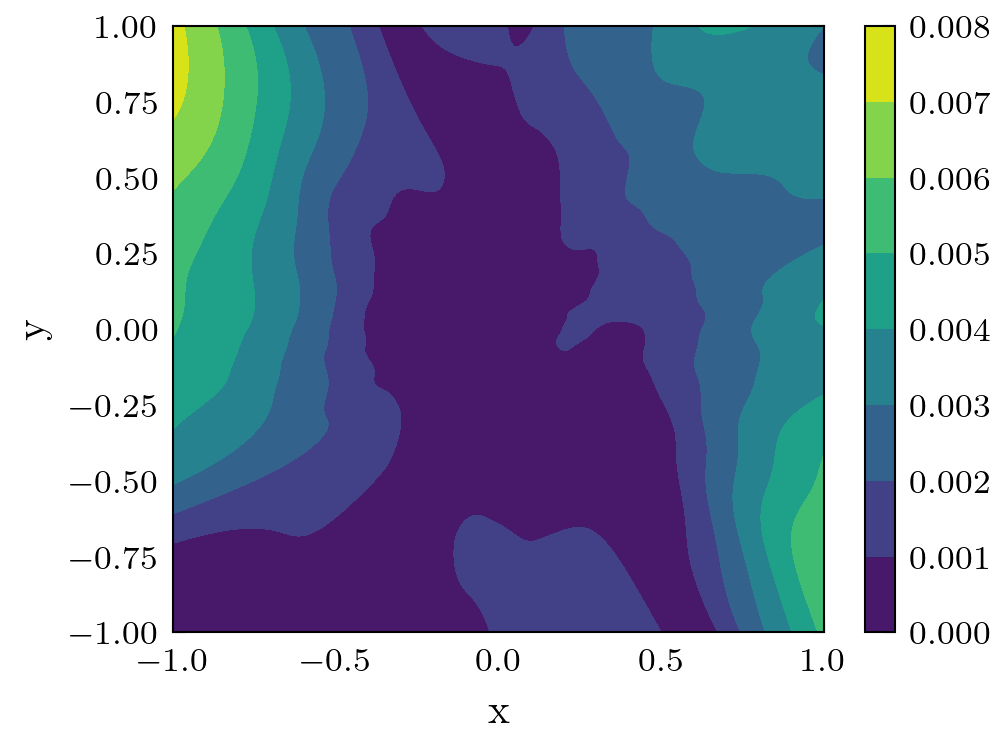}}
\caption{ \textcolor{black}{The result of DAS-PINN for the two-dimensional Poisson equation with one peak. (a) training points sampled by DAS-G method, the hyper parameters are the same as in Section 6.1.1 of \cite{das-pinns}; (b) the approximate solution $u^{\text{DAS-PINN}}$; (c) the absolute error $\vert u^* - u^{\text{DAS-PINN}} \vert$.}}
\label{fig:Peak2D_DAS}
\end{figure}

\begin{figure}[htbp]
\centering
\subfloat[new distribution of sampling points]{\includegraphics[width = 0.27\textwidth]{./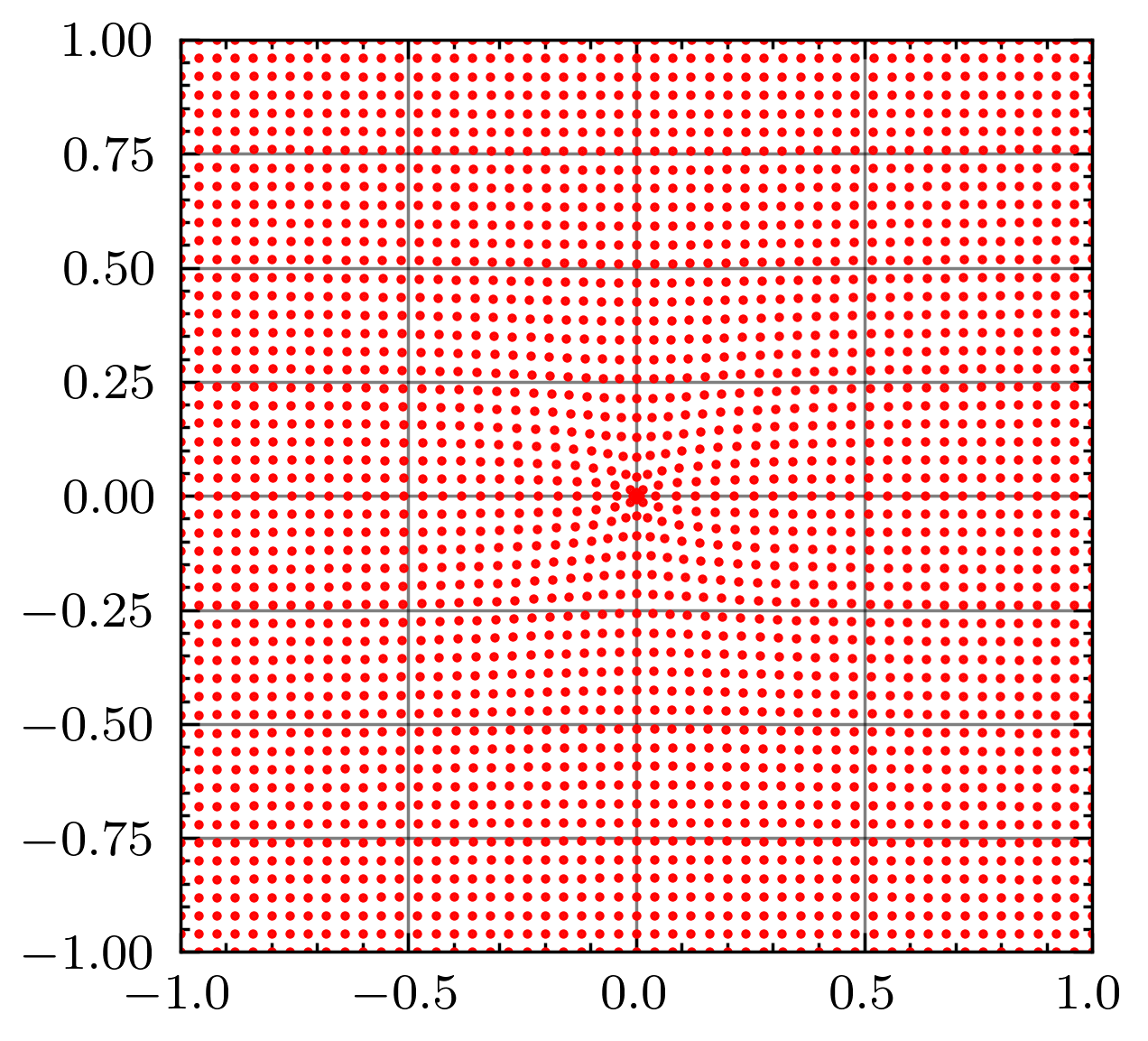}}
\subfloat[approximate solution]{\includegraphics[width = 0.31\textwidth]
{./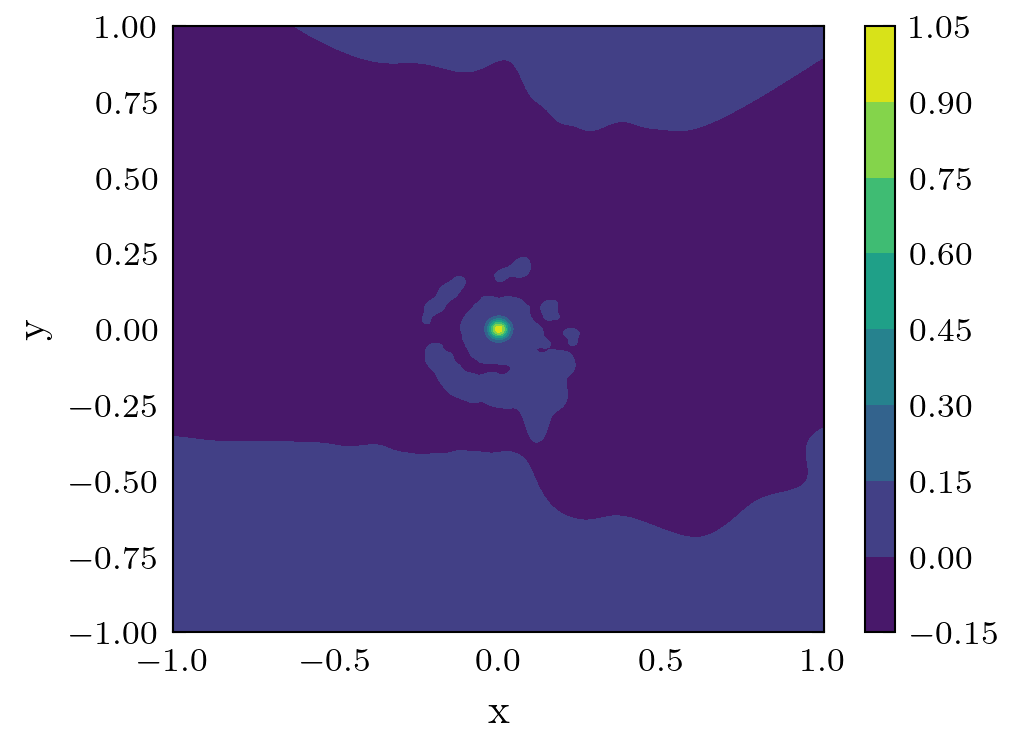}}
\subfloat[absolute error]{\includegraphics[width = 0.31\textwidth]{./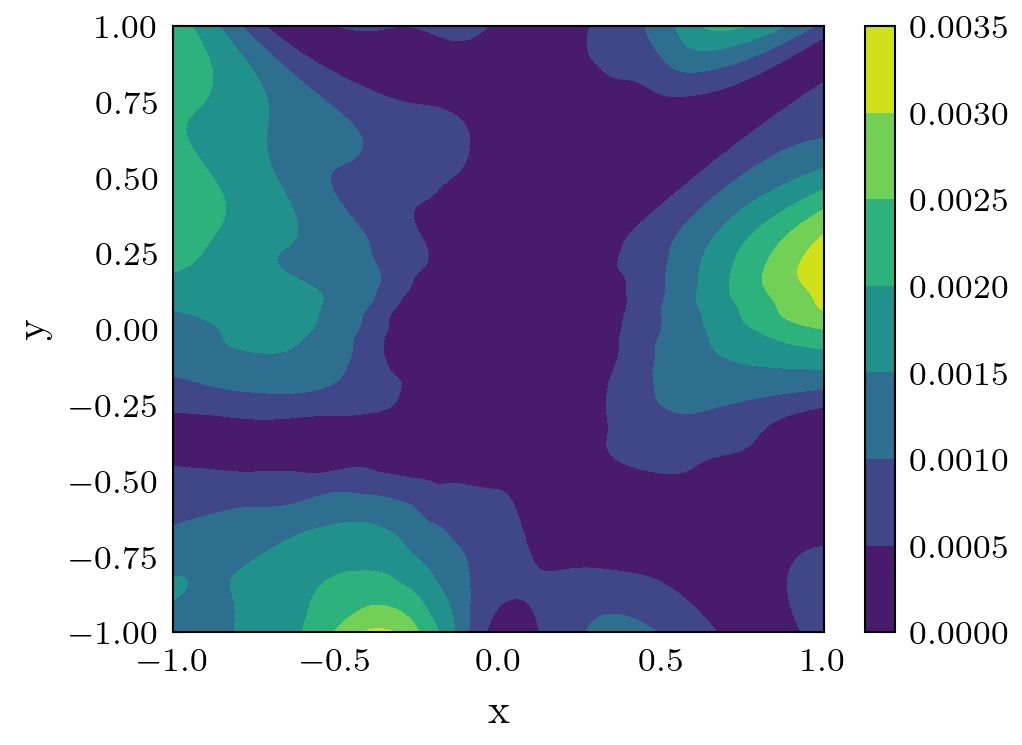}}
\caption{The result of MS-PINN for the two-dimensional Poisson equation with one peak. (a) new training points output by MMPDE-Net, for ease of viewing, we only show $50 \times 50$ points; (b) the approximate solution $u^{\text{MS-PINN}}$ of the MS-PINN; (c) the absolute error $\vert u^* - u^{\text{MS-PINN}} \vert$.}
\label{fig:MMPDE_PINN_Peak2D}
\end{figure}

\begin{figure}[htbp]
\centering
\subfloat[performance of $e_\infty(u)$ during training]{\includegraphics[width = 0.40\textwidth]{./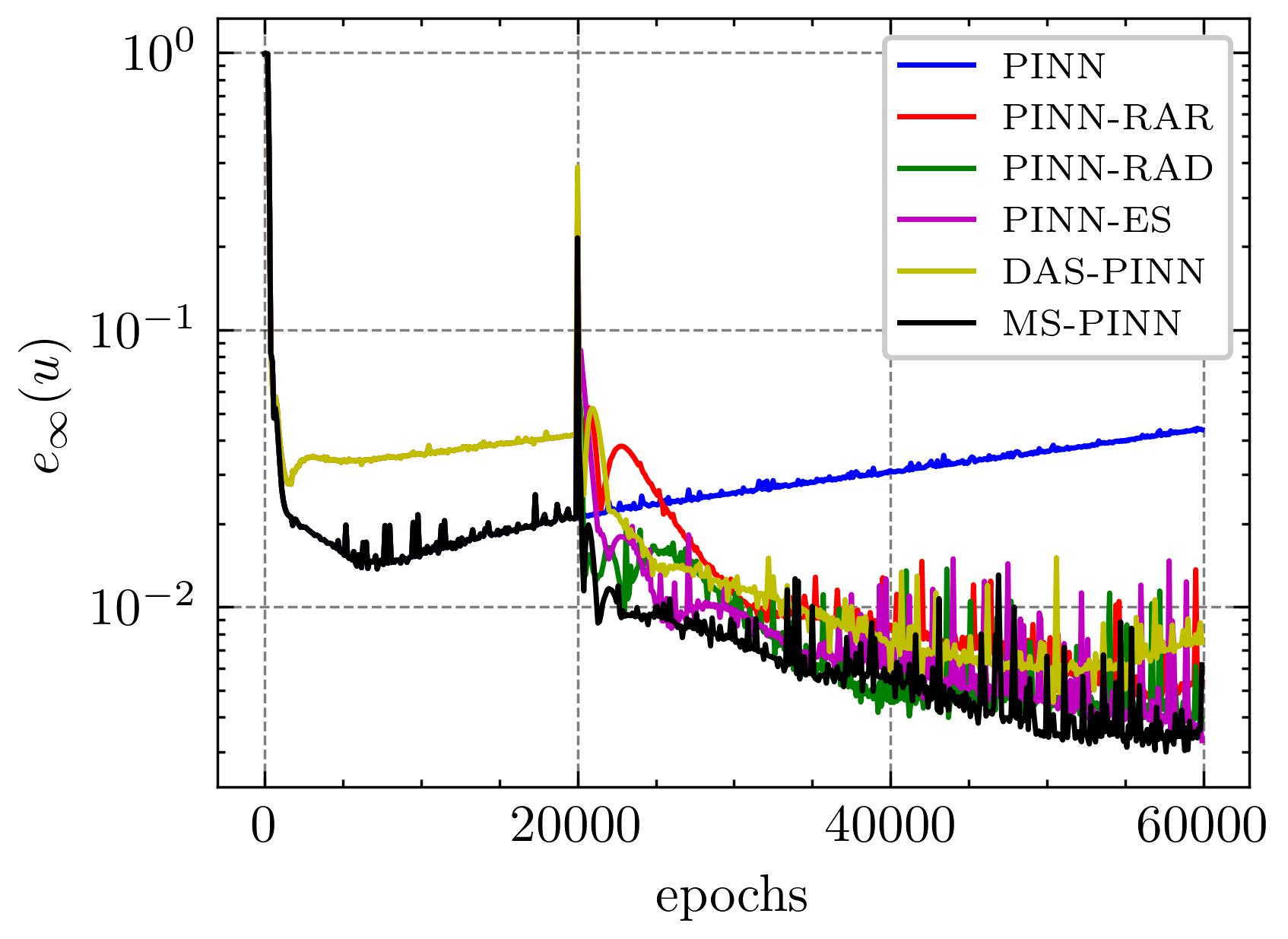}} \quad \quad \quad
\subfloat[performance of $e_2(u)$ during training]{\includegraphics[width = 0.40\textwidth]
{./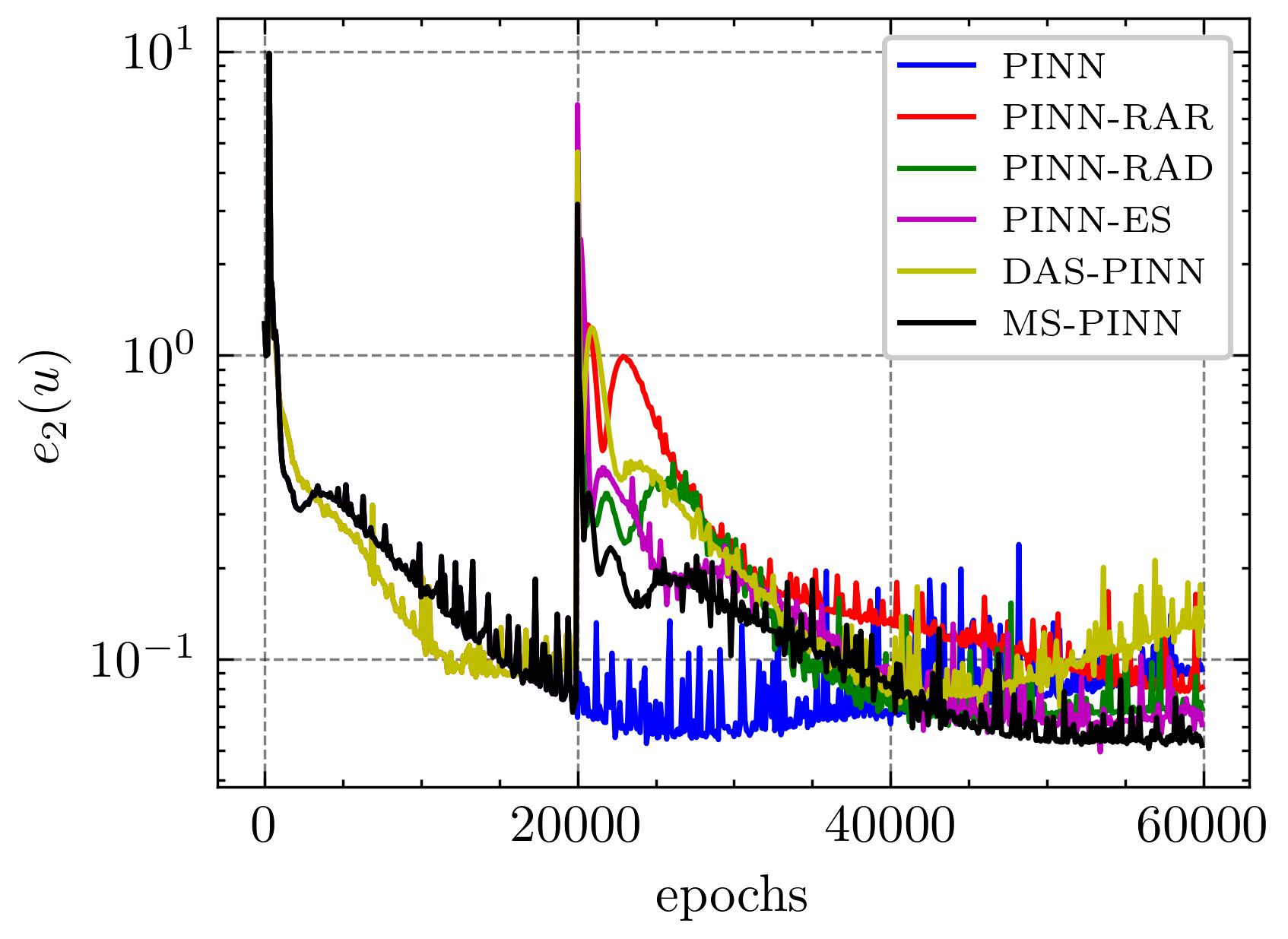}}
\caption{(a) the performance of relative error $e_\infty(u)$ with different training epochs, (b) the performance of relative error $e_2(u)$ with different training epochs.}
\label{fig:Peak2D_OnePeak_ErrorEpochs}
\end{figure}

The numerical results of PINN are given in Fig \ref{fig:PINN_Peak2D}. 
\textcolor{black}{The distribution of the training points sampled by RAR method and the approximation solution of PINN-RAR are shown in Fig \ref{fig:Peak2D_RAR}.} The distribution of the training points sampled by RAD method and the approximation solution of PINN-RAD are shown in Fig \ref{fig:Peak2D_RAD}. \textcolor{black}{The distribution of the training points sampled by evolutionary sampling method and the approximation solution of PINN-ES are shown in Fig \ref{fig:Peak2D_ES}. The distribution of the training points sampled by DAS method and the approximation solution of DAS-PINN are shown in Fig \ref{fig:Peak2D_DAS}.}
The distribution of the training points generated by MMPDE-Net and the approximation solution of MS-PINN are shown in Fig \ref{fig:MMPDE_PINN_Peak2D}. 
The performance of the six different methods in terms of relative errors on the test set during the training process is given by Fig \ref{fig:Peak2D_OnePeak_ErrorEpochs}. It is observed that there is a big jump of errors at epoch $= 20000$ in Fig \ref{fig:Peak2D_OnePeak_ErrorEpochs}, which is due to that pre-training with PINN works in the first stage, MMPDE-Net takes effect after epoch $= 20000$ and points are redistributed meanwhile. Similar phenomena arise in other cases. Moreover, a comparison of errors using different methods is given in Table \ref{tab:Peak_2D_Results}. 
From these results, it is observed that MS-PINN can obtain better results than the other methods.

\begin{table}[h]
\scriptsize
\centering
\caption{
Comparison of errors using different methods.
}
\setlength{\tabcolsep}{3.mm}{
\begin{tabular}{|c|c|c|c|c|c|c|}
\hline\noalign{\smallskip}
Relative      &  $100 \times 100$ & $100 \times 100$ & $100 \times 100$ & $100 \times 100$& $100 \times 100$& $100 \times 100$ \\
error        &  PINN &PINN-RAR& PINN-RAD& PINN-ES& DAS-PINN & MS-PINN\\
\hline
$e_\infty(u)$  & $4.374 \times 10^{-2}$  & $5.302 \times 10^{-3}$  &  $3.930 \times 10^{-3}$ & $3.753 \times 10^{-3}$&$7.466 \times 10^{-3}$&$3.389 \times 10^{-3}$\\
\hline
$e_2(u)$  & $9.213 \times 10^{-2}$  & $7.908 \times 10^{-2}$&
$6.863 \times 10^{-2}$  & $6.593 \times 10^{-2}$&$1.279 \times 10^{-1}$&$5.425 \times 10^{-2}$\\
\hline
\end{tabular}
}
\label{tab:Peak_2D_Results} 
\end{table}







\subsubsection{MS-PINN with the iterative MMPDE-Net}
\label{sec:MS-PINN with the iterative MMPDE-Net algorithm}
For Eq \eqref{eq:2d_Poisson}, we implement MS-PINN with iterative MMPDE-Net (Algorithm \ref{alg:MMPDE-Net-iterations}).
The monitor function that we use is 
\begin{equation}
    \label{eq:2d_monitorfunction_iterations}
    \hspace{-0.3cm}
    \begin{array}{r@{}l}
        \begin{aligned}
            w = 1+ 2u.
        \end{aligned}
    \end{array}
\end{equation}

We sample $80\times80$ points in $\Omega$ and 324 points on $\partial \Omega$ as the training set and $400 \times 400$  points as the test set. Four experiments with different training strategies are used to validate the effectiveness of the iterative MMPDE-Net method: PINN, MS-PINN with one MMPDE-Net iteration, MS-PINN with three MMPDE-Net iterations and MS-PINN with five MMPDE-Net iterations (See Table \ref{tab:Peak_2D_iterations_comparison}).

\begin{table}[h]
\scriptsize
\centering
\caption{
Settings of four training strategies.
}
\setlength{\tabcolsep}{3.mm}{
\begin{tabular}{|c|c|c|c|c|c|c|}
\hline\noalign{\smallskip}
Training &  Sampling & USE  & MMPDE-Net & MMPDE-Net& Pre-training &Formal training\\
strategy& points & MMPDE-Net & iterations& epochs& epochs & epochs\\
\hline
Strategy 1  & 6400+324  &  No & - & - &  -  &70000 \\
\hline
Strategy 2  & 6400+324  &  Yes & 1 & 20000&20000&50000\\
\hline
Strategy 3  & 6400+324  &  Yes &3&60000&20000&50000\\
\hline
Strategy 4  &6400+324  &  Yes&5&100000&20000&50000 \\
\hline
\end{tabular}
}
\label{tab:Peak_2D_iterations_comparison} 
\end{table}

\begin{figure}[htbp]
\centering
\subfloat[initial training points]{\includegraphics[width = 0.25\textwidth]{./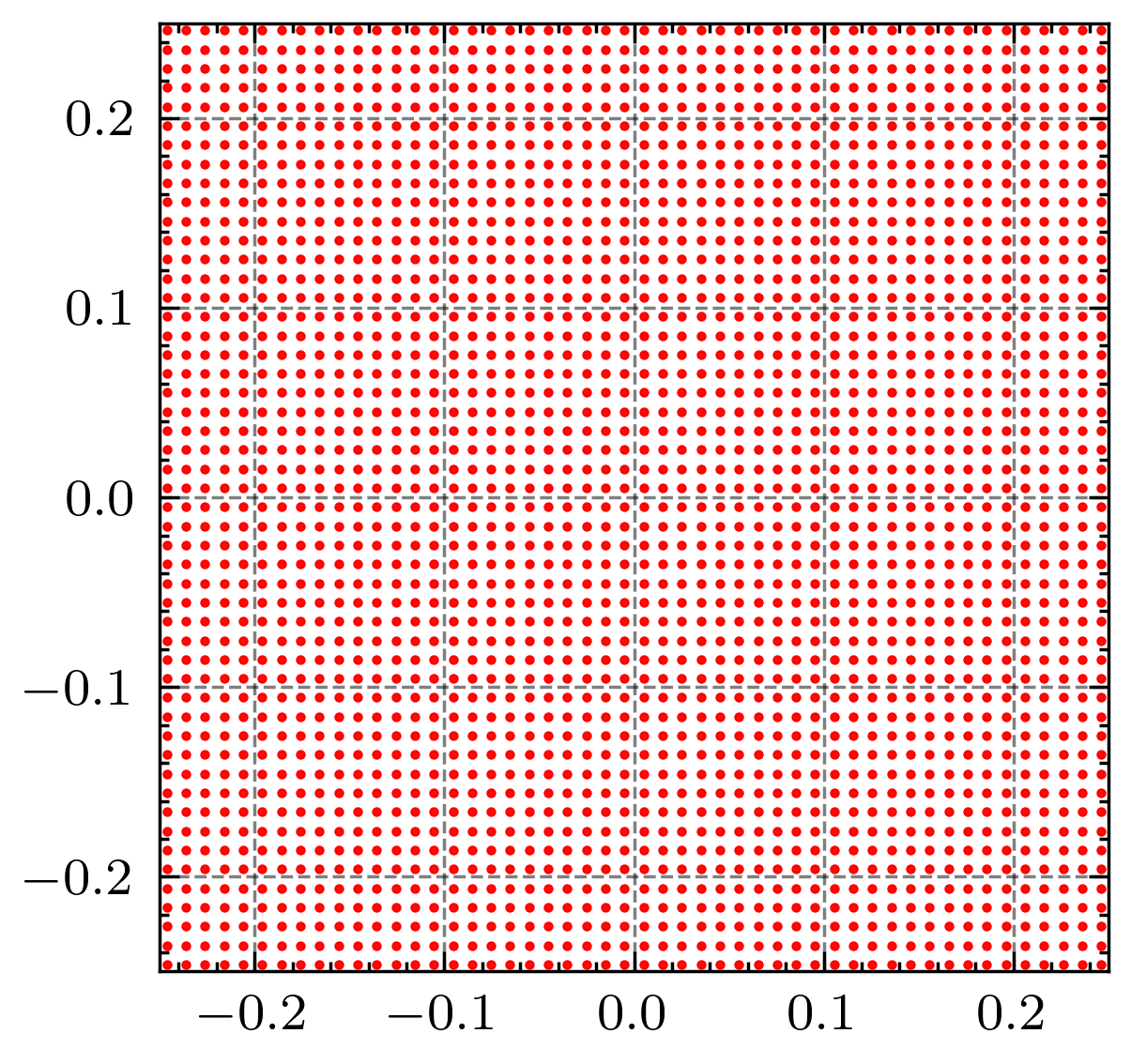}}
\subfloat[after 1 iteration]{\includegraphics[width = 0.25\textwidth]
{./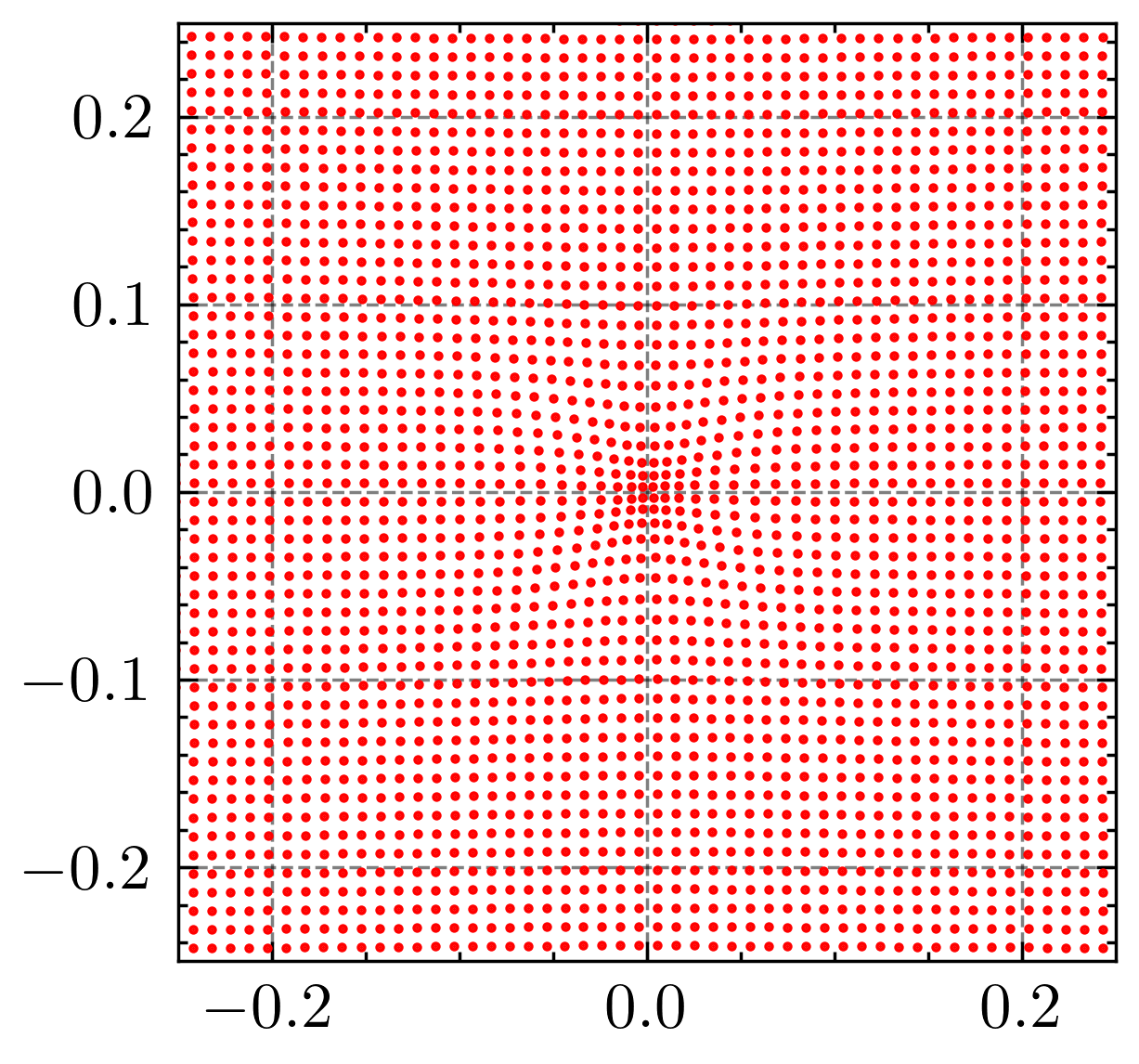}}
\subfloat[after 3 iterations]{\includegraphics[width = 0.25\textwidth]{./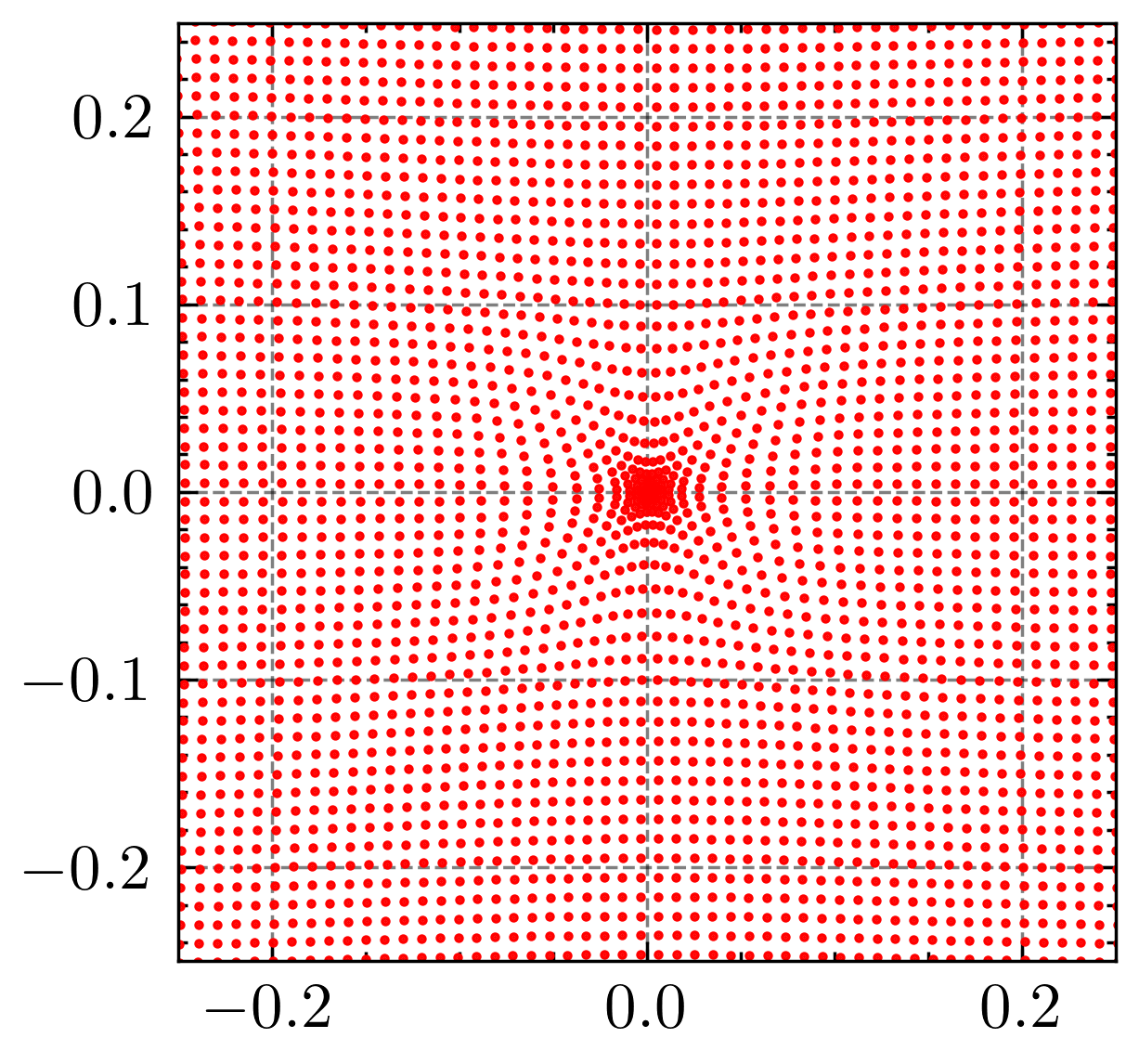}}
\subfloat[after 5 iterations]{\includegraphics[width = 0.25\textwidth]{./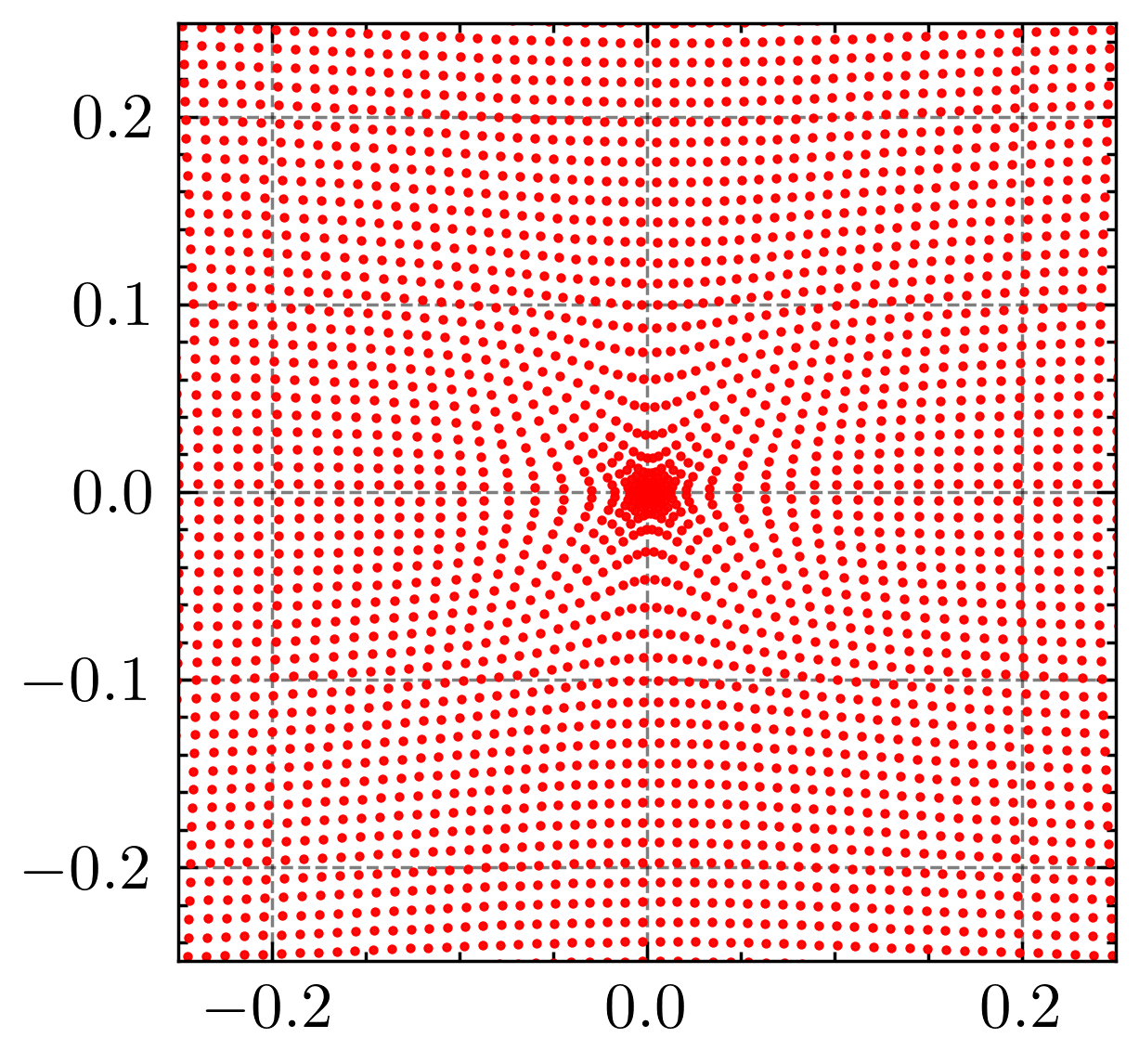}}
\caption{The results of iterative MMPDE-Net for the two-dimensional Poisson equation with one peak. To see the results clearly, we show 
the region $[-0.25,0.25]^2$ particularly.}
\label{fig:Peak2D-Iterations_mesh_part2}
\end{figure}

\begin{figure}[htbp]
\centering
\subfloat[performance of $e_\infty(u)$ during training]{\includegraphics[width = 0.40\textwidth]{./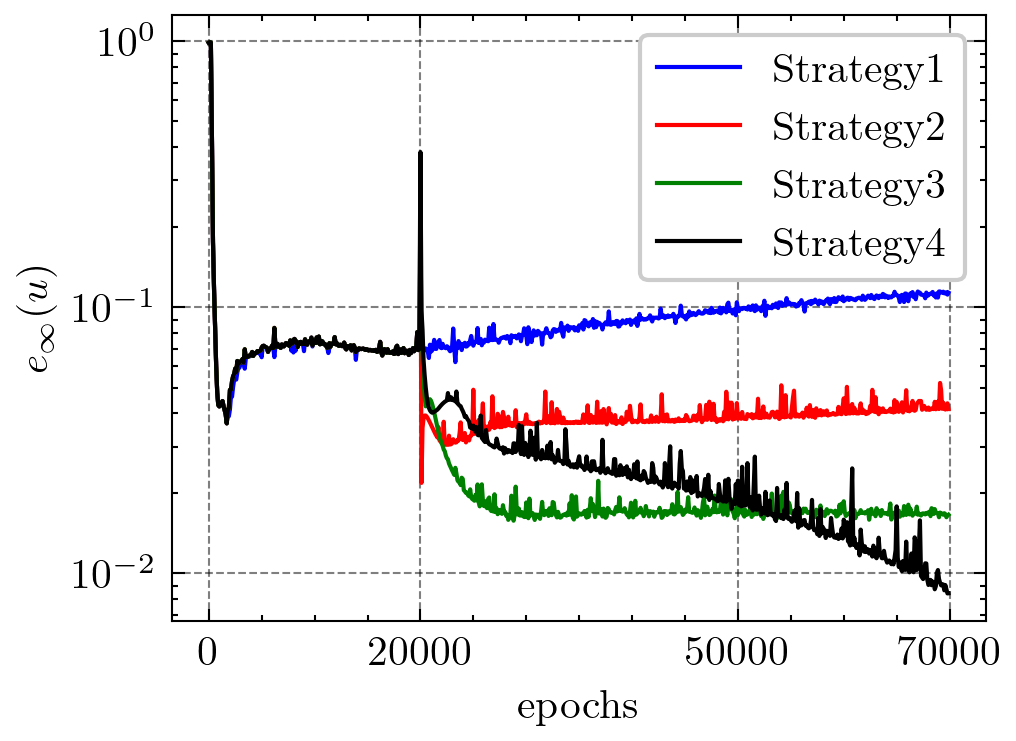}} \quad \quad \quad
\subfloat[performance of $e_2(u)$ during training]{\includegraphics[width = 0.40\textwidth]
{./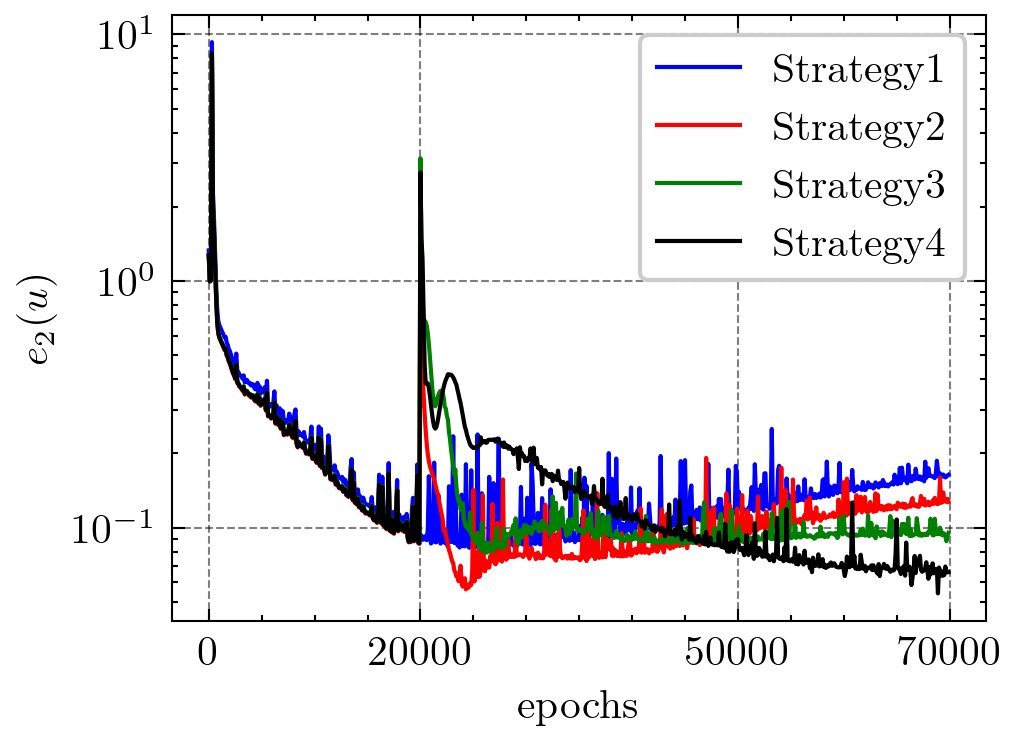}}
\caption{(a) the performance of relative error $e_\infty(u)$ with different training epochs; (b) the performance of relative error $e_2(u)$ with different training epochs.}
\label{fig:Peak2D-Iterations_ErrorEpochs}
\end{figure}

\begin{figure}[htbp]
\centering
\subfloat[PINN]{\includegraphics[width = 0.25\textwidth]{./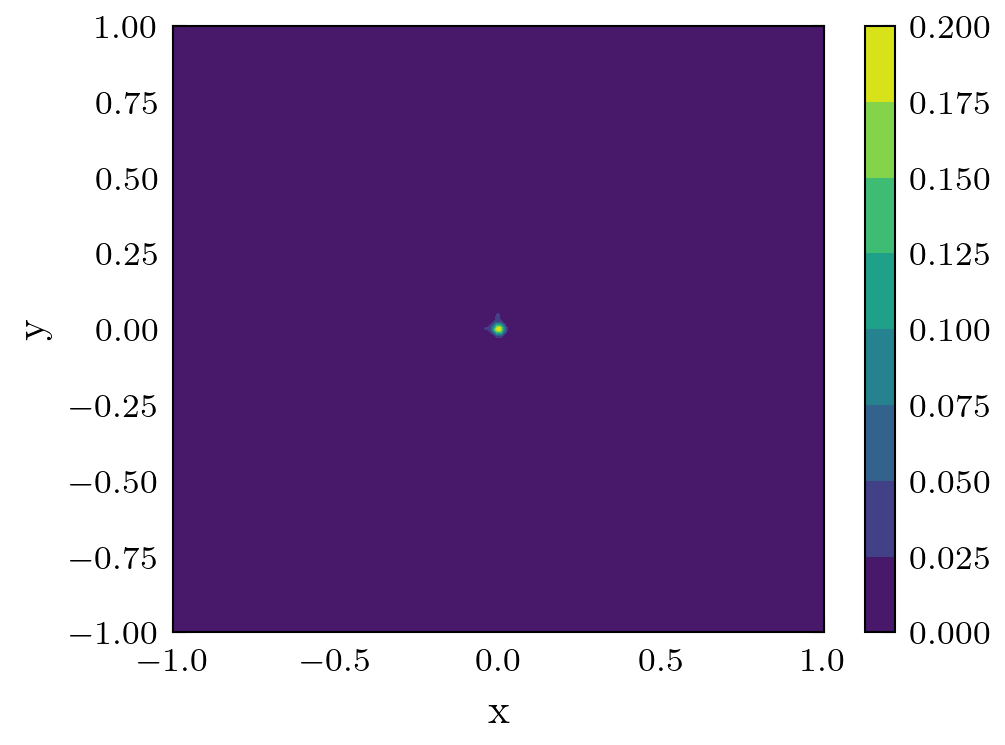}}
\subfloat[after 1 iteration]{\includegraphics[width = 0.25\textwidth]
{./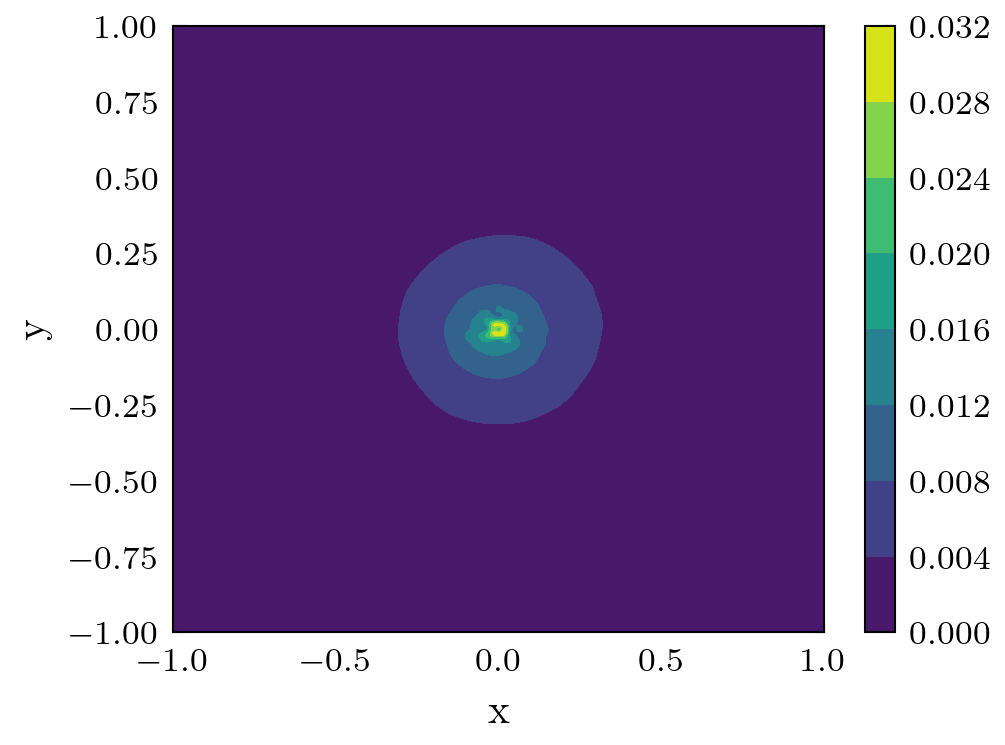}}
\subfloat[after 3 iterations]{\includegraphics[width = 0.25\textwidth]{./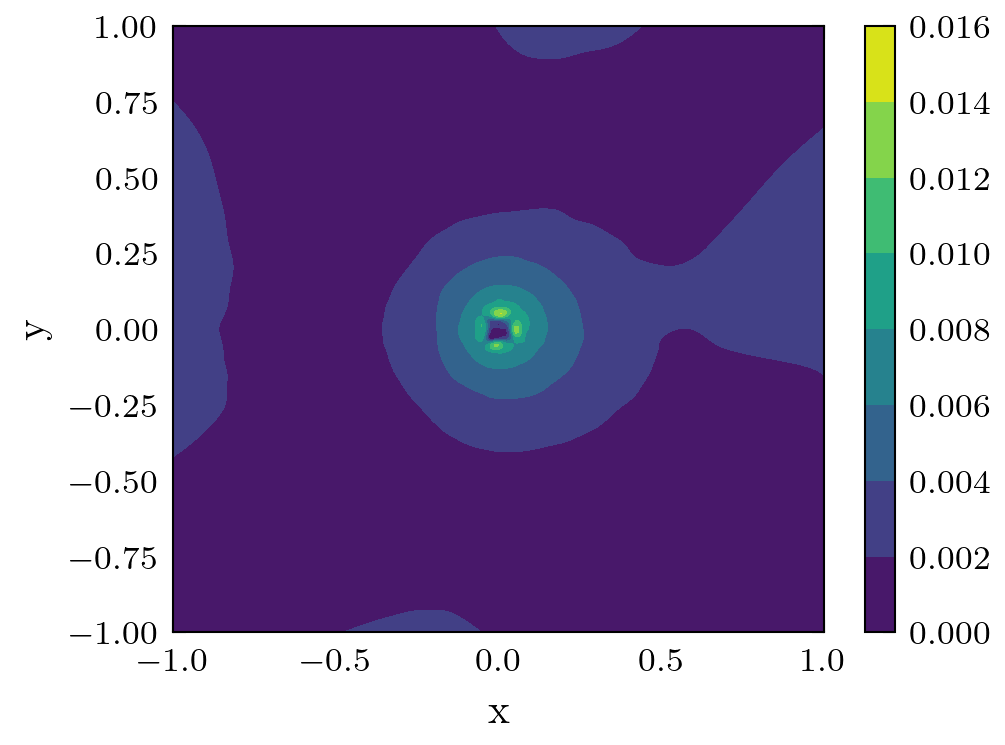}}
\subfloat[after 5 iterations]{\includegraphics[width = 0.25\textwidth]{./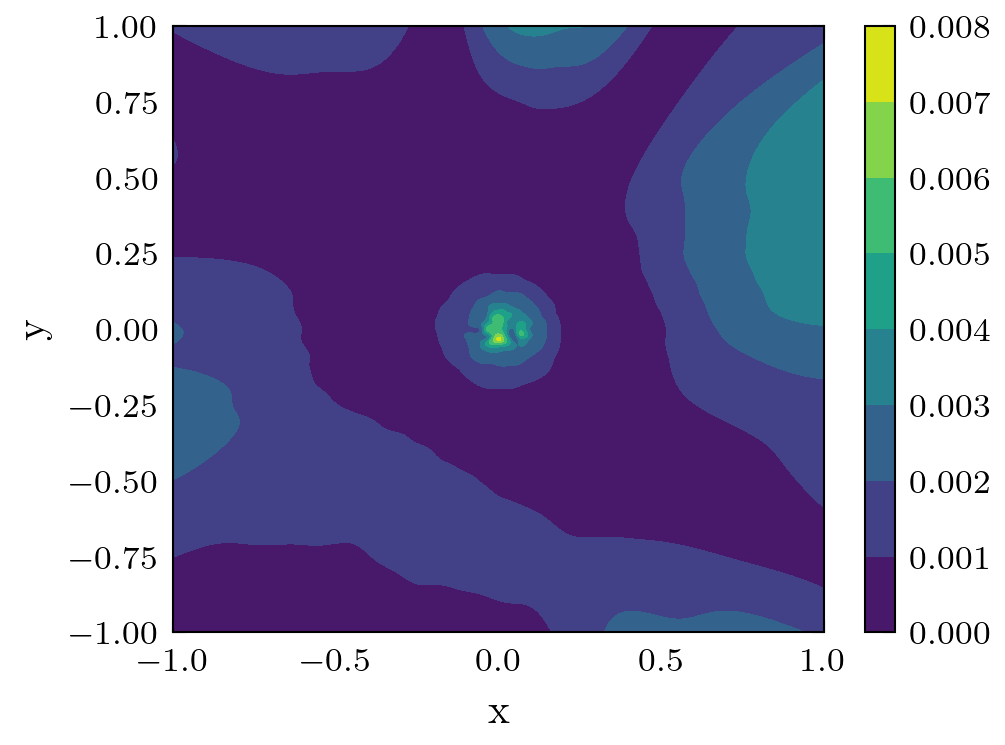}}
\caption{The absolute errors of MS-PINN using iterative MMPDE-Net for the two-dimensional Poisson equation with one peak. (a) the absolute error of PINN; (b) the absolute error of MS-PINN using MMPDE-Net with one iteration; (c) the absolute error of MS-PINN using MMPDE-Net with three iterations; (d) the absolute error of MS-PINN using MMPDE-Net with five iterations.}
\label{fig:Peak2D-Iterations_heatmap}
\end{figure}

\begin{figure}
\centering
\includegraphics[width = 0.45\textwidth]{./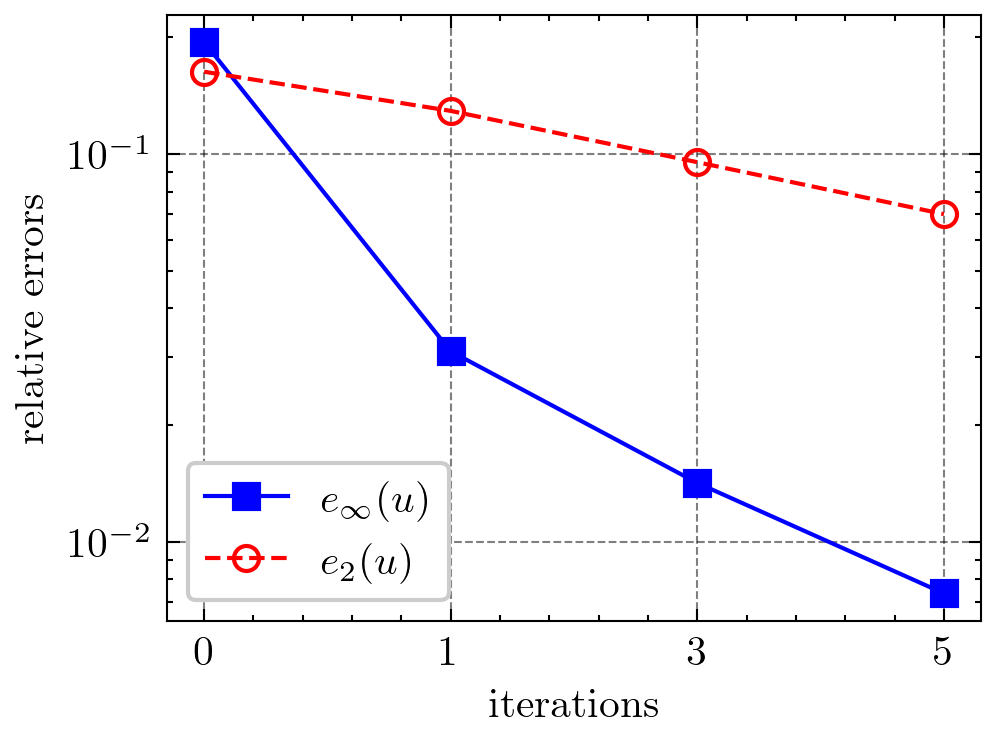}
\caption{The relative errors of MS-PINN with different  MMPDE-Net iterations.}
\label{fig:Peak2D-Iterations_Errorline}
\end{figure}

From Fig \ref{fig:Peak2D-Iterations_mesh_part2}, it can be seen that the sampling points obtained from MMPDE-Net are  concentrated much more on the place where the solution has large variations as the number of iterations increases.
The performance of the four different strategies in terms of relative errors during the training process is given in Fig \ref{fig:Peak2D-Iterations_ErrorEpochs}. MMPDE-Net iteration can reduce the errors effectively, since it make the points move to the places where need more samplings.
From Fig \ref{fig:Peak2D-Iterations_heatmap} and Fig \ref{fig:Peak2D-Iterations_Errorline}, it can be seen that MMPDE-Net with iterations in MS-PINN can improve the numerical results, the relative errors decrease as the number of iterations increases.

\subsection{Two-dimensional Poisson equation with two peaks}
\label{sec:DoublePeaks_2D}

Consider the two-dimensional Poisson equation Eq \eqref{eq:2d_Poisson} in $\Omega = (-1,1)^2$,
the analytical solution with two peaks is given by

\begin{equation}
    \label{eq:DoublePeaks2d_solution}
    \hspace{-0.3cm}
    \begin{array}{r@{}l}
        \begin{aligned}
            u = e^{-1000 \left( x^2+(y-0.5)^2\right)}+ e^{-1000\left(x^2+(y+0.5)^2\right)}.
        \end{aligned}
    \end{array}
\end{equation}
The monitor function that we use in MMPDE-Net is 

\begin{equation}
    \label{eq:DoublePeaks2d_monitorfunction}
    \hspace{-0.3cm}
    \begin{array}{r@{}l}
        \begin{aligned}
            w = \sqrt{1+ 1000u^2 + \lvert \nabla u \rvert^2}.
        \end{aligned}
    \end{array}
\end{equation}

We sample $100 \times 100$  points in $\Omega$  and 400 points on $\partial \Omega$ as the training set and $400 \times 400$  points as the test set.  
The setting of main parameters in MS-PINN is given in Table \ref{tab:parameter-MMPDENet}. 
We compare the numerical results of MS-PINN  with the results obtained by PINN trained 60000 epochs to show the effectiveness of our method.

\begin{figure}[htbp]
\centering
\subfloat[analytical solution]{\includegraphics[width = 0.30\textwidth]{./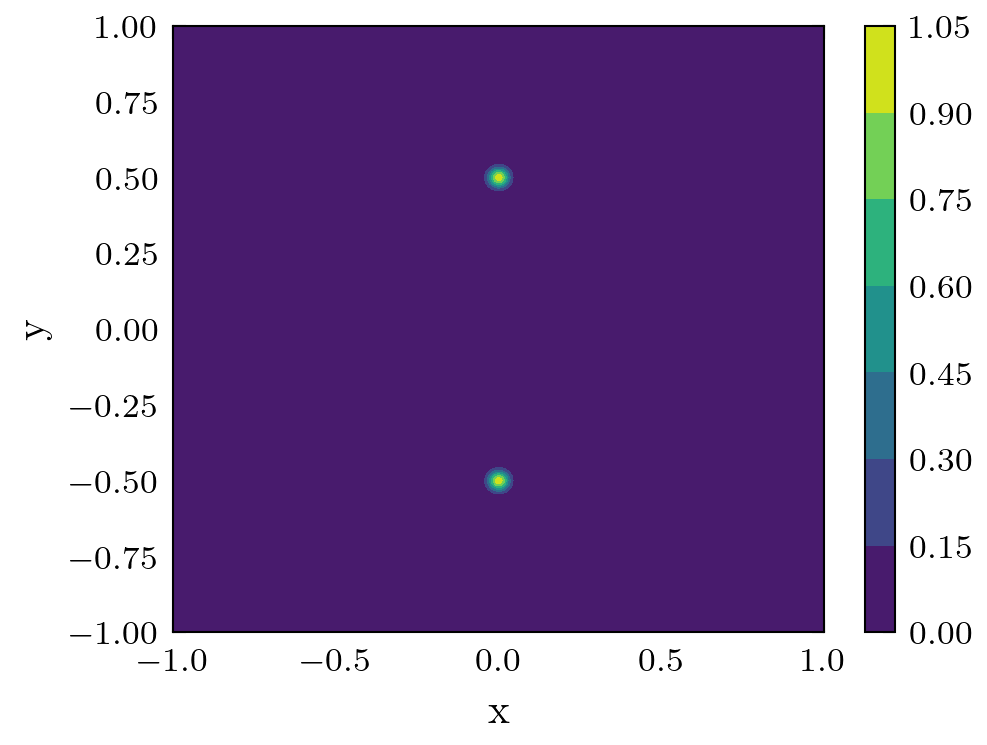}}
\subfloat[approximate solution]{\includegraphics[width = 0.30\textwidth]
{./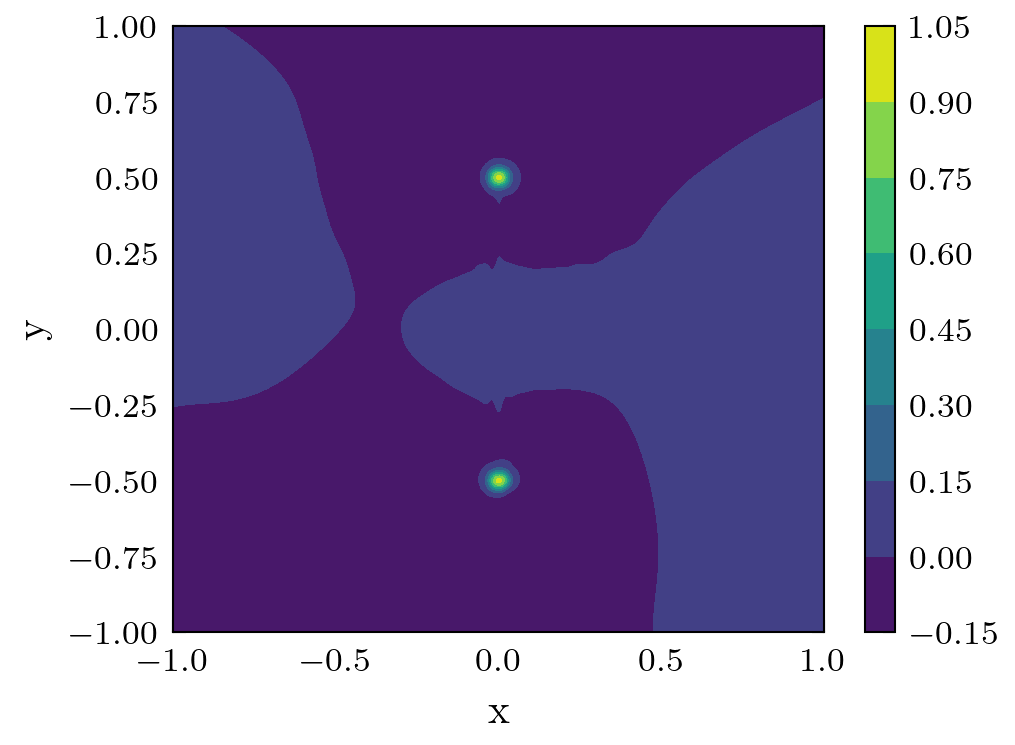}}
\subfloat[absolute error]{\includegraphics[width = 0.30\textwidth]{./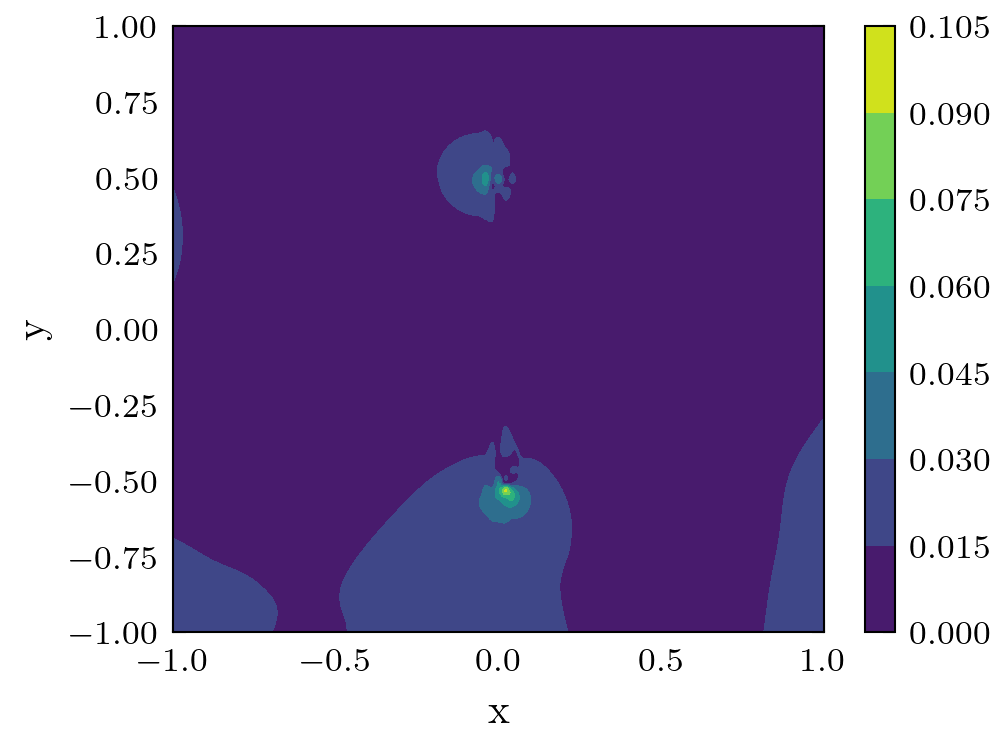}}
\caption{The result of PINN for the two-dimensional Poisson equation with two peaks. (a) the analytical solution $u^*$ of the equation; 
(b) the approximate solution $u^{\text{PINN}}$ of PINN; (c) the absolute error $\vert u^* - u^{\text{PINN}} \vert$.}
\label{fig:PINN_DoublePeaks2D}
\end{figure}

\begin{figure}[htbp]
\centering
\subfloat[new distribution of sampling points]{\includegraphics[width = 0.27\textwidth]{./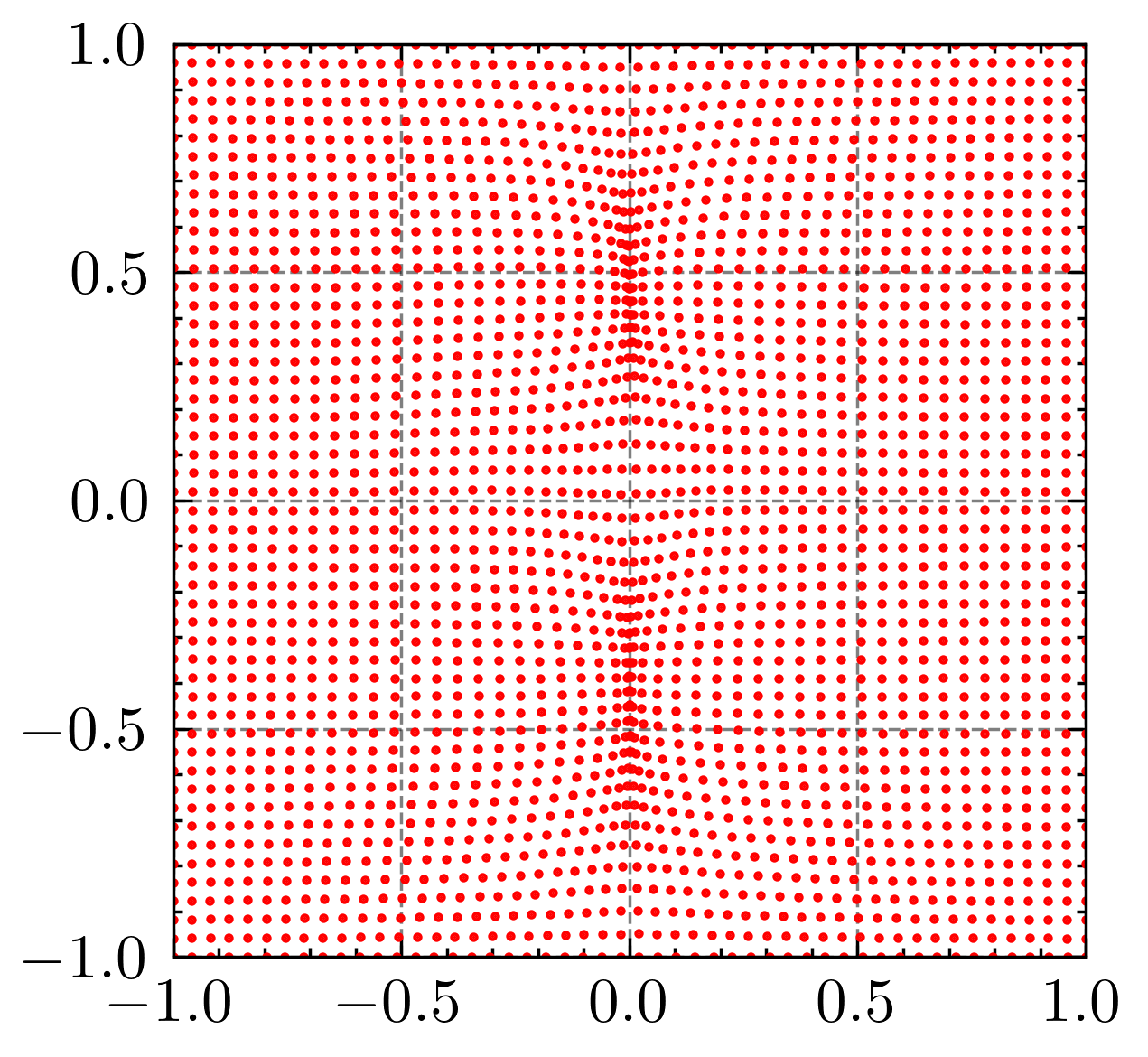}}
\subfloat[approximate solution]{\includegraphics[width = 0.31\textwidth]
{./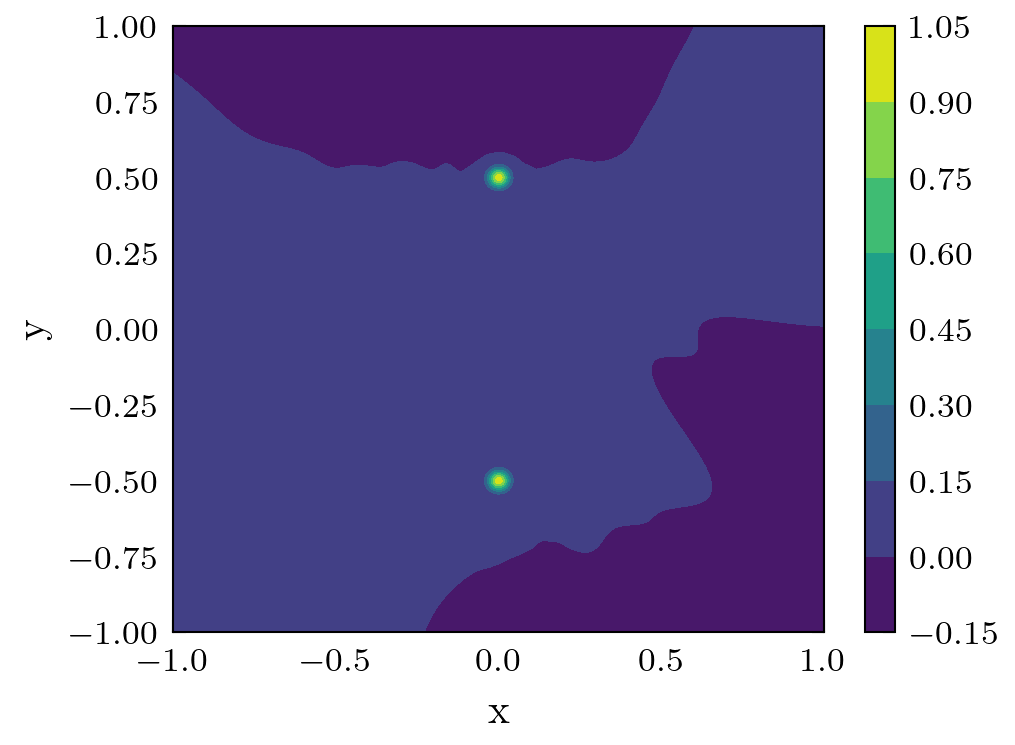}}
\subfloat[absolute error]{\includegraphics[width = 0.31\textwidth]{./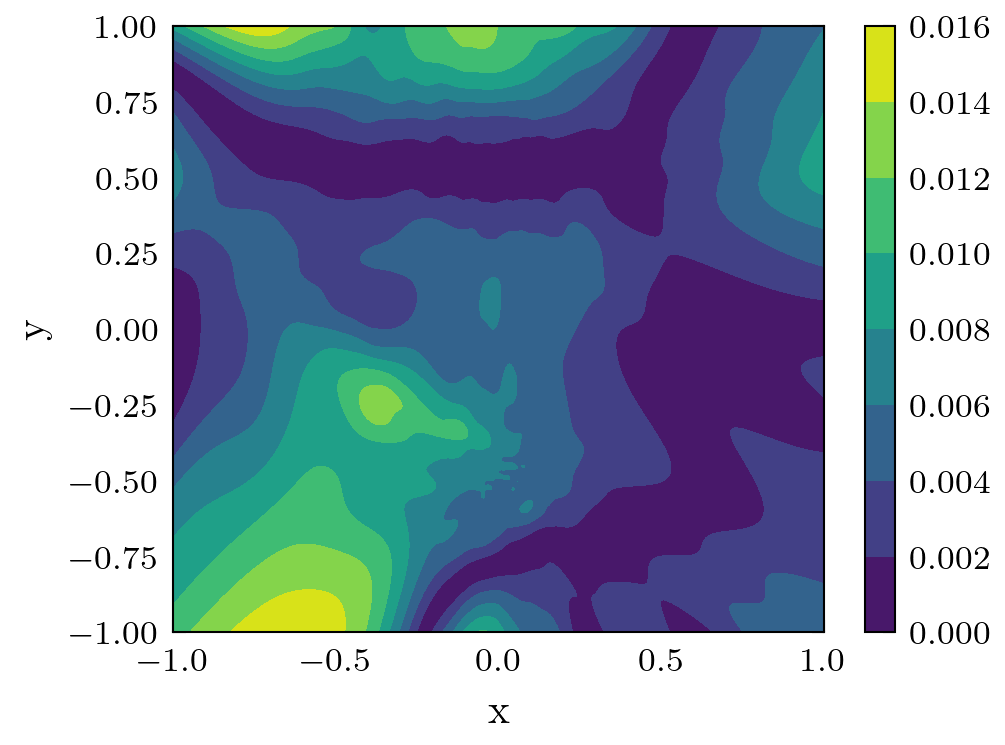}}
\caption{The results of MS-PINN for the two-dimensional Poisson equation with two peaks. (a) new training points obtained by MMPDE-Net; (b) the approximate solution $u^{\text{MS-PINN}}$ of the MS-PINN; (c) the absolute error $\vert u^* - u^{\text{MS-PINN}} \vert$.}
\label{fig:MMPDE_PINN_DoublePeaks2D}
\end{figure}

\begin{figure}[htbp]
\centering
\subfloat[performance of $e_\infty(u)$ during training]{\includegraphics[width = 0.40\textwidth]{./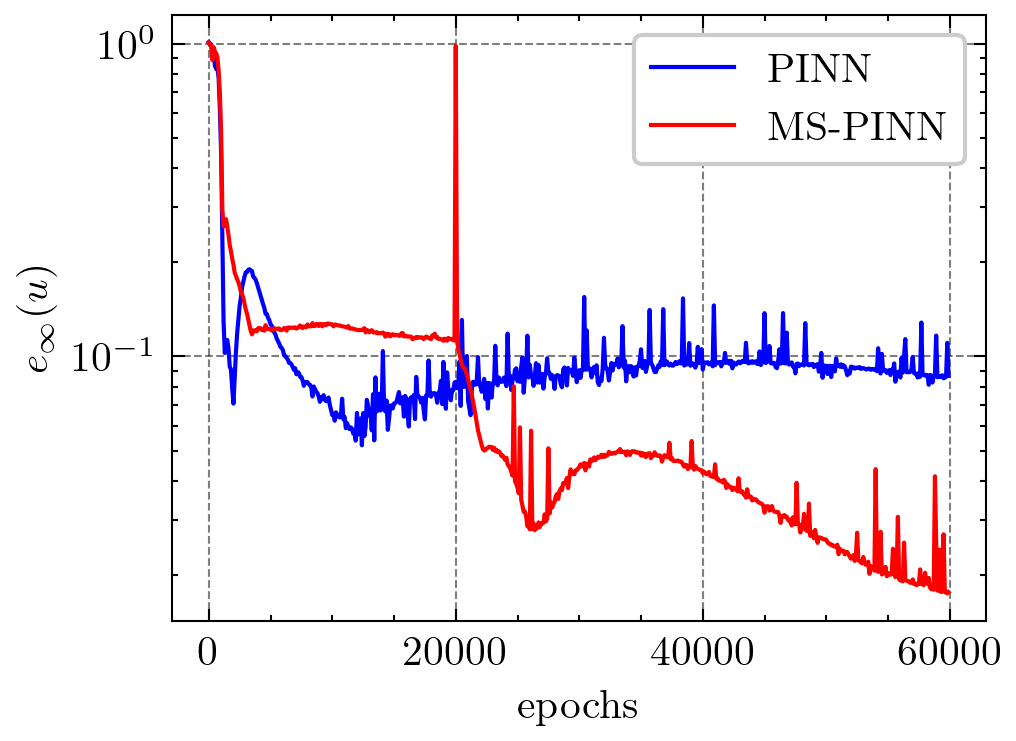}} \quad \quad \quad
\subfloat[performance of $e_2(u)$ during training]{\includegraphics[width = 0.40\textwidth]
{./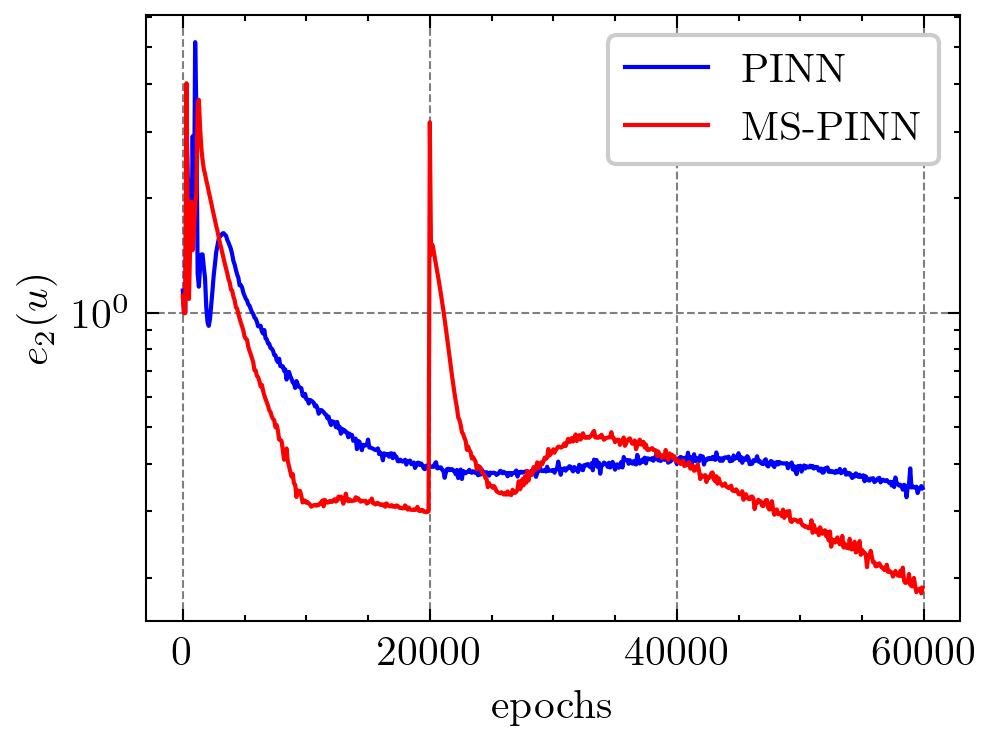}}
\caption{(a) the performance of relative error $e_\infty(u)$ with different training epochs; (b) the performance of relative error $e_2(u)$ with different training epochs.}
\label{fig:DoublePeaks2D_ErrorEpochs}
\end{figure}
The numerical results of PINN are given in Fig \ref{fig:PINN_DoublePeaks2D}. The distribution of the training points generated by MMPDE-Net and the approximation solution of MS-PINN are shown in Fig \ref{fig:MMPDE_PINN_DoublePeaks2D}. The performance of PINN and MS-PINN in terms of relative errors  during the training process is given in Fig \ref{fig:DoublePeaks2D_ErrorEpochs}, which shows that MS-PINN behaves better than PINN.

\subsection{One-dimensional Burgers equation}
\label{sec:Burgers_1D}

\subsubsection{Forward problem}
\label{sec:Burgers_1D_Forward}
Consider the following one-dimensional Burgers equation \cite{PINN}

\begin{equation}
	\label{eq:1d_Burgers}
	\hspace{-0.3cm}
	\begin{array}{r@{}l}
		\left\{
		\begin{aligned}
			& u_t+uu_{x}-\frac{0.01}{\pi}u_{xx} = 0, \quad  x \in (-1,1), \quad  t \in (0,1), \\
			& u(x,0) = -\sin(\pi x), \quad  x \in [-1,1], \\
            & u(-1,t) = u(1,t) = 0, \quad  t \in [0,1], \\
		\end{aligned}
		\right.
	\end{array}
\end{equation}
the analytical solution is given in \cite{basdevant1986spectral}. 
\begin{figure}[htbp]
\centering
\subfloat[$u^{\text{PINN}}$]{\includegraphics[width = 0.33\textwidth]{./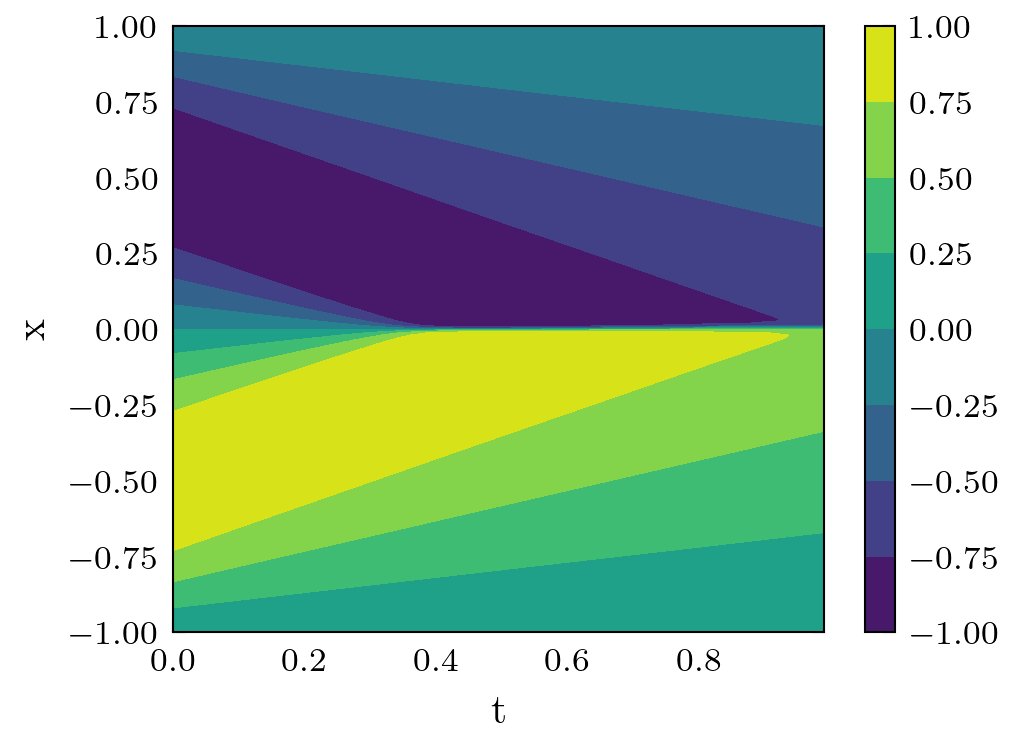}}
\subfloat[$u^{\text{MSPINN}}$]{\includegraphics[width = 0.33\textwidth]
{./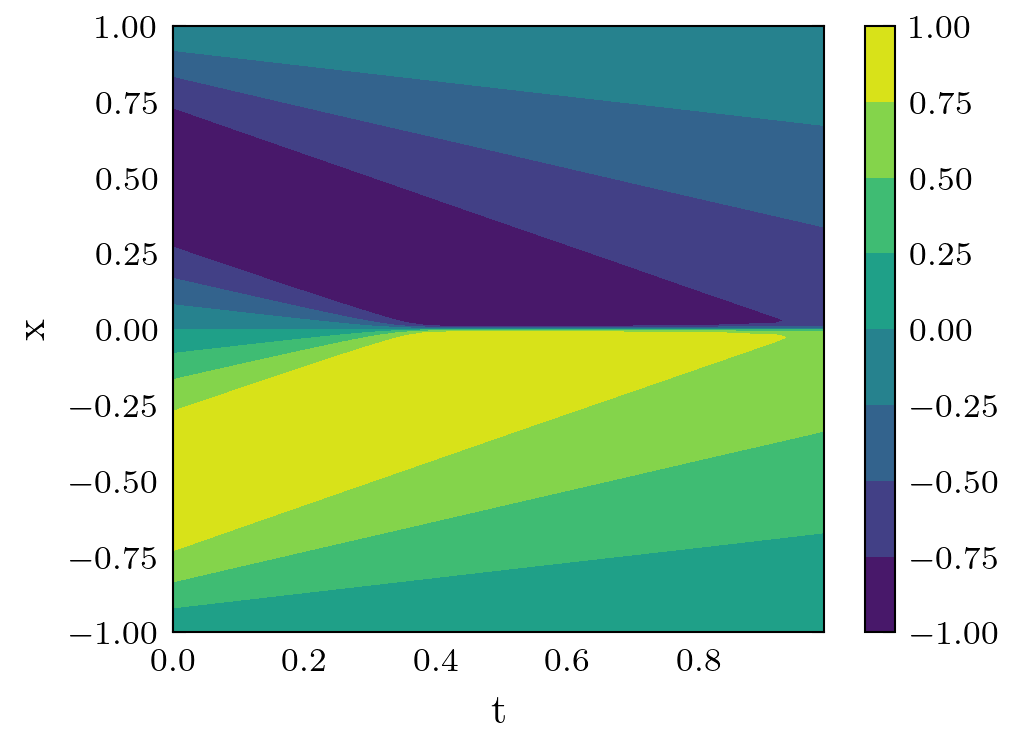}}
\subfloat[$u^{*}$]{\includegraphics[width = 0.33\textwidth]{./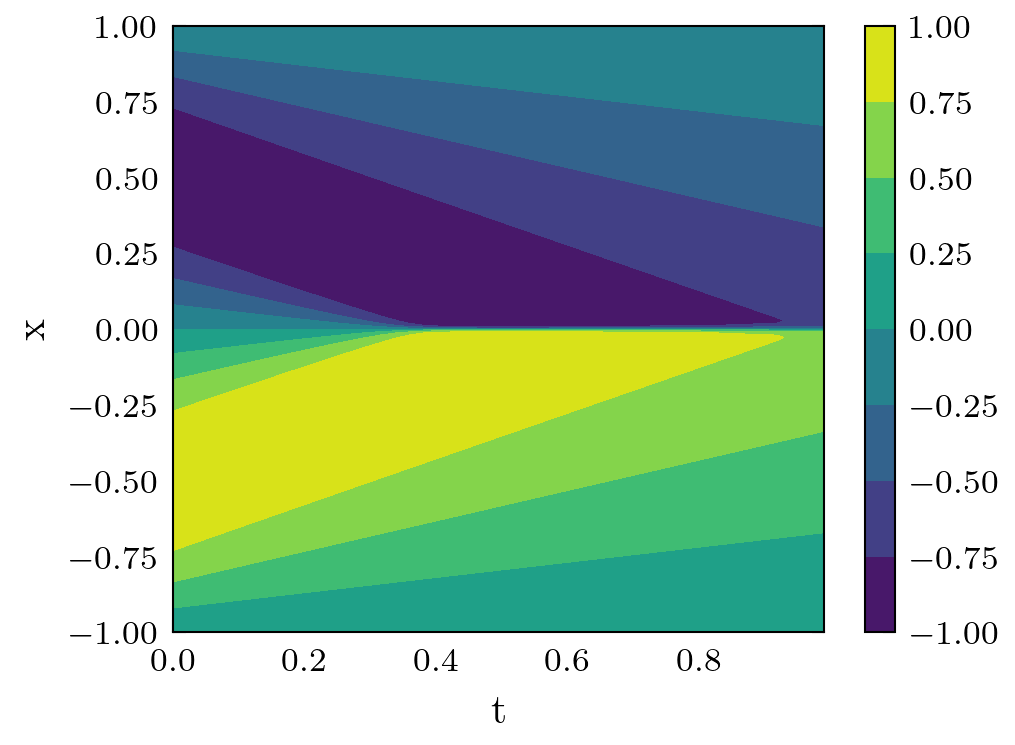}}
\caption{The solutions of the forward problem for the one-dimensional Burgers equation. (a) the solution $u^{\text{PINN}}$ predicted by PINN; (b) the solution $u^{\text{MSPINN}}$ predicted by MSPINN; (c) the analytic solution $u^{*}$.}
\label{fig:analyticsolution_Burgers1D}
\end{figure}
The monitor function that we use in MMPDE-Net is 

\begin{equation}
    \label{eq:1d_Bugers_monitorfunction}
    \hspace{-0.3cm}
    \begin{array}{r@{}l}
        \begin{aligned}
            w = \sqrt{1+u^2+(0.5u_x)^2}.
        \end{aligned}
    \end{array}
\end{equation}

In $[-1,1] \times [0,1]$, we sample $152 \times 100$ points as the training set and $256 \times 100$  points as the test set. The settings of main parameters are in Table  \ref{tab:parameter-MMPDENet}.
We compare the numerical results of MS-PINN  with the results obtained by PINN trained 60000 epochs to show the effectiveness of our method. 
\begin{figure}[htbp]
\centering
\subfloat[new distribution of sampling points]{\includegraphics[width = 0.40\textwidth]{./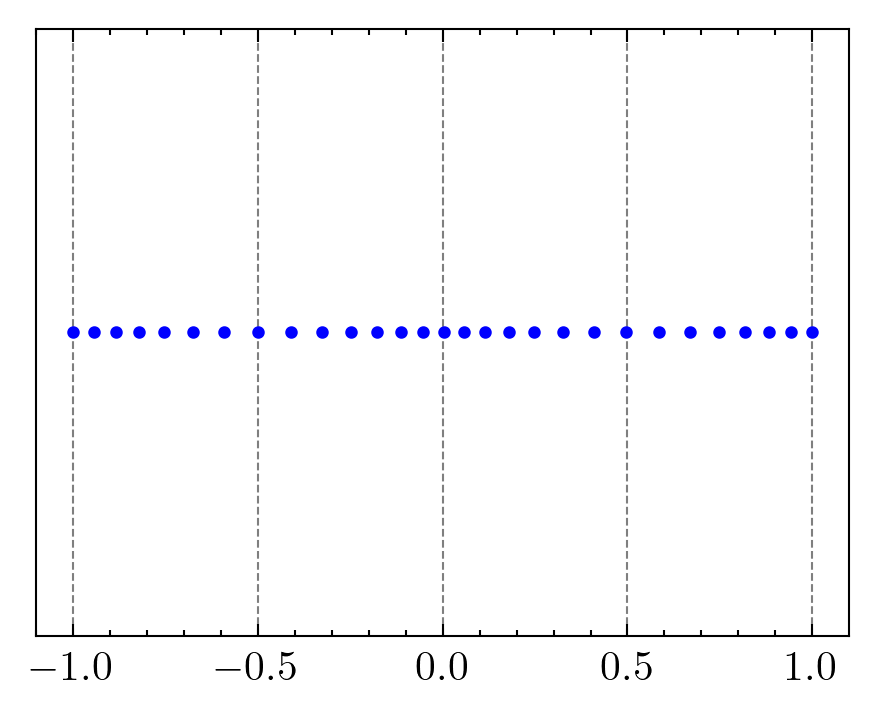}}
\subfloat[performance of the loss function]{\includegraphics[width = 0.40\textwidth]
{./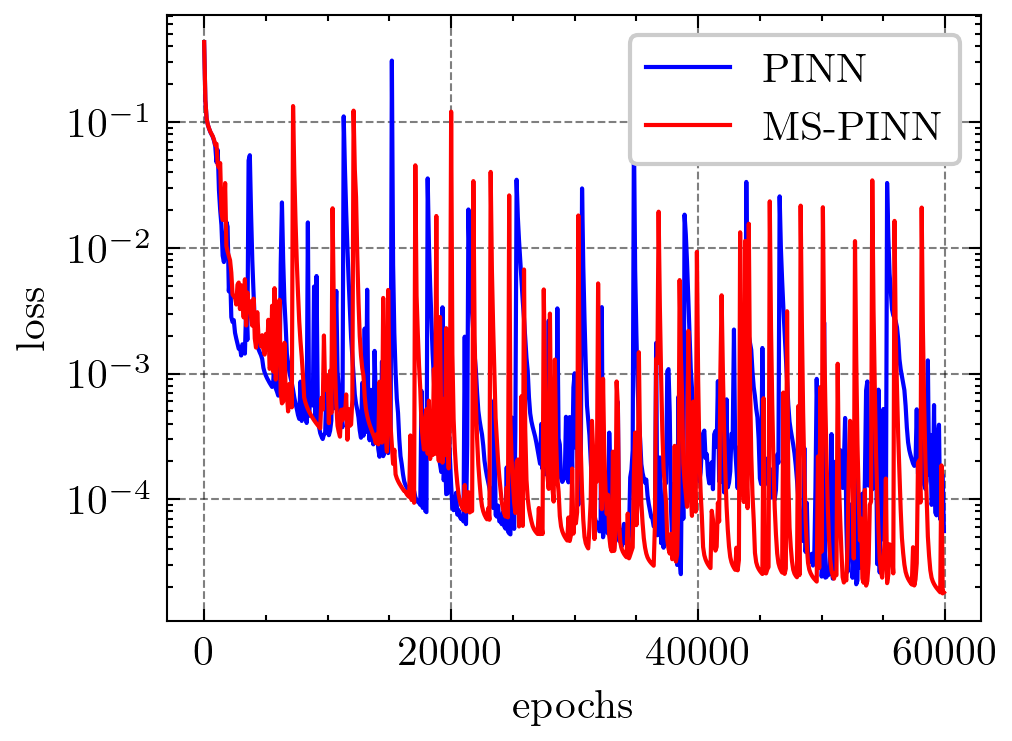}}
\caption{(a) the distribution of new training points obtained by MMPDE-Net for forward Burgers equation \eqref{eq:1d_Burgers}; (b) the loss function with different training epochs.}
\label{fig:Newmesh_Burgers1D}
\end{figure}

\begin{figure}[htbp]
\centering
\subfloat[t=0.2]{\includegraphics[width = 0.24\textwidth]{./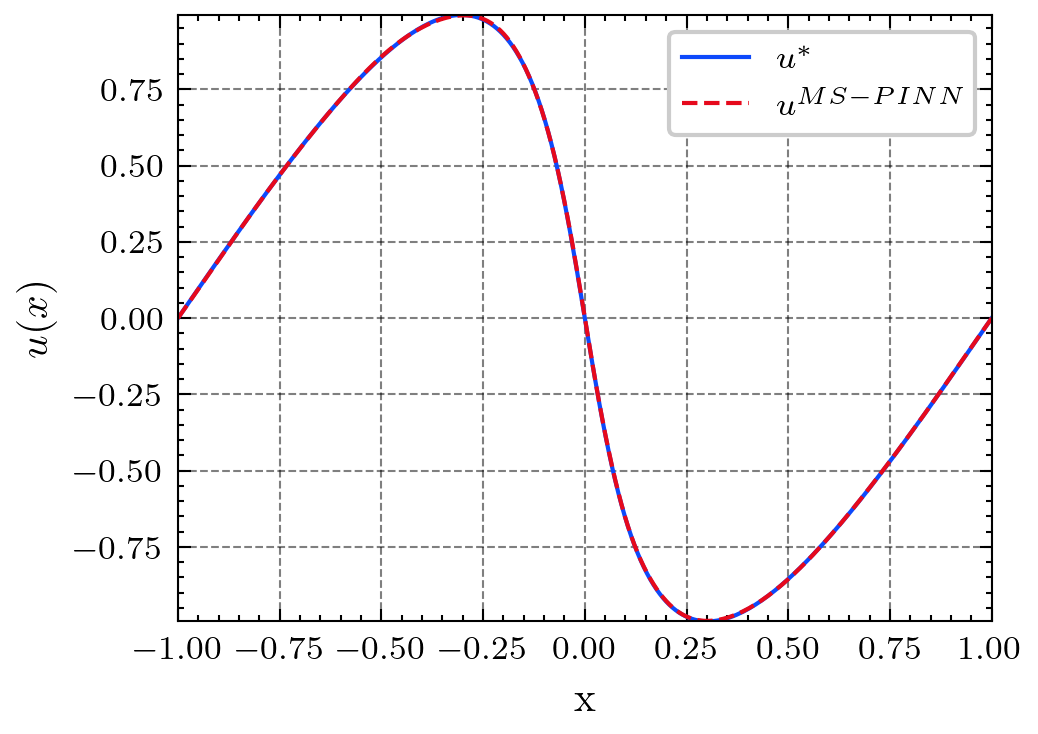}}
\subfloat[t=0.4]{\includegraphics[width = 0.24\textwidth]
{./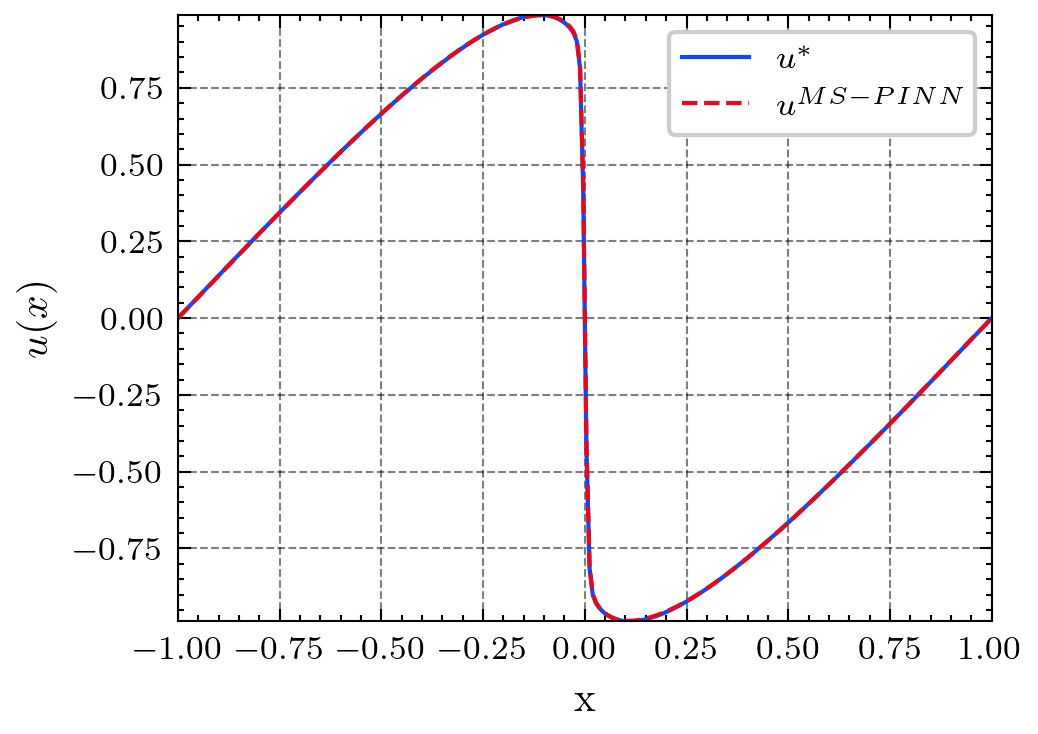}}
\subfloat[t=0.6]{\includegraphics[width = 0.24\textwidth]{./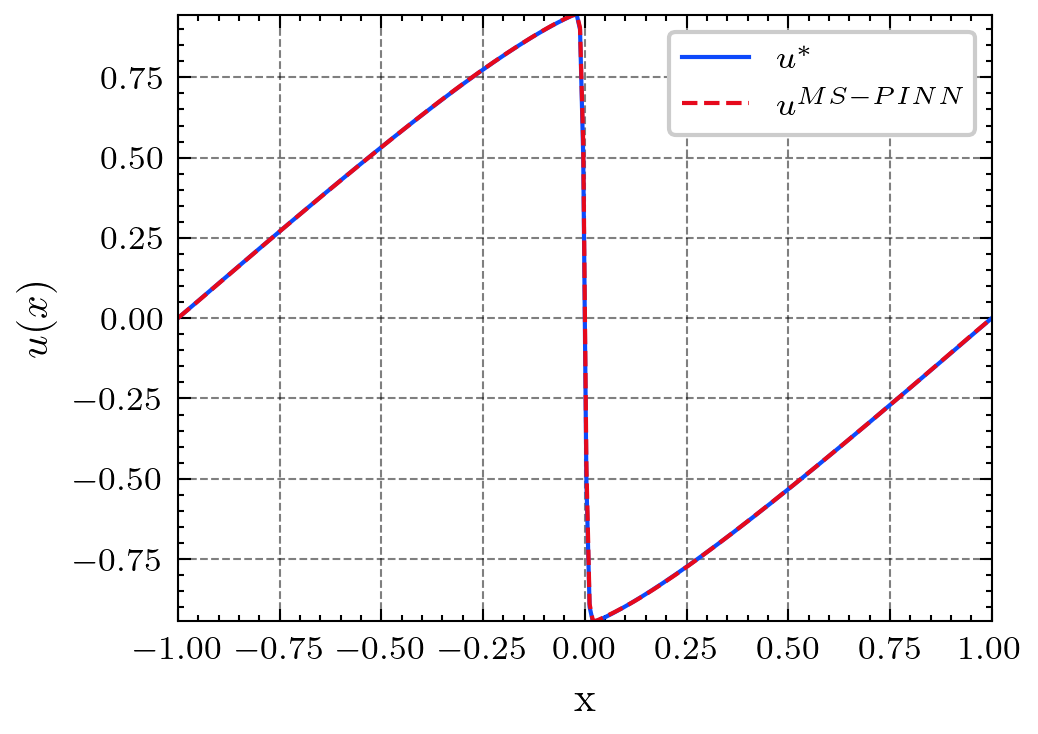}}
\subfloat[t=0.8]{\includegraphics[width = 0.24\textwidth]{./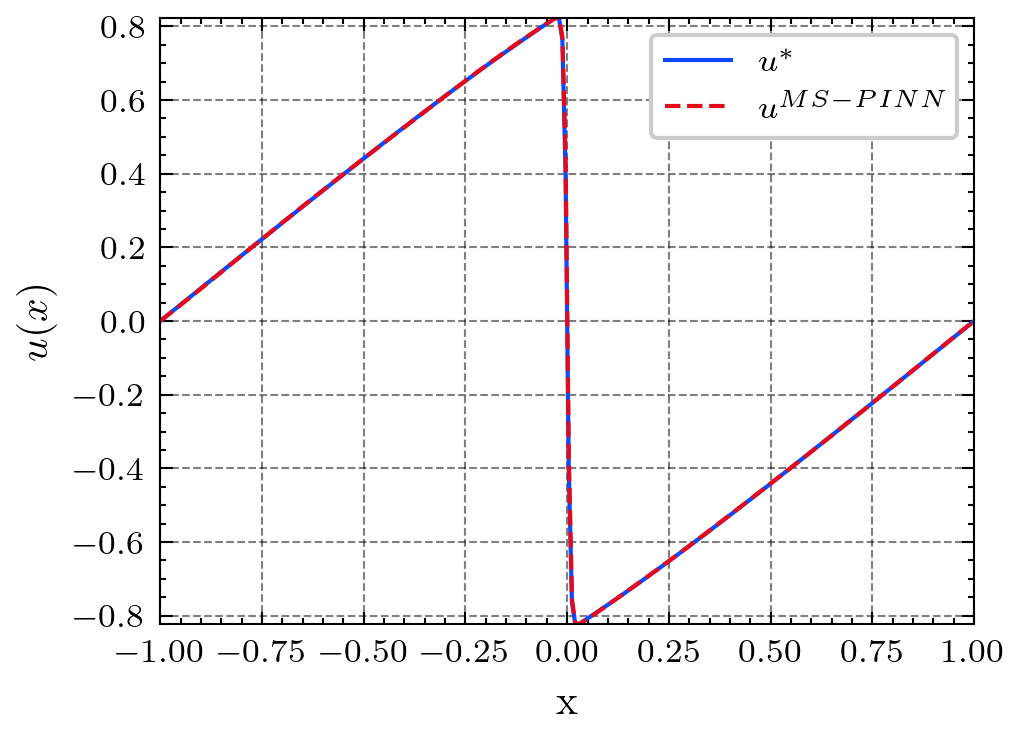}}
\caption{The results of MS-PINN at different times for the one-dimensional Burgers equation. (a) t=0.2; (b) t=0.4; (c) t=0.6; (d) t=0.8.}
\label{fig:MMPDEPINN_Burgers1D}
\end{figure}

The solution $u^{\text{PINN}}$ predicted by PINN  is shown in Fig \ref{fig:analyticsolution_Burgers1D}(a). MMPDE-Net is implemented and  the distribution of new training points is shown in Fig \ref{fig:Newmesh_Burgers1D}(a), where we only show 27 points for clear viewing. Fig \ref{fig:Newmesh_Burgers1D}(b) shows the variation of  the loss function for PINN and MS-PINN with different training epochs, where it is noticed that MS-PINN can attain lower loss values. The solution $u^{\text{MSPINN}}$ predicted by MS-PINN  is shown in Fig \ref{fig:analyticsolution_Burgers1D}(b) and the analytical solution is shown in Fig \ref{fig:analyticsolution_Burgers1D}(c). 
The comparison between the analytic solutions and the approximation solutions of MS-PINN at different time is shown in Fig \ref{fig:MMPDEPINN_Burgers1D}. The relative errors at different time obtained by PINN and MS-PINN are presented in Fig \ref{fig:Burgers_1D_Errorline}, which again shows that our method can achieve more accurate results.

\begin{figure}[htbp]
\centering
\subfloat[$e_\infty(u)$]{\includegraphics[width = 0.40\textwidth]{./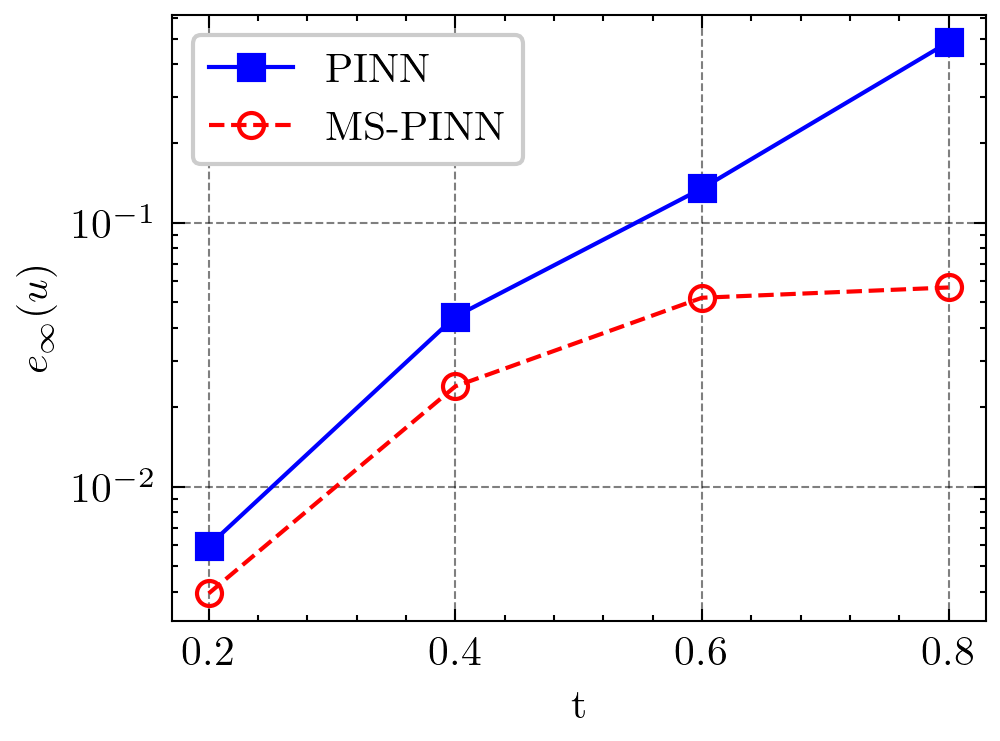}} \quad \quad \quad
\subfloat[$e_2(u)$]{\includegraphics[width = 0.40\textwidth]
{./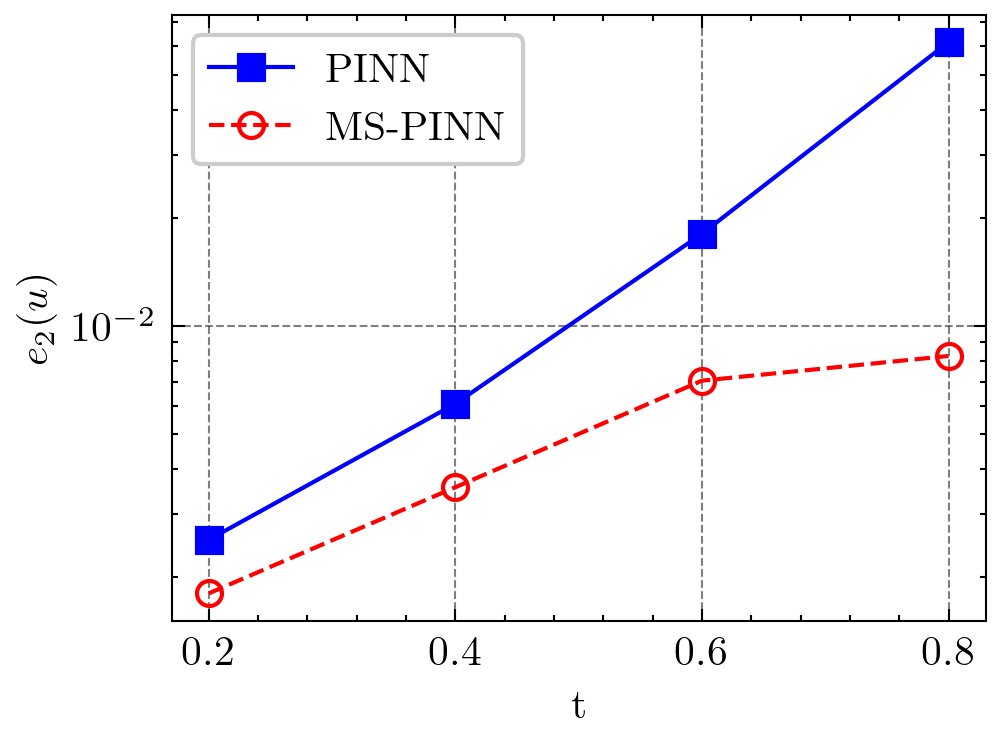}}
\caption{Comparison of the relative errors obtained by PINN and MS-PINN at different time.}
\label{fig:Burgers_1D_Errorline}
\end{figure}
\subsubsection{Inverse problem}
\label{sec:Burgers_1D_Inverse}
Consider the following one-dimensional Burgers equation \cite{PINN}
\begin{equation}
	\label{eq:1d_Burgers_Inverse}
	\hspace{-0.3cm}
	\begin{array}{r@{}l}
		\left\{
		\begin{aligned}
			& u_t+\lambda_1 uu_{x}-\lambda_2 u_{xx} = 0, \quad  x \in (-1,1), \quad t \in (0,1), \\
			& u(x,0) = -\sin(\pi x), \quad  x \in [-1,1], \\
            & u(-1,t) = u(1,t) =0, \quad  t \in [0,1], \\
		\end{aligned}
		\right.
	\end{array}
\end{equation}
where $\lambda_1$ and $\lambda_2$ are the unknown parameters.
\begin{figure}[htbp]
\centering
\subfloat[$u^{\text{PINN}}$]{\includegraphics[width = 0.33\textwidth]{./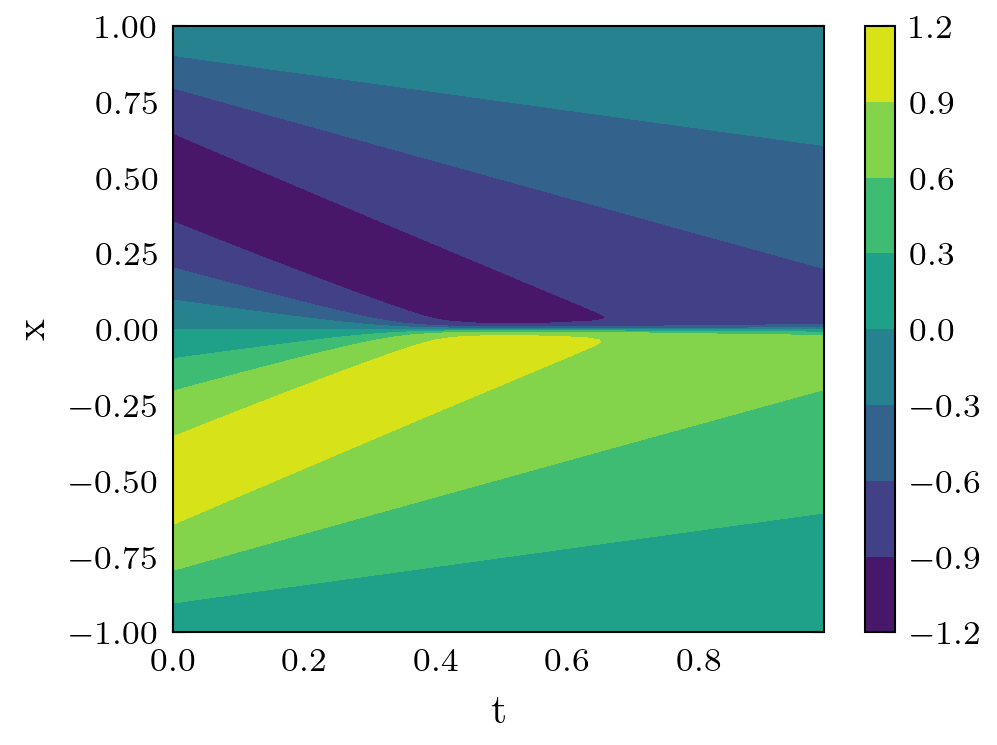}}
\subfloat[$u^{\text{MSPINN}}$]{\includegraphics[width = 0.33\textwidth]
{./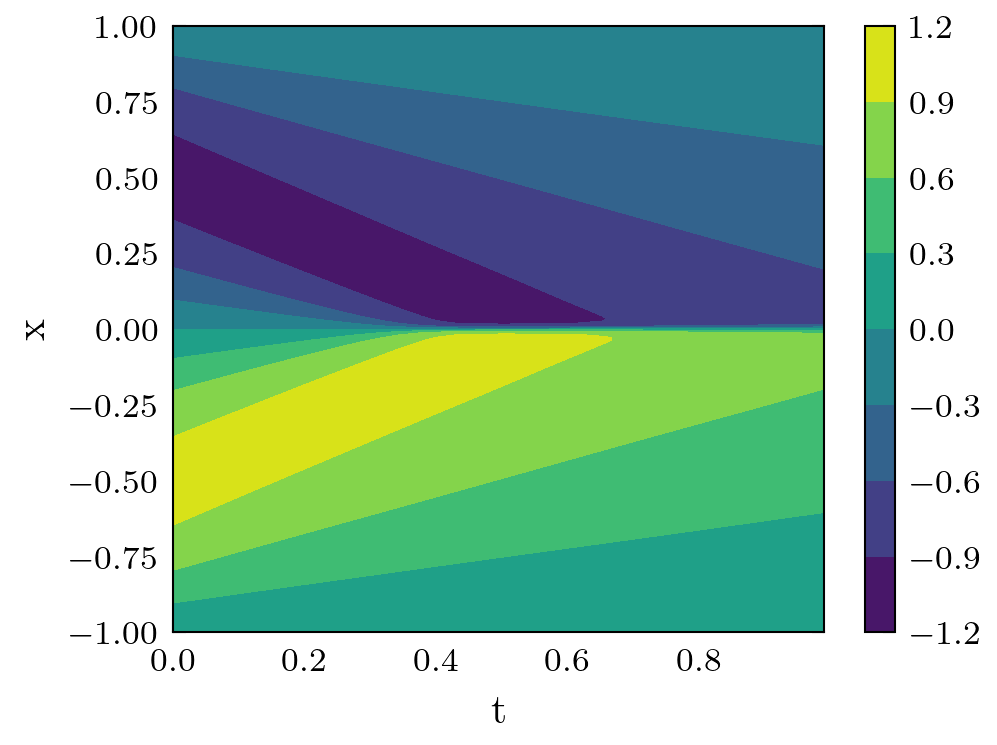}}
\subfloat[$u^{*}$]{\includegraphics[width = 0.33\textwidth]{./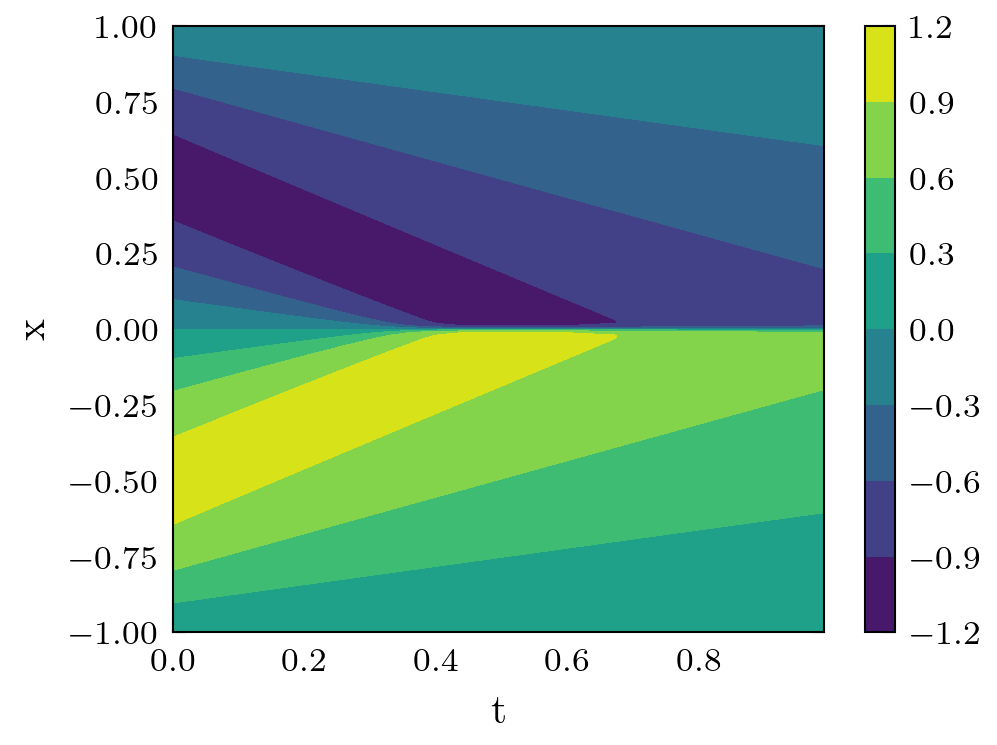}}
\caption{The solutions of the inverse problem for the one-dimensional Burgers equation. (a) the solution $u^{\text{PINN}}$predicted by PINN; (b) the solution $u^{\text{MSPINN}}$ predicted by MSPINN ; (c) the analytic solution $u^{*}$.}
\label{fig:analyticsolution_Burgers1D_Inverse}
\end{figure}
The monitor function that we use in MMPDE-Net is 

\begin{equation}
    \label{eq:1d_Bugers_Inverse_monitorfunction}
    \hspace{-0.3cm}
    \begin{array}{r@{}l}
        \begin{aligned}
            w = \sqrt{1+(0.5u_x)^2}.
        \end{aligned}
    \end{array}
\end{equation}

In $ [-1,1] \times [0,1]$, we sample $202\times 100$  points as the training set and $256 \times 100$ points as the test set. In addition, the analytical solution corresponding to $\lambda_1^* = 1$ and  $\lambda_2^* = \frac{0.01}{\pi}$ is extracted for data-driven values at the uniformly sampled 200 points. The main setting parameters are shown in Table  \ref{tab:parameter-MMPDENet}.
For the MS-PINN, we train PINN 70000 epochs and  MMPDE-Net 20000 epochs, where 20000 epochs in the pre-training of PINN  and 50000 epochs in the formal PINN training.  It is worth noting that the LBFGS method is used for fine-tuning in the last 10000 training epochs.
We compare the numerical results of MS-PINN  with the results obtained by PINN trained 70000 epochs to show the effectiveness of our method.

\begin{figure}[htbp]
\centering
\subfloat[new distribution of sampling points]{\includegraphics[width = 0.40\textwidth]{./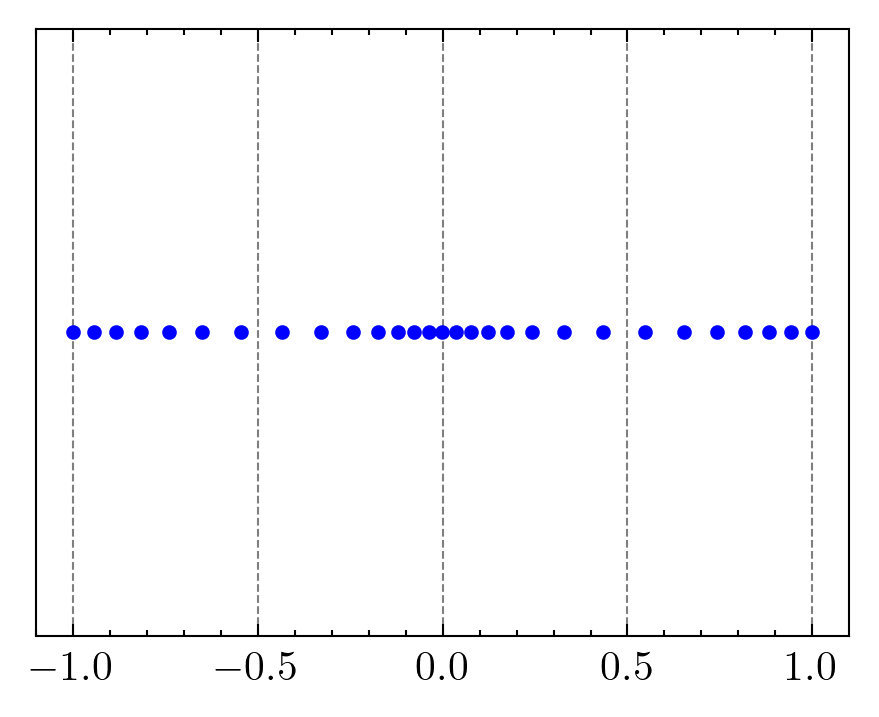}}
\subfloat[performance of the loss function]{\includegraphics[width = 0.40\textwidth]
{./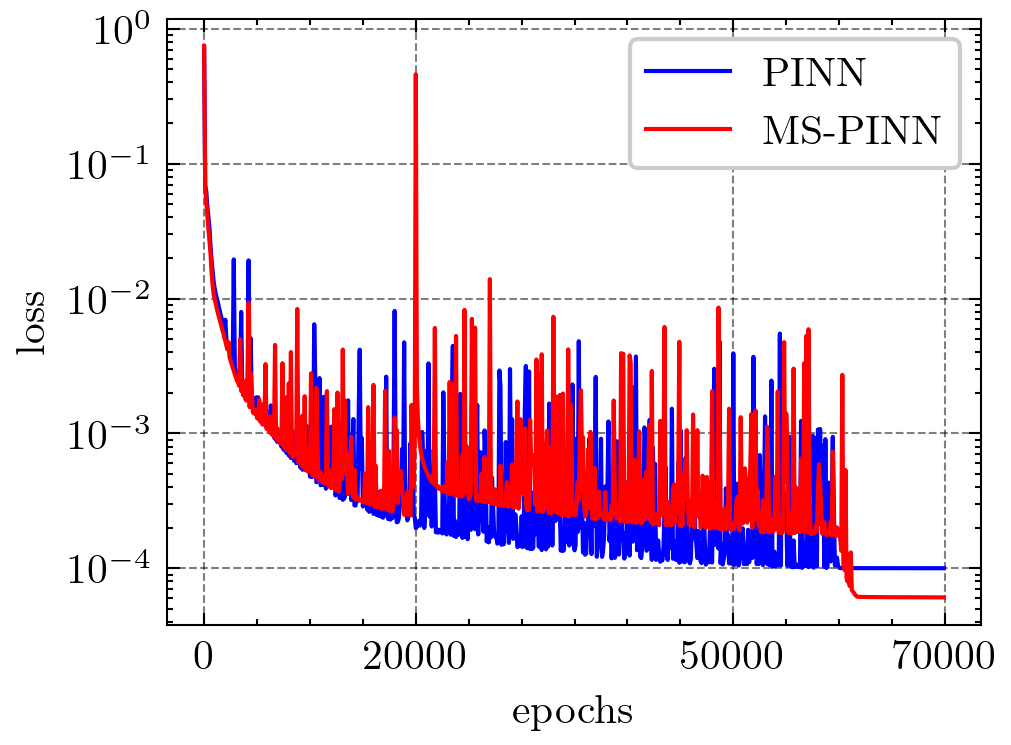}}
\caption{(a) the distribution of new training points obtained by MMPDE-Net for inverse Burgers equation \eqref{eq:1d_Burgers_Inverse}; (b) the performance of loss function with different training epochs.}
\label{fig:Newmesh_Burgers1DR}
\end{figure}

\begin{figure}[htbp]
\centering
\subfloat[t=0.2]{\includegraphics[width = 0.24\textwidth]{./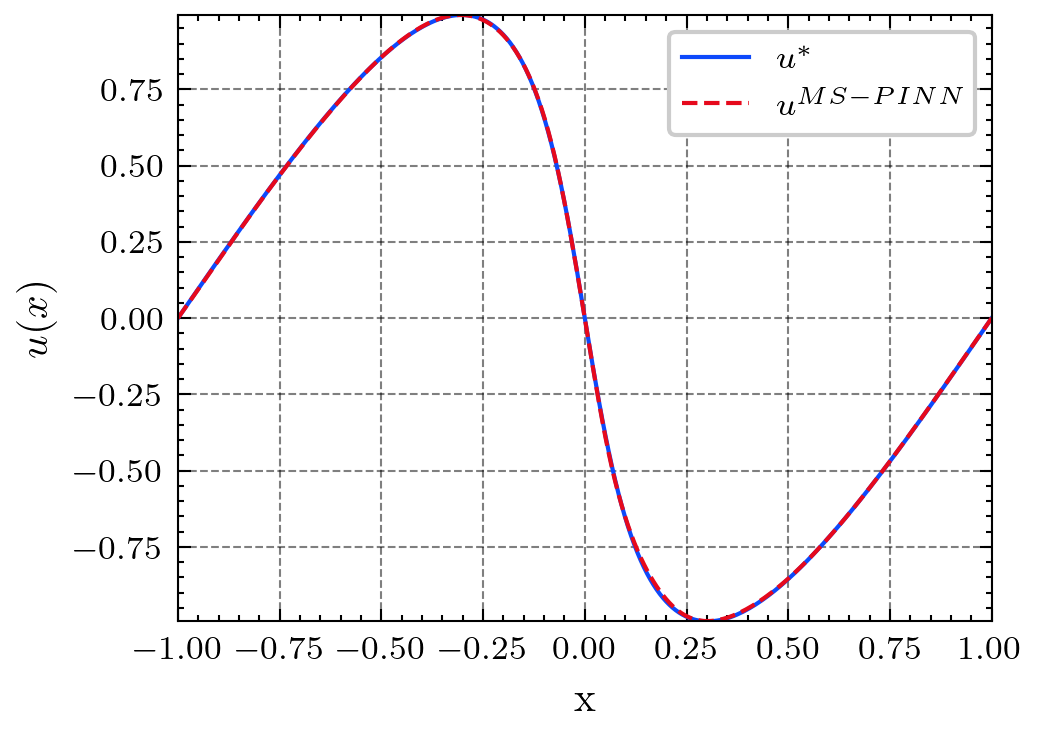}}
\subfloat[t=0.4]{\includegraphics[width = 0.24\textwidth]
{./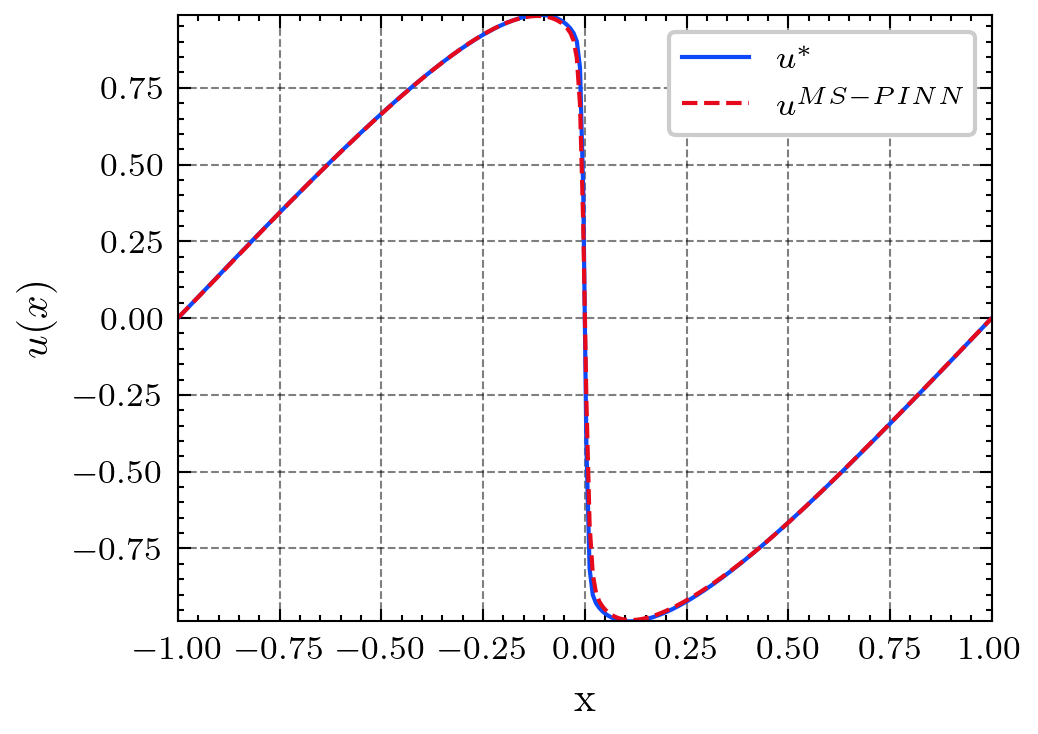}}
\subfloat[t=0.6]{\includegraphics[width = 0.24\textwidth]{./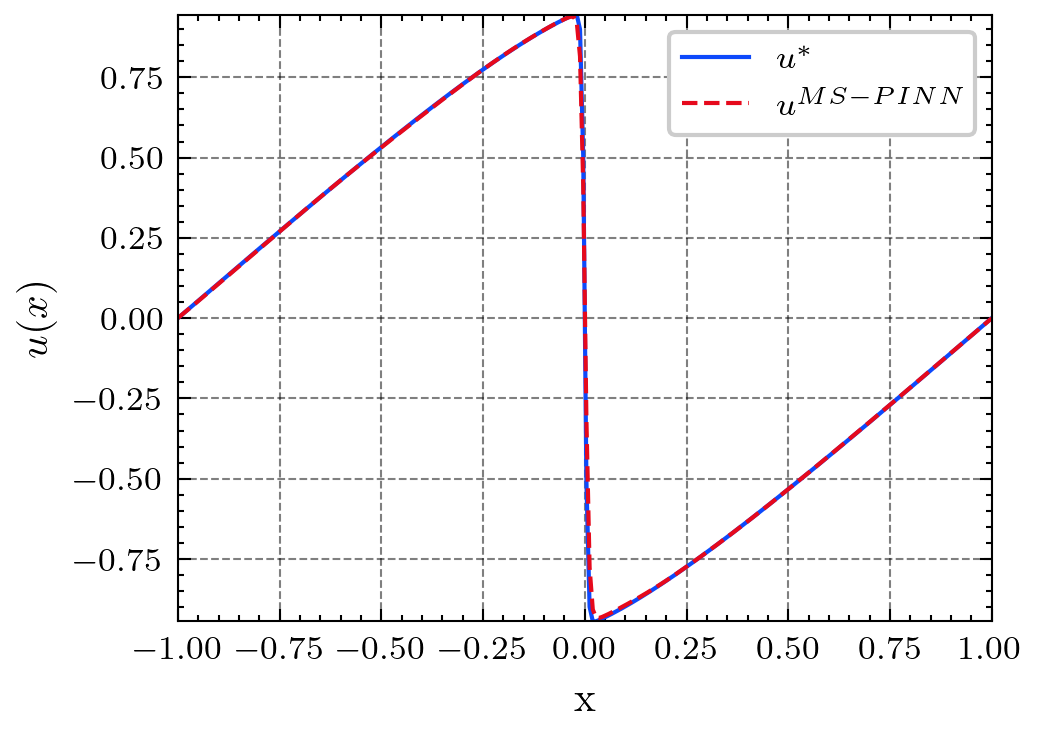}}
\subfloat[t=0.8]{\includegraphics[width = 0.24\textwidth]{./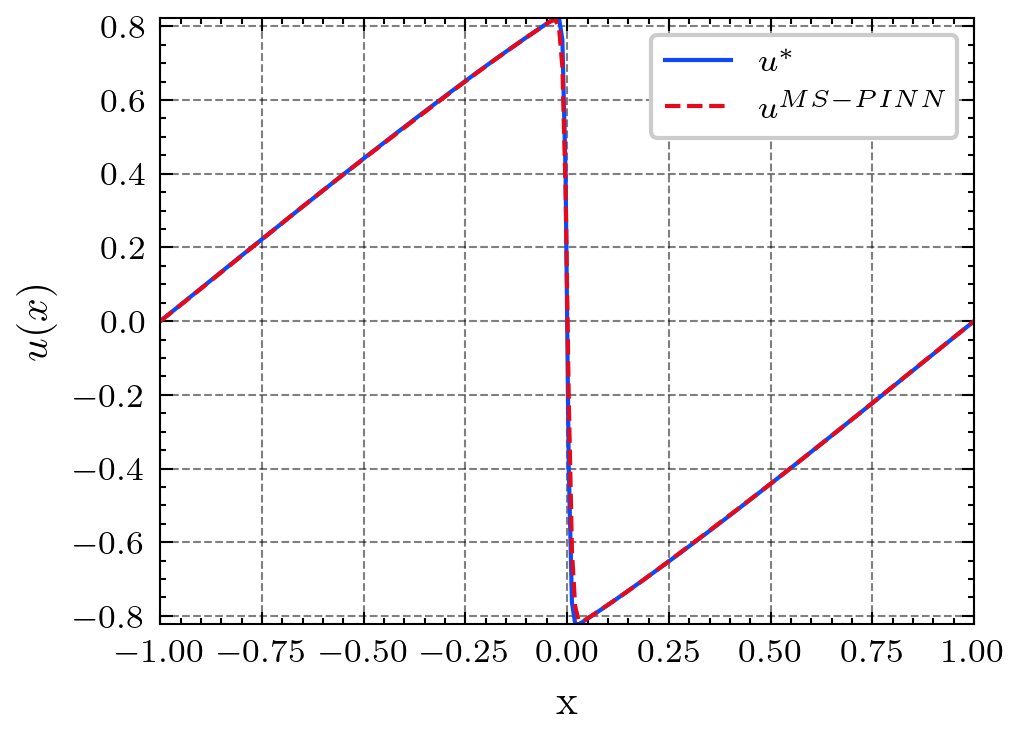}}
\caption{The results of MS-PINN at different times for the one-dimensional inverse Burgers equation. (a) t=0.2; (b) t=0.4; (c) t=0.6; (d) t=0.8.}
\label{fig:MMPDEPINN_Burgers1D_Inverse}
\end{figure}

\begin{table}[h]
\scriptsize
\centering
\caption{
Comparison of the absolute errors of PINN and MS-PINN on $\lambda_1$ and $\lambda_2$, where $\lambda^{\text{NN}}$ is denoted as the prediction of the neural networks. 
}
\setlength{\tabcolsep}{8.mm}{
\begin{tabular}{|c|c|c|}
\hline\noalign{\smallskip}
   absolute error&  PINN & MS-PINN  \\
\hline
$\vert \lambda_1^{\text{NN}}  -\lambda_1^* \vert$ & $ 2.932\times 10^{-3} $   &$4.379 \times 10^{-4}$ \\
\hline
$ \vert \lambda_2^{\text{NN}}- \lambda_2^* \vert $  & $2.629 \times 10^{-3}$  &   $1.427 \times 10^{-3}$ \\
\hline
\end{tabular}
}
\label{tab:Burgers1D_Inverse_LambdaResults} 
\end{table}
The solution $u^{\text{PINN}}$ predicted by PINN is shown in Fig \ref{fig:analyticsolution_Burgers1D_Inverse}(a). MMPDE-Net is implemented and  the distribution of new training points is shown in Fig \ref{fig:Newmesh_Burgers1DR}(a), where we use only 27 points for clear viewing. Fig \ref{fig:Newmesh_Burgers1DR}(b) shows the variation of the loss function for PINN and MS-PINN with different training epochs, where MS-PINN can attain lower loss values when using LBFGS to optimize. The solution $u^{\text{MSPINN}}$ predicted by MS-PINN  is shown in Fig \ref{fig:analyticsolution_Burgers1D_Inverse}(b) and the analytical solution is shown in Fig \ref{fig:analyticsolution_Burgers1D_Inverse}(c) .
The comparison between the analytic solutions and the approximation solutions of MS-PINN at different time is shown in Fig \ref{fig:MMPDEPINN_Burgers1D_Inverse}. The comparison of the absolute errors  of $\lambda_1$ and $\lambda_2$ obtained by PINN and MS-PINN is shown in Table \ref{tab:Burgers1D_Inverse_LambdaResults}. 
The relative errors at different time obtained by PINN and MS-PINN are presented in Fig \ref{fig:Burgers_1D_Inverse_Errorline}, which again demonstrates that our method can obtain more accurate results.

\begin{figure}[htbp]
\centering
\subfloat[$e_\infty(u)$]{\includegraphics[width = 0.40\textwidth]{./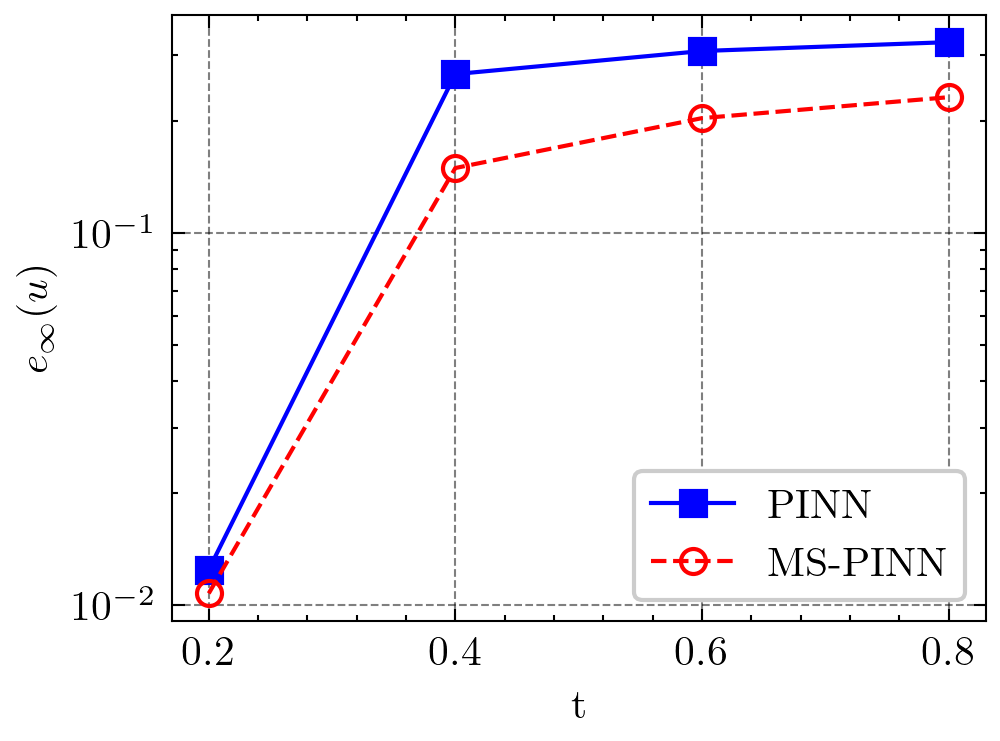}} \quad \quad \quad
\subfloat[$e_2(u)$]{\includegraphics[width = 0.40\textwidth]
{./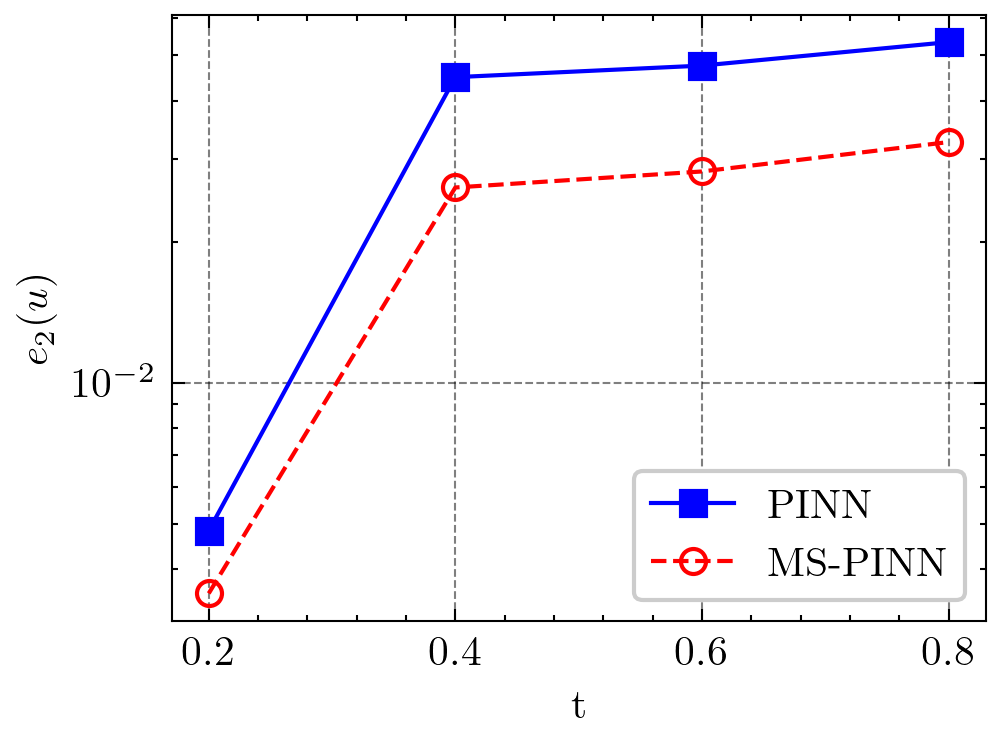}}
\caption{Comparison of the relative errors obtained by PINN and MS-PINN at different time.}
\label{fig:Burgers_1D_Inverse_Errorline}
\end{figure}

\subsection{Two-dimensional Burgers equation}
\label{sec:Burgers_2D}
Now consider the following two-dimensional Burgers equation \cite{Burgers2d_example}
\begin{equation}
	\label{eq:2d_Burgers}
	\hspace{-0.3cm}
	\begin{array}{r@{}l}
		\left\{
		\begin{aligned}
			& u_t +uu_x + uu_y = 0 \quad (x,y) \in [0,4]^2 , \quad t \in [0,\frac{1.5}{\pi}], \\
			& u(x,0,t) = u(x,4,t) \quad(x,t) \in [0,4]\times[0,\frac{1.5}{\pi}],\\
            & u(0,y,t) = u(4,y,t) \quad(y,t) \in [0,4]\times[0,\frac{1.5}{\pi}],\\
            & u(x,y,0) = \sin\left(\frac{\pi}{2}(x+y)\right), \quad (x,y) \in [0,4]^2.\\
		\end{aligned}
		\right.
	\end{array}
\end{equation}
The reference solutions at different time are given by method of characteristics and Newton's method, which are shown in Fig \ref{fig:Solution_Burgers2D}(d). 
At $t = \frac{1}{\pi}$, there will be two shock waves on $x + y = 2$ and $x + y = 6$. The monitor function we use in MMPDE-Net is 
\begin{equation}
    \label{eq:2d_Burgers_monitorfunction}
    \hspace{-0.3cm}
    \begin{array}{r@{}l}
        \begin{aligned}
             w = \sqrt{1+ 10u^2_x + 10 u^2_y}.
        \end{aligned}
    \end{array}
\end{equation}

\begin{figure}[htbp]
\centering
\subfloat[$u^{\text{MSPINN}}$ at $ t=\frac{0.375}{\pi}$]{\includegraphics[width = 0.24\textwidth]
{./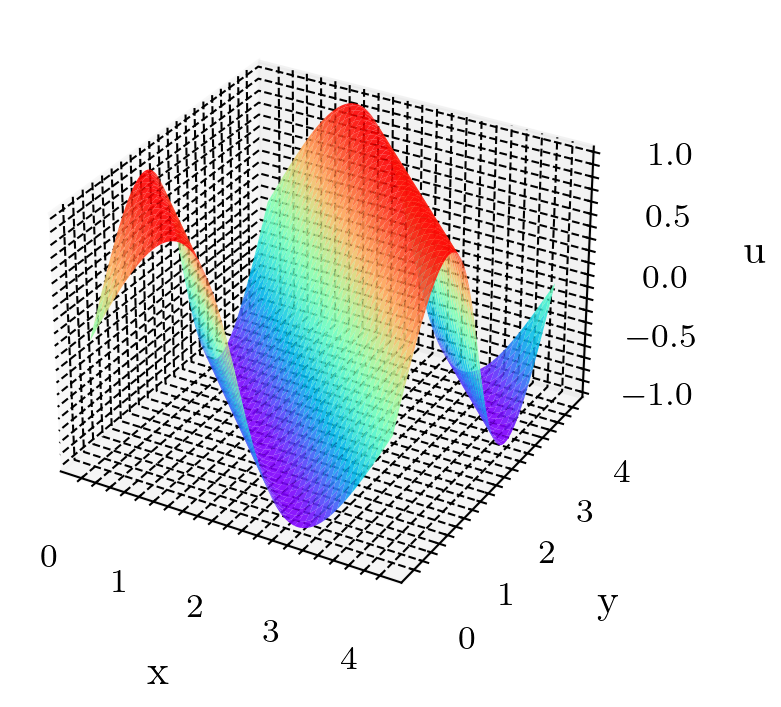}}
\subfloat[$u^*$ at $ t=\frac{0.375}{\pi}$]{\includegraphics[width = 0.24\textwidth]{./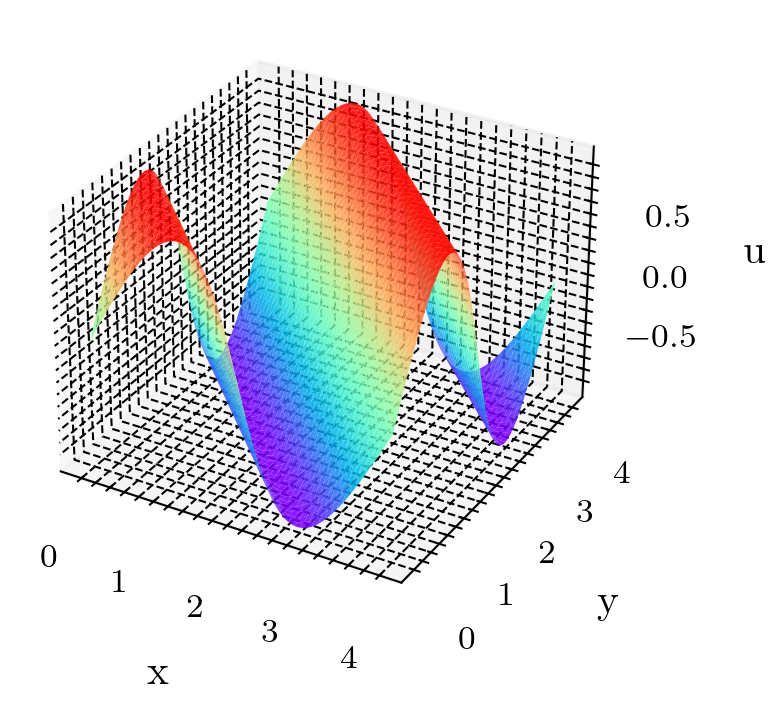}}
\subfloat[$u^{\text{MSPINN}}$ at $ t=\frac{1.5}{\pi}$]{\includegraphics[width = 0.24\textwidth]{./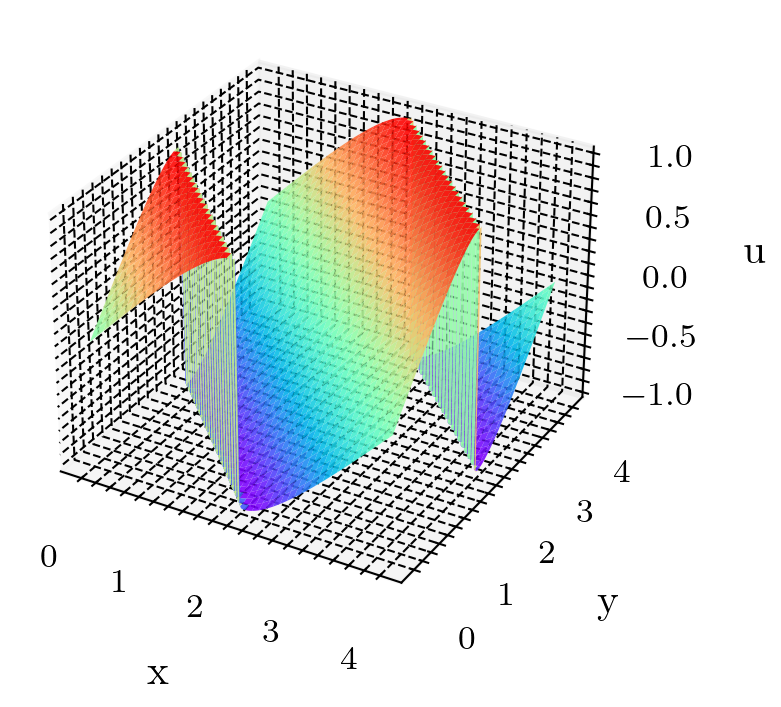}}
\subfloat[$u^*$ at $ t=\frac{1.5}{\pi}$]{\includegraphics[width = 0.24\textwidth]{./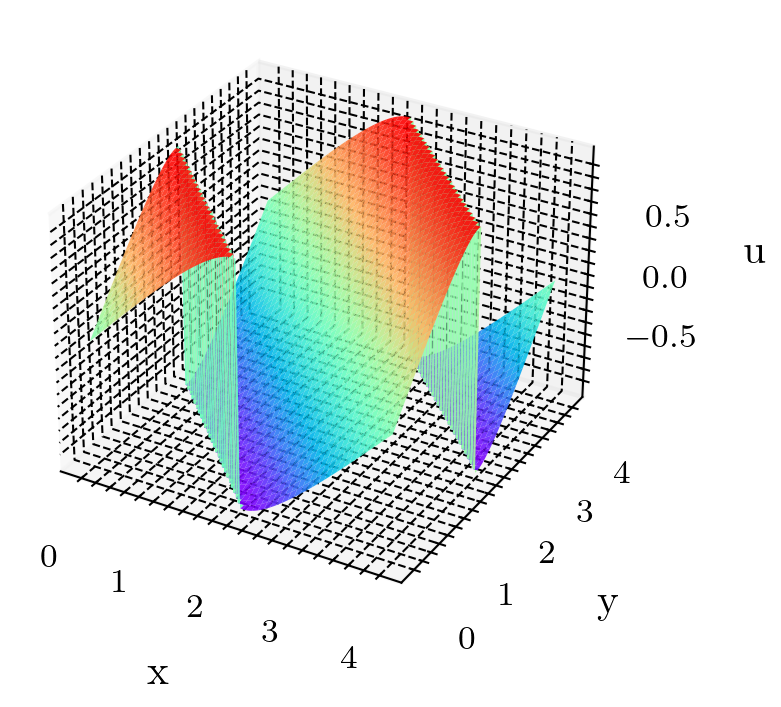}}
\caption{The solution predicted by MS-PINN and the reference solution for the two-dimensional Burgers equation at $t=\frac{0.375}{\pi}$ and $t=\frac{1.5}{\pi}$. }
\label{fig:Solution_Burgers2D}
\end{figure}

 We sample $100\times100 \times 50$ points in $(0,4)^2 \times [0,\frac{1.5}{\pi}]$ and 10000 on the boundary as the training set and $256\times256 \times 100$ points as the test set. The main setting parameters are shown in Table  \ref{tab:parameter-MMPDENet}.
For the MS-PINN, we train PINN 130000 epochs and  MMPDE-Net 40000 epochs, where 20000 epochs in the pre-training of PINN  and 110000 epochs in the formal PINN training. We compare the numerical results of MS-PINN with the results obtained by PINN which trained 130000 epochs to show the effectiveness of our method.
It is worth noting that during the training of the PINN, we force the initial value condition and use the LBFGS method for fine-tuning in the last 30000 training epochs.

\begin{figure}[htbp]
\centering
\subfloat[$t=\frac{0.375}{\pi}$]{\includegraphics[width = 0.24\textwidth]{./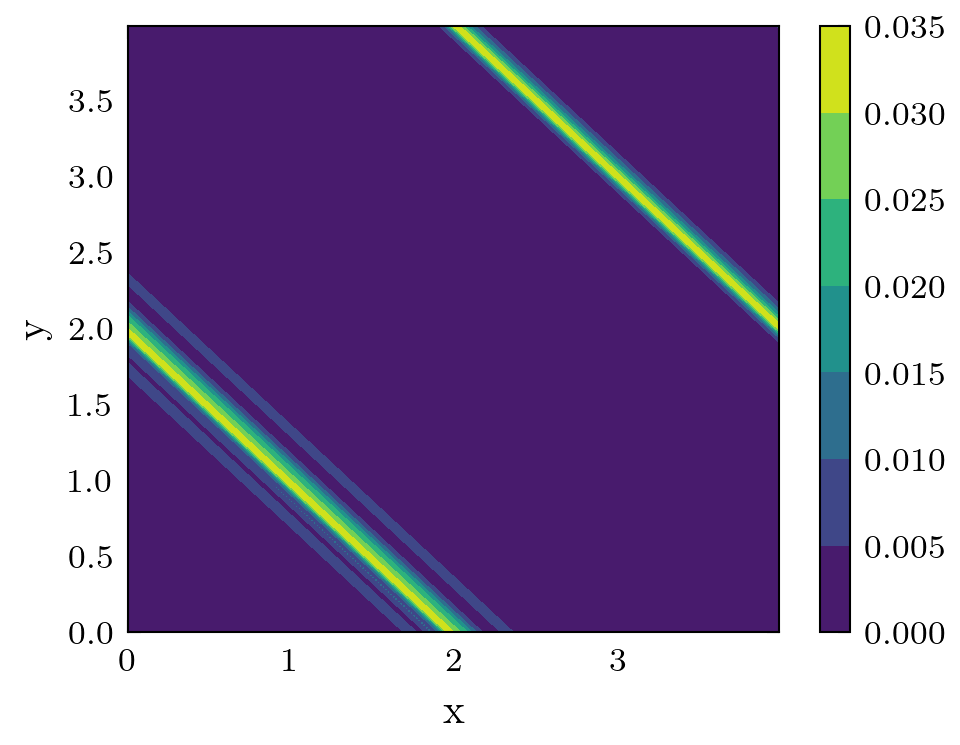}}
\subfloat[$t=\frac{0.75}{\pi}$]{\includegraphics[width = 0.24\textwidth]
{./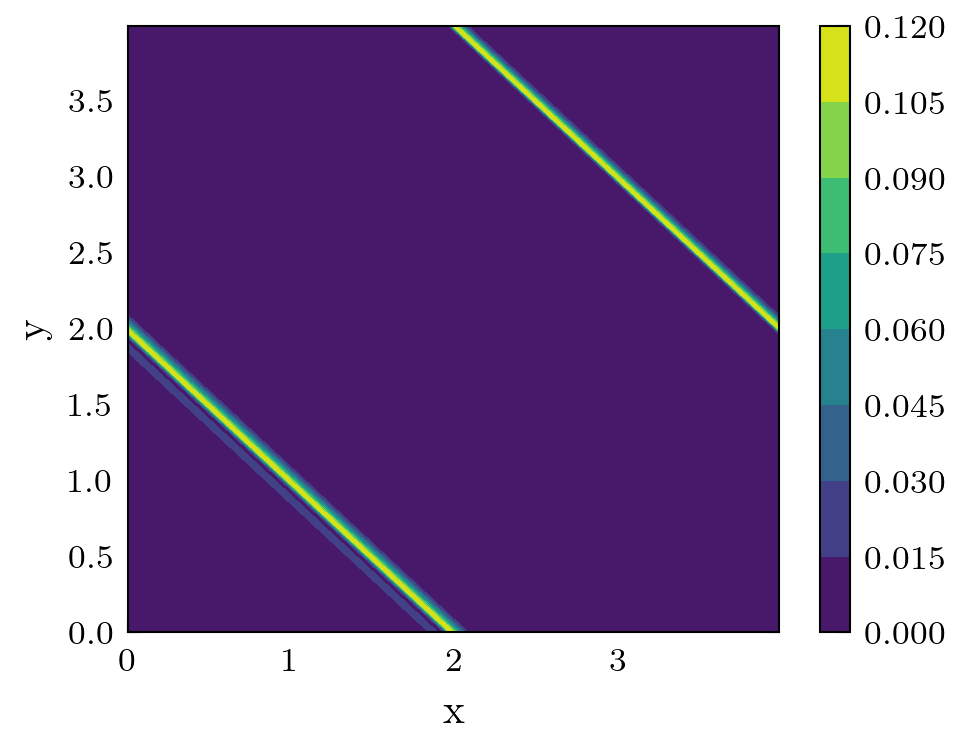}}
\subfloat[$t=\frac{1.125}{\pi}$]{\includegraphics[width = 0.24\textwidth]{./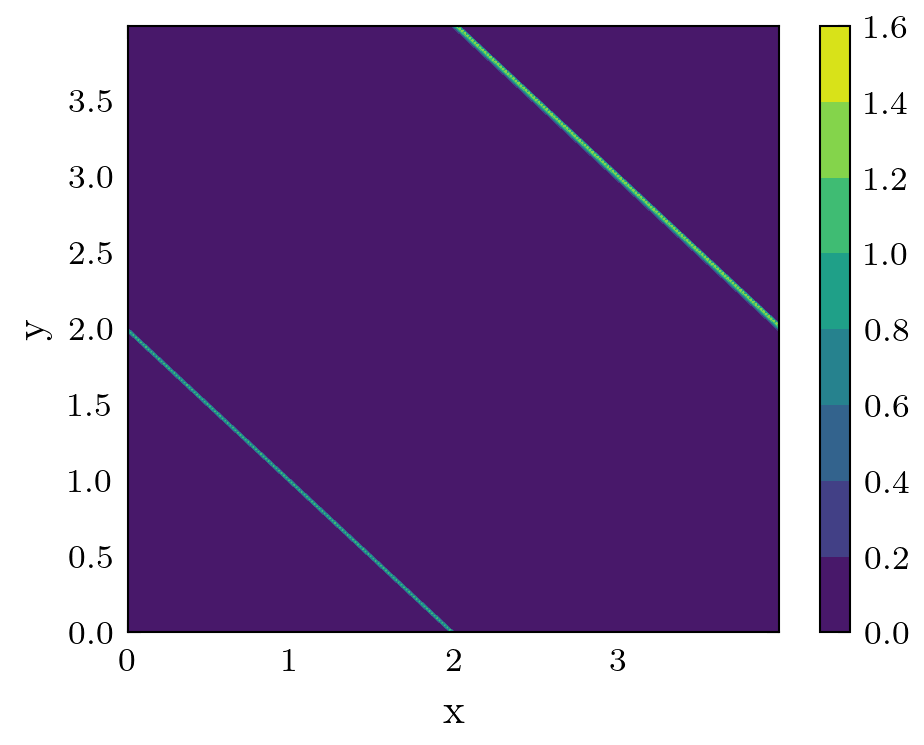}}
\subfloat[$t=\frac{1.5}{\pi}$]{\includegraphics[width = 0.24\textwidth]{./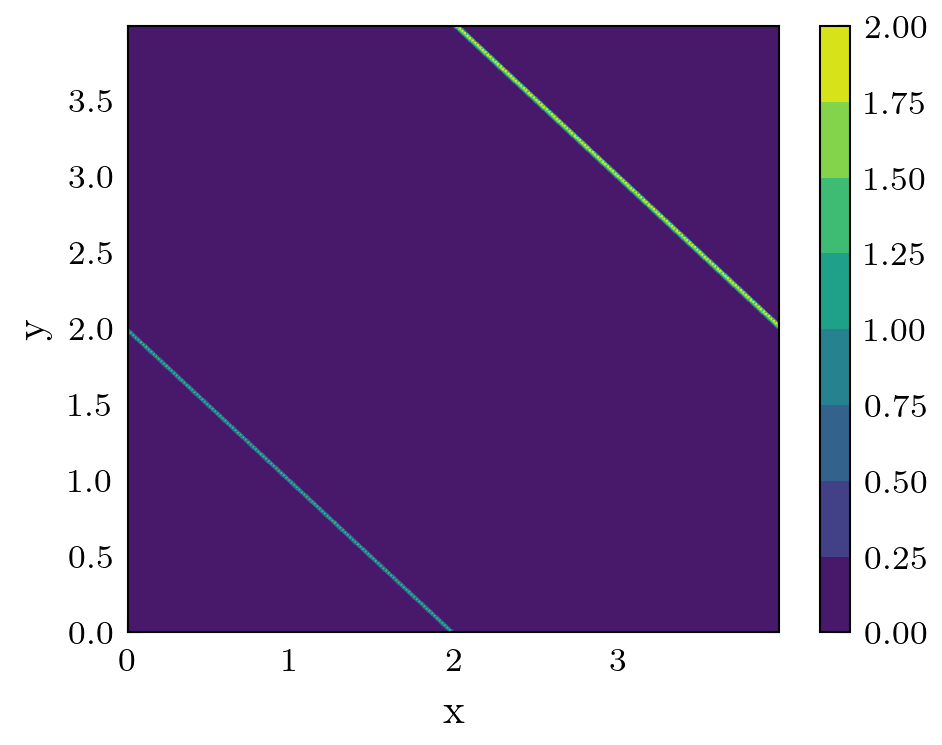}}
\caption{The absolute errors approximated by PINN at different time for the two-dimensional Burgers equation. (a) $t=\frac{0.375}{\pi}$ (b) $t=\frac{0.75}{\pi}$ (c) $t=\frac{1.125}{\pi}$ (d) $t=\frac{1.5}{\pi}$ .}
\label{fig:PINN_Burgers2D}
\end{figure}

\begin{figure}[htbp]
\centering
\subfloat[new distribution of sampling points]{\includegraphics[width = 0.35\textwidth]{./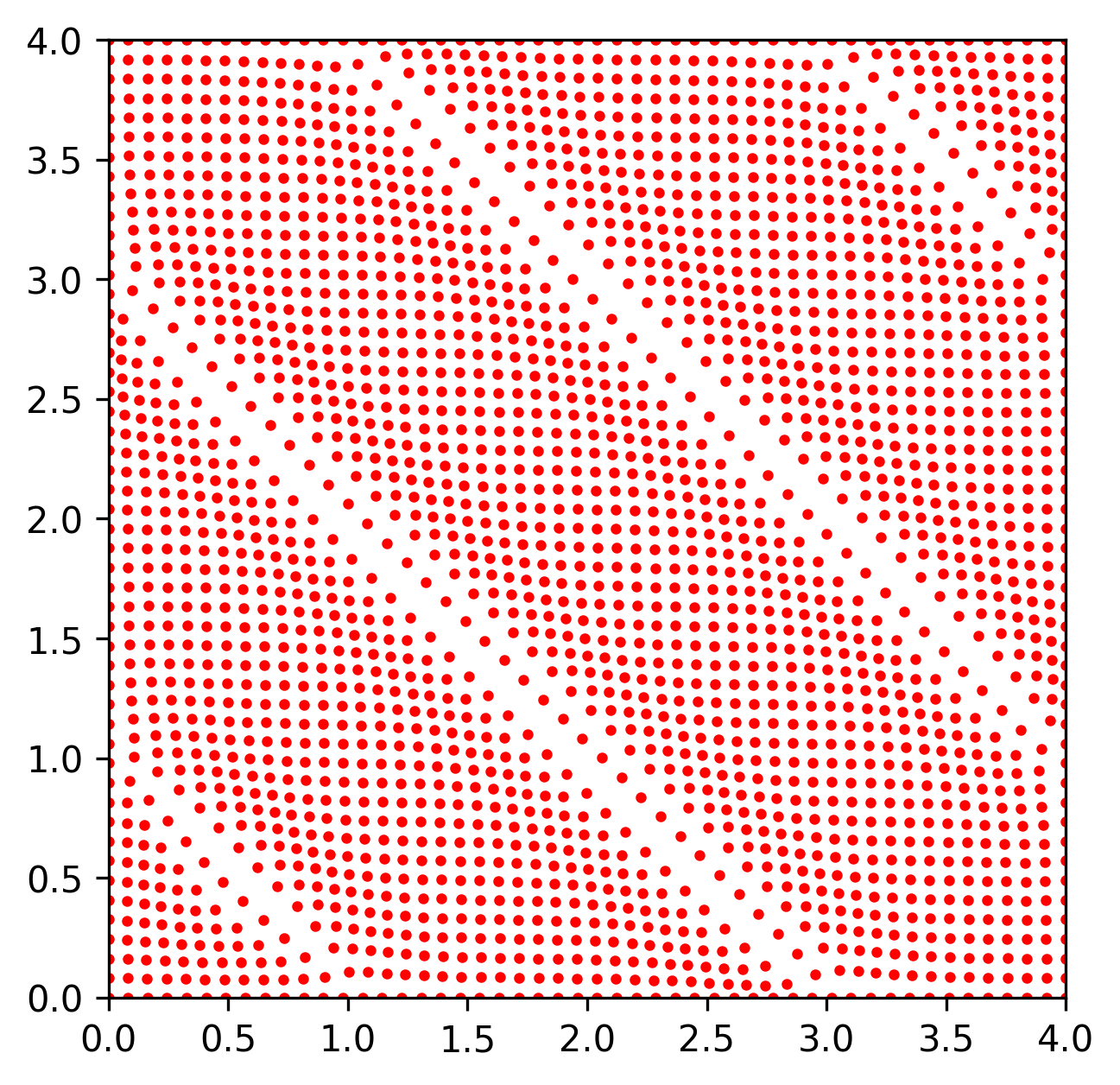}}
\subfloat[performance of the loss function]{\includegraphics[width = 0.43\textwidth]{./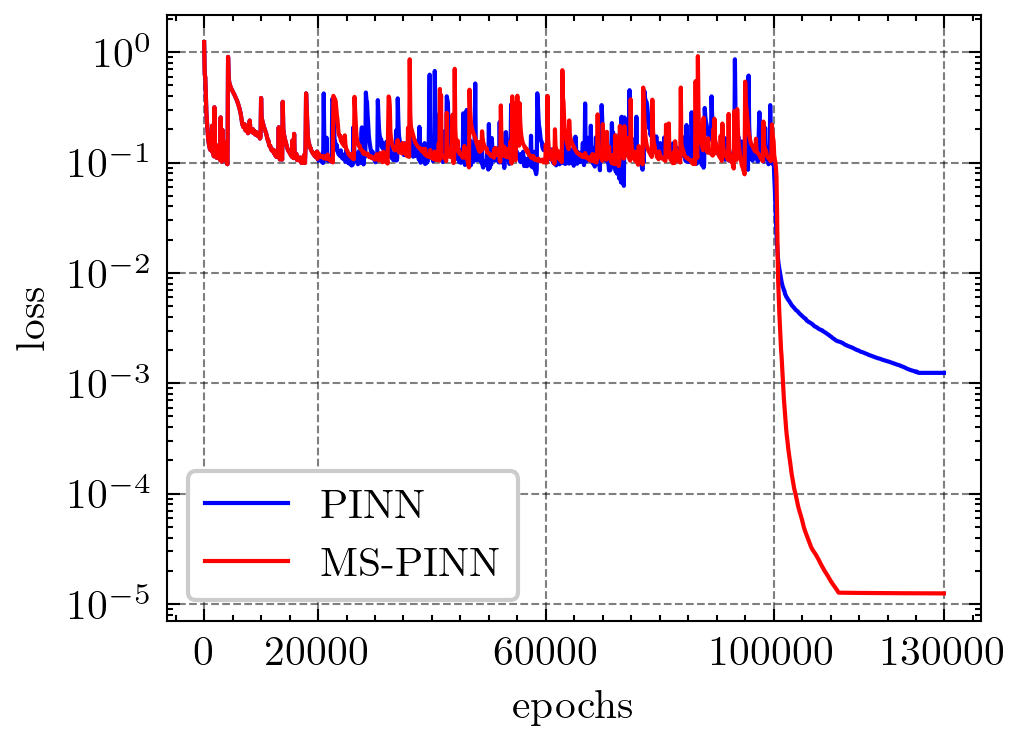}}
\caption{(a) the distribution of new training points obtained by MMPDE-Net; (b) the loss function with different training epochs.}
\label{fig:Newmesh_Burgers2D}
\end{figure}

\begin{figure}[htbp]
\centering
\subfloat[$t=\frac{0.375}{\pi}$]{\includegraphics[width = 0.24\textwidth]{./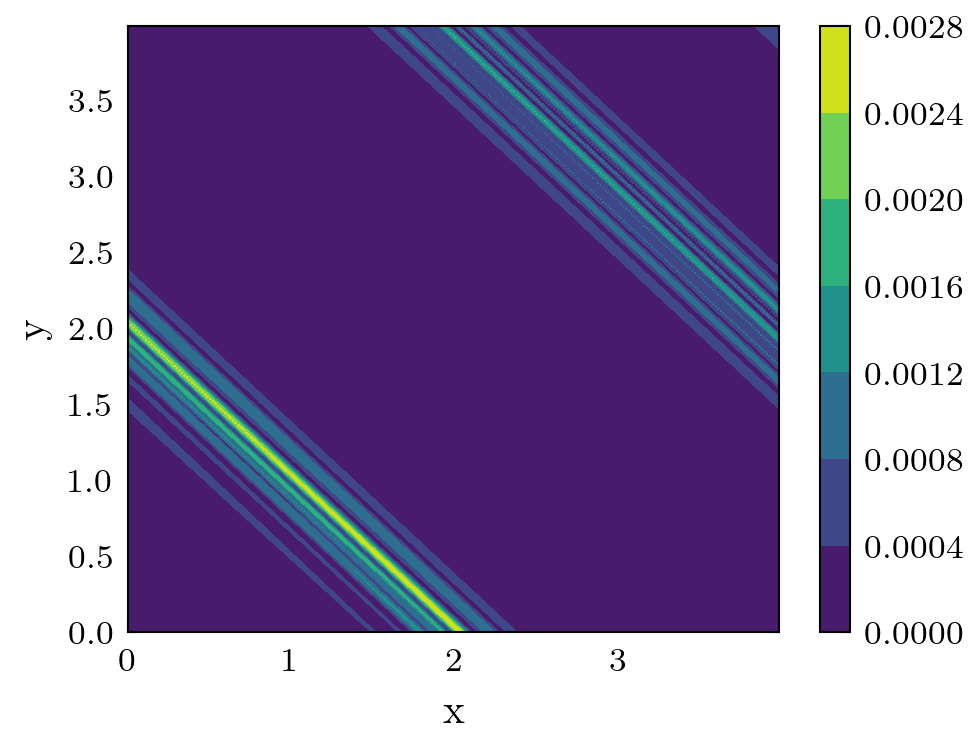}}
\subfloat[$t=\frac{0.75}{\pi}$]{\includegraphics[width = 0.24\textwidth]
{./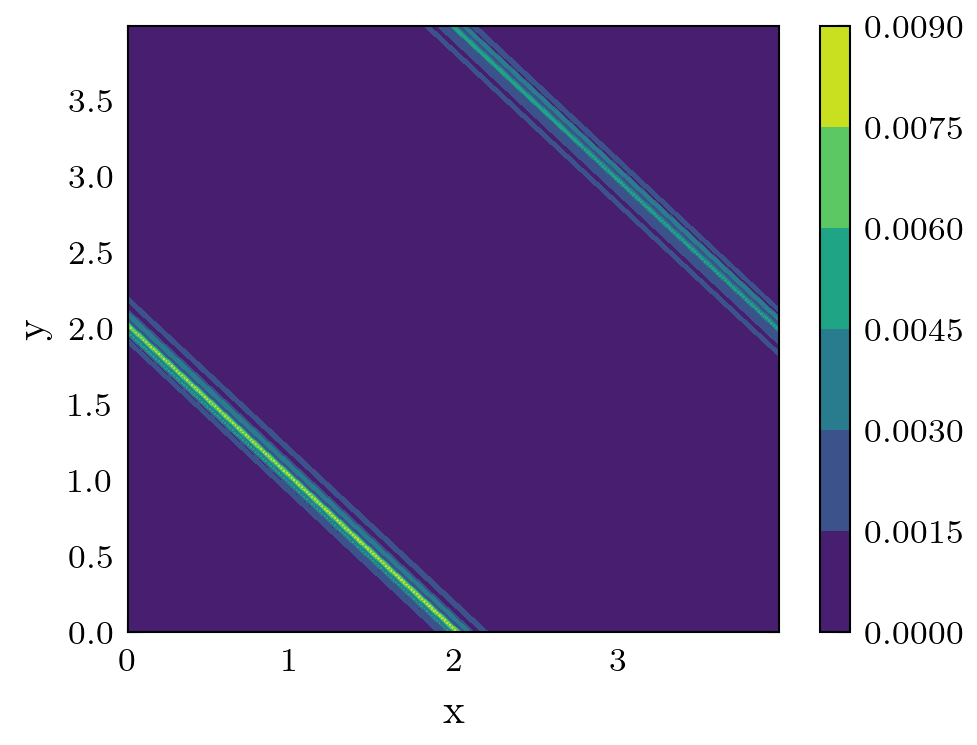}}
\subfloat[$t=\frac{1.125}{\pi}$]{\includegraphics[width = 0.24\textwidth]{./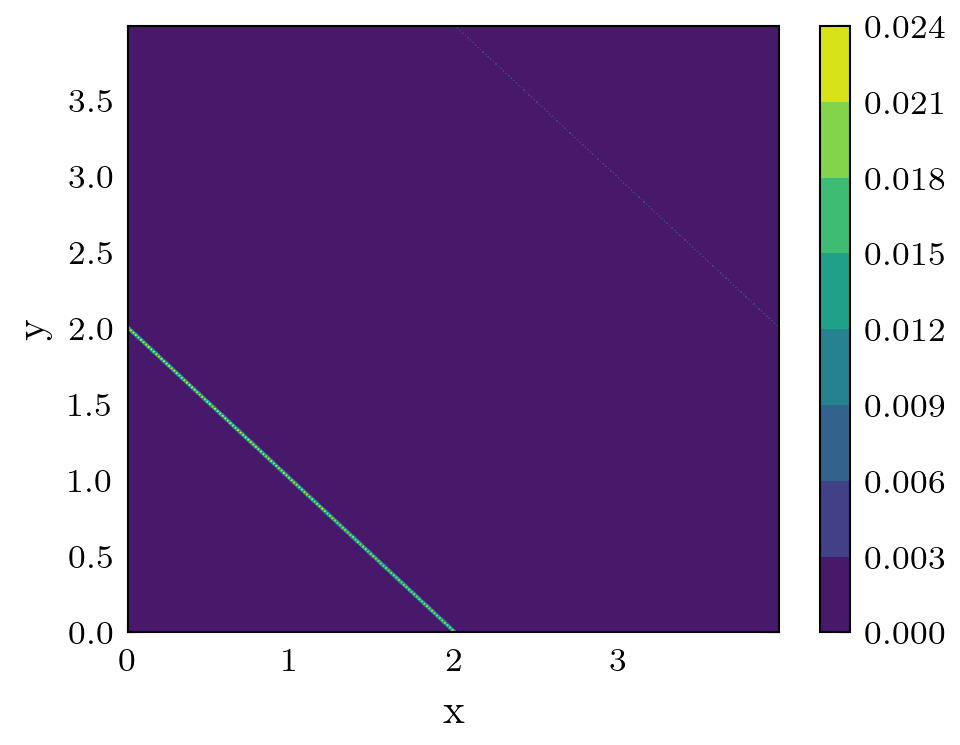}}
\subfloat[$t=\frac{1.5}{\pi}$]{\includegraphics[width = 0.24\textwidth]{./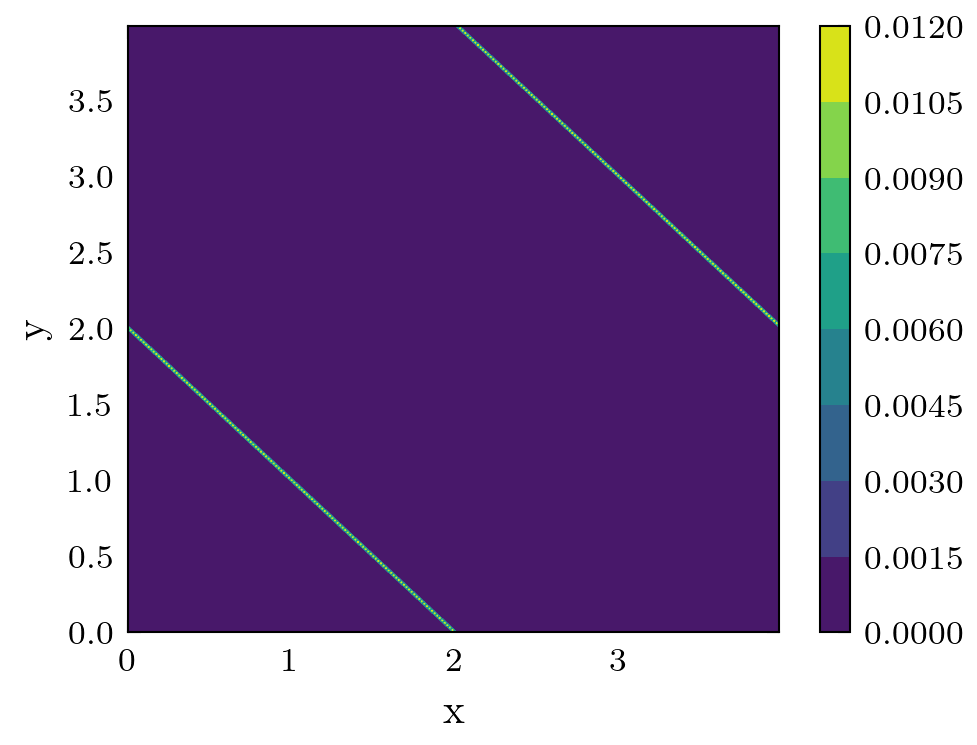}}
\caption{The absolute errors approximated by MS-PINN at different time for the two-dimensional Burgers equation. (a) $t=\frac{0.375}{\pi}$ (b) $t=\frac{0.75}{\pi}$ (c) $t=\frac{1.125}{\pi}$ (d) $t=\frac{1.5}{\pi}$.}
\label{fig:MMPDEPINN_Burgers2D}
\end{figure}

Fig \ref{fig:PINN_Burgers2D} shows the absolute error approximated by PINN at different time. Fig \ref{fig:Newmesh_Burgers2D}(a) shows the output sampling points, which is  performed at $t=0$ and we only show $50 \times 50$ points for clear viewing. Fig \ref{fig:Newmesh_Burgers2D}(b) shows the variation of the loss function for PINN and MS-PINN with different training epochs, where MS-PINN can obtain much lower loss values. Fig \ref{fig:MMPDEPINN_Burgers2D} shows  the absolute error approximated by MS-PINN at different time. For a more detailed view, the relative errors of the approximation solutions of PINN and MS-PINN at different time are shown in Fig \ref{fig:Burgers_2D_Errorline}. Our method can attain better results.

\begin{figure}[htbp]
\centering
\subfloat[$e_\infty(u)$]{\includegraphics[width = 0.35\textwidth]{./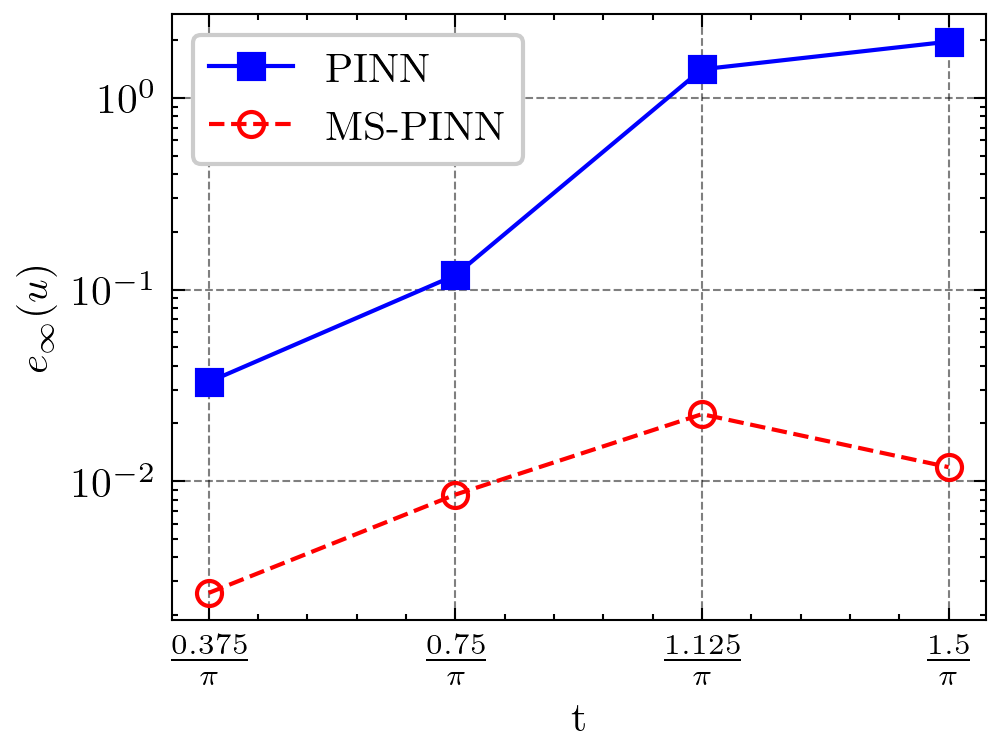}} \quad \quad \quad
\subfloat[$e_2(u)$]{\includegraphics[width = 0.35\textwidth]
{./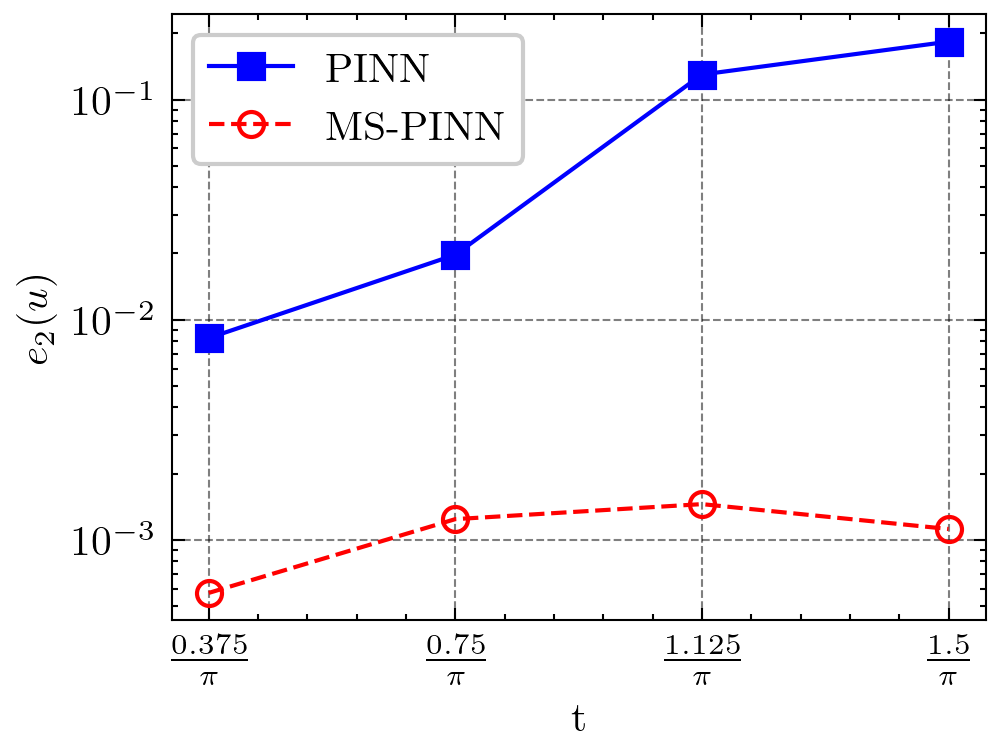}}
\caption{Comparison of the relative errors obtained by PINN and MS-PINN at different time.}
\label{fig:Burgers_2D_Errorline}
\end{figure}

\section{Conclusions}
\label{sec:conclusion}

In this work, unlike the usual sampling methods based on the loss function perspective, we propose the neural networks MMPDE-Net, which based on the moving mesh PDE method. It is an end-to-end, deep learning solver-independent framework which is designed to implement adaptive sampling. We also develop an iterative algorithm based on MMPDE-Net that is used to control the distribution of sampling points more precisely. The Moving Sampling PINN (MS-PINN) is proposed by combing MMPDE-Net and PINN. Some error estimates are given and sufficient numerical experiments are shown to demonstrate the effectiveness of our methods.

 In the future, we will work on how to develop MMPDE-Net in the case of high-dimensional PDEs,  how to combine MMPDE-Net with other deep learning solvers to improve the efficiency and exquisite convergence analysis of our MMPDE-Net.



\section*{Acknowledgment}

This research is partially sponsored by the National key R \& D Program of China (No.2022YFE03040002) and the National Natural Science Foundation of China (No.11971020, No.12371434). 

\section*{Data Availability}
 The datasets generated during and/or analysed during the current study are available from the corresponding author on reasonable request.

\textcolor{black}{The codes related MMPDE-Net are publicly available on GitHub at}

\textcolor{black}{https://github.com/YangYuSCU/MMPDE-Net.}

\newpage

\section*{Appendix}
\label{sec:appendix}

\renewcommand{\proofname}{\textbf{Proof of Theorem \ref{thm:1}}}

\begin{proof}
Using Definition \ref{de:F} and Assumption \ref{asu:F}, it is obtained that

\begin{equation} \label{eq:thm1_r_mr_u}
    \begin{aligned}
    \Vert  r(\bx;\hat{\theta}) \Vert_{2,U,M_{r}}^2 
    &= \mathcal{F}\left(k_1,\frac{b_1}{2} M_r,M_r\right) + \mathcal{F}\left(k_2,\frac{b_2}{2} M_r,M_r\right) \\
    & = \frac{1}{b_1^q}\left(\frac{k_1}{k_2}\right)^p \mathcal{F}\left(k_2,\frac{1}{2} M_r,M_r\right) + \frac{1}{b_2^q}\mathcal{F}\left(k_2,\frac{1}{2} M_r,M_r\right)\\
    & = \left[\frac{1}{b_1^q}\left(\frac{k_1}{k_2}\right)^p+\frac{1}{b_2^q}\right] \mathcal{F}\left(k_2,\frac{1}{2} M_r,M_r\right).
    \end{aligned}
\end{equation}

\begin{equation} \label{eq:thm1_r_mr_rho}
    \begin{aligned}
    \Vert  r(\bx;\hat{\theta}) \Vert_{2,\rho_{MM},M_{r}}^2 
    &= \mathcal{F}\left(k_1,\frac{a_1}{2}M_r,M_r\right) + \mathcal{F}\left(k_2,\frac{a_2}{2}M_r,M_r\right) \\
    & =\frac{1}{a_1^q} \left(\frac{k_1}{k_2}\right)^p \mathcal{F} \left(k_2,\frac{1}{2} M_r,M_r\right) + \frac{1}{a_2^q} \mathcal{F}\left(k_2,\frac{1}{2} M_r,M_r\right)\\
    & = \left[\frac{1}{a_1^q} \left(\frac{k_1}{k_2}\right)^p+\frac{1}{a_2^q}\right] \mathcal{F}\left(k_2,\frac{1}{2} M_r,M_r\right).
    \end{aligned}
\end{equation}
Since $\mathcal{F}\left(k_2,\frac{1}{2} M_r,M_r\right) > 0$, it is only necessary to prove that 
\begin{equation} \label{eq:thm1_a1a2_b1b2}
    \begin{aligned}
    \frac{1}{a_1^q} \left(\frac{k_1}{k_2}\right)^p+\frac{1}{a_2^q} \leq \frac{1}{b_1^q}\left(\frac{k_1}{k_2}\right)^p+\frac{1}{b_2^q}.
    \end{aligned}
\end{equation}

Since $\vert \Omega_1 \vert  > \vert \Omega_2 \vert$, we have $ 0< b_2 < 1 < b_1 < 2$. As we described in Section \ref{sec:MMPDE-Net}, the points output by MMPDE-Net will be concentrated in $\Omega_2$, due to $k_2 > k_1$. Therefore, we claim $0< a_1 < b_1< 2, \  0< b_2 < a_2 < 2$. Then, Eq \eqref{eq:thm1_a1a2_b1b2} can be written as 
\begin{equation} \label{eq:thm1_a1a2_b1b2_2}
    \begin{aligned}
    &\frac{1}{a_1^q} \left(\frac{k_1}{k_2}\right)^p+\frac{1}{a_2^q} \leq \frac{1}{b_1^q}\left(\frac{k_1}{k_2}\right)^p+\frac{1}{b_2^q}
    \iff & \frac{a_1^q \left(\frac{a_2^q}{b_2^q}-1\right)}{a_2^q \left(1-\frac{a_1^q}{b_1^q}\right)} \geq \left(\frac{k_1}{k_2}\right)^p.
    \end{aligned}
\end{equation}
We take $a_1$ as the independent variable and introduce a continuous function $h(x)$ on $(0,b_1)$
\begin{equation} \label{eq:thm1_con_fun_b1b2}
 h(x) = \frac{x^q\left(\frac{(2-x)^q}{b_2^q}-1\right)}{\left(2-x\right)^q\left(1-\frac{x^q}{b_1^q}\right)}. 
\end{equation}
We have 
\begin{equation} \label{eq:thm1_con_fun_lim_b1b2}
    \left\{
    \begin{aligned}
    h'(x) &  \geq 0, \\
    \lim\limits_{x\rightarrow 0} h(x) &= 0,\\
    \lim\limits_{x\rightarrow b_1} h(x) &= \left(\frac{b_1}{b_2}\right)^q.\\    
    \end{aligned}
    \right.
\end{equation}
Thus, $\exists \ x^* \in (0,b_1)$,  s.t. $h(x^*) \geq \left(\frac{k_1}{k_2}\right)^q$, since $\left(\frac{b_1}{b_2}\right)^q > 1 >\left(\frac{k_1}{k_2}\right)^q$.  Using Assumption \ref{asu:rho} and the description in Section \ref{sec:MMPDE-Net-Iterations}, choosing an appropriate monitor function and iterative process, the sampling points can be arbitrarily concentrated in $\Omega_2$, that is, the sampling points can be arbitrarily sparse in $\Omega_1$. Therefore, 
$a_1$ and $a_2$ satisfying Eq \eqref{eq:thm1_a1a2_b1b2} can be found by our algorithm.

\end{proof}
\bigskip



\renewcommand{\proofname}{\textbf{Proof of Theorem \ref{thm:2}}}
\begin{proof}
By  Assumption \ref{asu:Normrelations}, we have
\begin{equation}
     \begin{aligned}
     C_1 \Vert \bu^*(\bx)-\bu(\bx;\theta) \Vert_{2,\rho_{MM}}
     &\leq \Vert \mathcal{A}(\bu^*(\bx))-\mathcal{A}(\bu(\bx;\theta)) \Vert_{2,\rho_{MM},\Omega}  + \Vert \mathcal{B}(\bu^*(\bx))-\mathcal{B}(\bu(\bx;\theta)) \Vert_{2,\rho_{MM},\partial\Omega} \\
     &= \Vert \mathcal{A}(\bu(\bx;\theta)) - f \Vert_{2,\rho_{MM},\Omega} + \Vert \mathcal{B}(\bu(\bx;\theta)) - g \Vert_{2,\rho_{MM},\partial\Omega} \\
     &= \Vert r(\bx;\theta) \Vert_{2,\rho_{MM},\Omega} + \Vert b(\bx;\theta)\Vert_{2,\rho_{MM},\partial\Omega}\\
     &\leq  \sqrt{2}  \left(\Vert r(\bx;\theta) \Vert_{2,\rho_{MM},\Omega}^2 + 
     \Vert b(\bx;\theta)\Vert_{2,\rho_{MM},\partial\Omega}^2\right)^{\frac{1}{2}}.
     \end{aligned}     
\end{equation}
Using Lemma \ref{lem:Uniform laws of large numbers-Rademacher}, we have

\begin{equation}
     \Vert r(\bx;\theta) \Vert_{2,\rho_{MM},\Omega}^2 \leq \Vert  r(\bx;\theta) \Vert_{2,\rho_{MM},M_r} ^2+2\mathcal{R}_{M_r}(\mathcal{F}_r) + \delta,
\end{equation}
with probability at least $1-\exp\left(-\frac{M_r\delta^2}{2b^2}\right)$. Therefore,
\begin{equation}
    \begin{aligned}
     \Vert \bu^*(\bx)-\bu(\bx;\theta) \Vert_{2,\rho_{MM}} &\leq  \frac{\sqrt{2}}{C_1}(\Vert r(\bx;\theta) \Vert_{2,\rho_{MM},\Omega}^2 + 
     \Vert b(\bx;\theta)\Vert_{2,\rho_{MM},\partial\Omega}^2)^{\frac{1}{2}}\\
     &\leq \frac{\sqrt{2}}{C_1} \left(\Vert  r(\bx;\theta) \Vert_{2,\rho_{MM},M_r} ^2 + 
     \Vert b(\bx;\theta)\Vert_{2,\rho_{MM},\partial\Omega}^2 + 2\mathcal{R}_{M_r}(\mathcal{F}_r) + \delta \right)^{\frac{1}{2}},
     \end{aligned} 
\end{equation}
 with probability at least $1-\exp\left(-\frac{M_r\delta^2}{2b^2}\right)$.
\end{proof}
\bigskip


\renewcommand{\proofname}{\textbf{Proof of Corollary \ref{cor:1}}}
\begin{proof}
By  Theorem \ref{thm:1}, we have
\begin{equation}
  \Vert  r(\bx;\hat{\theta}) \Vert_{2,\rho_{MM},M_r} ^2 \leq \Vert  r(\bx;\hat{\theta}) \Vert_{2,U,M_r} ^2.
\end{equation}
Since only the distribution of points within $\Omega$ varies after training by MMPDE-Net and the distribution of points on $\partial\Omega$ is unchanged, we have

\begin{equation}
  \Vert  b(\bx;\hat{\theta}) \Vert_{2,\rho_{MM},\partial\Omega} ^2 = \Vert  b(\bx;\hat{\theta}) \Vert_{2,U,\partial\Omega} ^2.
\end{equation}
Therefore,
    \begin{equation*}
    \begin{aligned}
     \Vert \bu^*(\bx)-\bu(\bx;\hat{\theta}) \Vert_{2,\rho_{MM}}
     &\leq \frac{\sqrt{2}}{C_1} \left(\Vert  r(\bx;\hat{\theta}) \Vert_{2,\rho_{MM},M_r} ^2 + 
     \Vert b(\bx;\hat{\theta})\Vert_{2,\rho_{MM},\partial\Omega}^2 + 2\mathcal{R}_{M_r}(\mathcal{F}_r) + \delta \right)^{\frac{1}{2}} \\
     &\leq \frac{\sqrt{2}}{C_1} \left(\Vert  r(\bx;\hat{\theta}) \Vert_{2,U,M_r} ^2 + 
     \Vert b(\bx;\hat{\theta})\Vert_{2,U,\partial\Omega}^2 + 2\mathcal{R}_{M_r}(\mathcal{F}_r) + \delta \right)^{\frac{1}{2}},
     \end{aligned} 
    \end{equation*}
 with probability at least $1-\exp\left(-\frac{M_r\delta^2}{2b^2}\right)$.
\end{proof}
\vfill

\bibliographystyle{cas-model2-names}
\bibliography{MS-PINN}

\end{document}